\def\bB{{\bf B}}
\def\bG{{\bf G}}
\def\bH{{\bf H}}
\def\bJ{{\bf J}}
\def\bK{{\bf K}}
\def\bM{{\bf M}}
\def\bN{{\bf N}}
\def\bS{{\bf S}}
\def\bT{{\bf T}}
\def\bZ{{\bf Z}}
\def\aq{/  \kern-.25em / }
\def\g{\mathfrak{g}}
\def\cH{\mathcal{ H}}
\def\Z{\mathbb{ Z}}
\def\boxit#1{\vbox{\hrule\hbox{\vrule\kern3pt
          \vbox{\kern3pt#1\kern3pt}\kern3pt\vrule}\hrule}}
\begin{document}

\newtheorem{theorem}{Theorem}[subsection]
\newtheorem{lemma}[theorem]{Lemma}
\newtheorem{proposition}[theorem]{Proposition}
\newtheorem{fact}[theorem]{Fact}
\newtheorem{corollary}[theorem]{Corollary}

\theoremstyle{definition}
\newtheorem{definition}[theorem]{Definition}
\newtheorem{example}[theorem]{Example}
\newtheorem{xca}[theorem]{Exercise}

\theoremstyle{remark}
\newtheorem{remark}[theorem]{Remark}

\def\goth{\frak}

\def\GL{{\rm GL}}
\def\tr{{\rm tr}\, }
\def\A{{\Bbb A}}
\def\bs{\backslash}
\def\Q{{\Bbb Q}}
\def\R{{\Bbb R}}
\def\Z{{\Bbb Z}}
\def\C{{\Bbb C}}
\def\SL{{\rm SL}}
\def\cS{{\cal S}}
\def\cH{{\cal H}}
\def\G{{\Bbb G}}
\def\F{{\Bbb F}}
\def\cF{{\cal F}}

\def\cB{{\cal B}}
\def\cA{{\cal A}}
\def\cE{{\cal E}}

\newcommand{\oB}{{\overline{B}}}
\newcommand{\oN}{{\overline{N}}}

\def\CC{{\Bbb C}}
\def\ZZ{{\Bbb Z}}
\def\QQ{{\Bbb Q}}
\def\cS{{\cal S}}

\def\Ad{{\rm Ad}}

\def\bG{{\bf G}}
\def\bH{{\bf H}}
\def\bT{{\bf T}}
\def\bM{{\bf M}}
\def\bB{{\bf B}}
\def\bN{{\bf N}}
\def\bS{{\bf S}}
\def\bZ{{\bf Z}}
\def\t{\kern.1em {}^t\kern-.1em}
\def\cc#1{C_c^\infty(#1)}
\def\Fx{F^\times}
\def\half{\hbox{${1\over 2}$}}
\def\T{{\Bbb T}}
\def\Ox{{\frak O}^\times}
\def\Ex{E^\times}
\def\Ecl{{\cal E}}

\def\g{{\frak g}}
\def\h{{\frak h}}
\def\k{{\frak k}}
\def\ft{{\frak t}}
\def\n{{\frak n}}
\def\b{{\frak b}}

\def\2by2#1#2#3#4{\hbox{$\bigl( 
{#1\atop #3}{#2\atop #4}\bigr)$}}
\def\wh{\Xi}
\def\C{{\Bbb C}}
\def\bs{\backslash}
\def\adots{\mathinner{\mkern2mu
\raise1pt\hbox{.}\mkern2mu
\raise4pt\hbox{.}\mkern2mu
\raise7pt\hbox{.}\mkern1mu}}

\def\tim{\bf}
\def\timit{\it}
\def\timsm{\scriptstyle}
\def\secttt#1#2{\vskip.2in\noindent
{$\underline{\hbox{#1}}$\break (#2)}\vskip.2in}
\def\sectt#1{\vskip.2in\noindent
{$\underline{\hbox{#1}}$}\vskip.2in}
\def\sectm#1{\vskip.2in\noindent
{$\underline{#1}$}\vskip.2in}
\def\mat#1{\left[\matrix{#1}\right]}
\def\cc#1{C_c^\infty(#1)}
\def\ds{\displaystyle}
\def\Hom{\mathop{Hom}\nolimits}
\def\Ind{\mathop{Ind}\nolimits}
\def\bs{\backslash}
\def\ni{\noindent}
\def\eb{{\bf e}}
\def\fb{{\bf f}}
\def\hb{{\bf h}}
\def\rg{{\goth R}}
\def\ig{{\goth I}}
\def\ccl{{\cal C}}
\def\dcl{{\cal D}}
\def\ecl{{\cal E}}
\def\hcl{{\cal H}}
\def\ocl{{\cal O}}
\def\ncl{{\cal N}}
\def\ag{{\goth a}}
\def\bg{{\goth b}}
\def\cg{{\goth c}}
\def\og{{\goth O}}
\def\hg{{\goth h}}
\def\lg{{\goth l}}
\def\mg{{\goth m}}
\def\Og{{\goth O}}
\def\rg{{\goth r}}
\def\sg{{\goth s}}
\def\Sg{{\goth S}}
\def\tg{{\goth t}}
\def\zg{{\goth z}}
\def\C{{\Bbb C}}
\def\Q{{\Bbb Q}}
\def\R{{\Bbb R}}
\def\T{{\Bbb T}}
\def\Z{{\Bbb Z}}
\def\t{{}^t\kern-.1em}
\def\tr{\hbox{tr}}
\def\ad{{\rm ad}}
\def\Ad{{\rm Ad}}
\def\2by2#1#2#3#4{\hbox{$\bigl( {#1\atop #3}{#2\atop #4}\bigr)$}}

\def\Kcl{{\cal K}}
\def\Pcl{{\cal P}}
\def\Scl{{\cal S}}
\def\Vcl{{\cal V}}
\def\Wcl{{\cal W}}
\def\Ecl{{\cal E}}
\def\A{{\Bbb A}}
\def\F{{\Bbb F}}
\def\T{{\Bbb T}}
\def\go{{\goth O}}
\def\Ox{{\goth O}^\times}
\def\Fx{F^\times}
\def\Ex{E^\times}
\def\half{\hbox{${1\over 2}$}}
\def\vtwo{\vskip .2in}
\def\BibFH{{\bf [1]}}
\def\BibGod{{\bf [2]}}
\def\BibCrelle{{\bf [3]}}
\def\BibAmJ{{\bf [4]}}
\def\BibDuke{{\bf [5]}}
\def\BibJL{{\bf [6]}}
\def\BibRR{{\bf [7]}}

\def\BibFH{{\bf [1]}}
\def\BibGod{{\bf [2]}}
\def\BibCrelle{{\bf [3]}}
\def\BibAmJ{{\bf [4]}}
\def\BibDuke{{\bf [5]}}
\def\BibJL{{\bf [6]}}
\def\BibRR{{\bf [7]}}

\newcommand{\N}{\mathbb{N}}

\newcommand{\gen}{{\operatorname{gen}}}
\newcommand{\ind}{\operatorname{ind}}
\newcommand{\Wh}{\mathcal{W}}
\newcommand{\Kr}{\mathcal{K}}
\newcommand{\V}{\mathcal{V}}
\newcommand{\U}{\mathcal{U}}
\renewcommand{\O}{\mathcal{O}}
\newcommand{\lift}{\goth{g}}
\newcommand{\inv}{\iota}
\newcommand{\supp}{{\operatorname{Supp}}}
\def\cW{{\cal W}}

\title{Distinguished Regular Supercuspidal Representations and\\ Inductive Constructions of Representations}
\author{Jeffrey Hakim}
\date{\today}
\maketitle
\abstract{This paper develops the theory of distinguished regular supercuspidal representations, and it highlights how the correspondence between regular characters and regular supercuspidal representations resembles induction in certain ways.
}
\tableofcontents

\parskip=.13in

\section{Introduction}

The purpose of this paper is to develop the theory of distinguished representations for Kaletha's regular supercuspidal representations (defined in \cite{KalYu}).

Our main result (Theorem \ref{mainthm}) is a formula
$$\langle \Theta , \pi (\mu)\rangle_G^1 =\sum_{\zeta\in T\bs \Theta_{\bT}} {\rm m}_T^G(\zeta)\ \langle \zeta ,\mu \rangle_T^\varepsilon,$$
with notations as follows:
\begin{itemize}
\item $(\bT,\mu)$ is a tame, elliptic, regular pair (in Kaletha's sense \cite[Definition 3.6.5]{KalYu}) relative to the connected, reductive group $\bG$ defined over a field $F$ that is a finite extension of $\Q_p$ (subject to the conditions on $p$ in \cite[\S2.1]{KalYu}).
\item $\pi (\mu)$ is the associated regular supercuspidal representation of $G= \bG(F)$ (as defined in \cite[\S3.8]{KalYu}).
\item $\Theta$ is a $G$-orbit of involutions of $G$, where ``involution of $G$'' means ``$F$-automorphism of order two'' and where $G$ acts on involutions by $(g\cdot \theta )(g') = g\theta (g^{-1}g'g)g^{-1}$.
\item $\langle \Theta , \pi (\mu)\rangle_G^1$ is the dimension of the space ${\rm Hom}_{G^\theta}(\pi (\mu),1)$, for some (hence all) $\theta\in \Theta$, where $G^\theta$ denotes the group of fixed points of $\theta$ in $G$.
\item $T\bs \Theta_\bT$ is the set of $T$-orbits in the set
$$\Theta_\bT = \{ \theta\in \Theta\ : \  \theta (\bT) = \bT\},$$ where $T=\bT (F)$.
\item ${\rm m}_T^G (\zeta) = [G_{\theta} :T_{\theta} G^{\theta}]$, for any element $\theta$ in $\zeta$, where $$G_\theta = \{ g\in G\ : g\cdot \theta = \theta\},$$ and $T_\theta = T\cap G_\theta$.
\item $\varepsilon = (\varepsilon_\theta)_{\theta \in \Theta_\bT}$ is a family of quadratic characters $\varepsilon_\theta : T^\theta \to \{ \pm 1\}$ defined in \S\ref{sec:epsi}.
\item $\langle \zeta ,\mu \rangle_T^\varepsilon =\begin{cases}1,&\text{if $\mu | T^\theta = \varepsilon_\theta$ for one (hence all) $\theta\in \zeta$},\\
0,&\text{otherwise}.\end{cases}$
\end{itemize}

One may view the correspondence $\mu \mapsto \pi (\mu)$ as being ``induction-like,'' in the same spirit that cohomological induction (e.g., Deligne-Lusztig induction) resembles ordinary induction of representations of finite groups in certain ways.
To justify this point of view, we offer the following non-standard statement of  a standard formula (due to Mackey) that pertains to ordinary induction:
$$\langle \Theta , \pi (\mu)\rangle_G^1 =\sum_{\zeta\in T\bs \Theta} {\rm m}_T^G(\zeta)\ \langle \zeta ,\mu \rangle_T^1,$$
where:
\begin{itemize}
\item $\mu$ is a representation of a subgroup $T$ of a finite group $G$.
\item $\pi (\mu)$ is the induced representation ${\rm Ind}_T^G (\mu)$ of $G$.
\item $\Theta$ is a $G$-orbit of involutions of $G$, where ``involution of $G$'' means ``automorphism of order two'' and where $G$ acts on involutions by $(g\cdot \theta )(g') = g\theta (g^{-1}g'g)g^{-1}$.
\item $\langle \Theta , \pi (\mu)\rangle_G^1$ is the dimension of the space ${\rm Hom}_{G^\theta}(\pi (\mu),1)$, for some (hence all) $\theta\in \Theta$, and $G^\theta$ denotes the group of fixed points of $\theta$ in $G$.
\item $T\bs \Theta$ is the set of $T$-orbits in $\Theta$.
\item ${\rm m}_T^G (\zeta) = [G_{\theta} :T_{\theta} G^{\theta}]$, where $\theta$ is any element of $\zeta$, and $G_\theta = \{ g\in G\ : g\cdot \theta = \theta\}$, and $T_\theta = T\cap G_\theta$.
\item $\langle \zeta ,\mu \rangle_T^1$  
is the dimension of the space ${\rm Hom}_{T^\theta}( \mu,1)$, for some (hence all) $\theta\in \zeta$, where $T^\theta = T\cap G^\theta$.
\end{itemize}

To see the latter formula in its familiar form, and for further details, we refer to \S\ref{sec:standardMackey}.  As a matter of convenience, we have stated the formula in the setting of finite groups, but it is routine to generalize it to other settings in which the standard Mackey formula holds.

A similar formula for Deligne-Lusztig induction is given in \cite{ANewHM} and it is recalled below in \S\ref{sec:Lusztigformula}.

These examples hint at a general framework for induction and induction-like constructions.  We partially develop such a framework in this paper and use it in the proof of our main result.

Just as one can iteratively induce a representation up a tower of subgroups, one can also imagine iterating a sequence of generalized induction operations.  Our approach to constructing regular supercuspidal representations involves this type of iteration.  In this regard, it follows \cite{ANewTasho}, rather than \cite{KalYu}.

Let us now explain  this in more detail. 
In \cite{ANewYu}, we have revised Jiu-Kang Yu's construction \cite{MR1824988} of supercuspidal representations, and in \cite{ANewTasho} we have applied the results of \cite{ANewYu} to give a more direct construction of Kaletha's map $\mu\mapsto \pi(\mu)$.  A major objective of this paper is to demonstrate that this direct construction of Kaletha's regular supercuspidal representations can be useful in simplifying the development of applications.  Specifically, this is shown in the context of the theory of distinguished representations.

Though the construction of supercuspidal representations has simplified in various ways, it remains rather technical and requires numerous notations.  This, unfortunately, limits our ability to write a paper that is both self-contained and brief.  So, in the interest of brevity, we refer to  \cite{ANewYu} and \cite{ANewTasho} for background material and many of our notations.

In our approach, the first step in the construction of a regular supercuspidal representation $\pi(\mu)$ of $G$  is the construction of a  representation $\rho_T^\mu$ of the  subgroup $TH_{x,0}$ of $G$, as in \S2.4--2.6 of  \cite{ANewTasho}.  Here, $H_{x,0}$ is a certain maximal parahoric subgroup of a reductive subgroup $H$ of $G$ (as in \cite{ANewTasho}). 
When $\pi (\mu)$ has depth zero, the representation $\rho_T^\mu$  is identical to Kaletha's representation $\tilde\kappa_{(S,\theta)}$.  (See \cite[\S3.4.4]{KalYu}.) In general, $\rho_T^\mu$ is  obtained, roughly speaking, by inflating a Deligne-Lusztig representation to  $H_{x,0}$ and then twisting by a character of $T$.

As with Deligne-Lusztig induction, the correspondence $\mu\mapsto \rho_T^\mu$ is induction-like, as it admits a formula
$$\left\langle \xi ,\rho_T^\mu\right\rangle^{\varepsilon^+}_{TH_{x,0}} 
= 
\sum_{\zeta\in T\bs \xi_{\bT}} {\rm m}_T^{TH_{x,0}} (\zeta) 
\left\langle \zeta , \mu\right\rangle^{\varepsilon}_{T},$$
similar to the formulas above.  (See Proposition \ref{maincoeff}.)

The next step in the construction of $\pi (\mu)$ is to induce $\rho_T^\mu$ to a representation $\rho$ of the subgroup $H_x$.  (See \S2.2 \cite{ANewTasho} and the introduction of \cite{ANewTasho} for the definition of $H_x$.)  There is an accompanying formula
$$\langle \vartheta , \rho\rangle_{H_x}^{\varepsilon^+} = \sum_{\xi \in (TH_{x,0})\bs \vartheta} {\rm m}_{TH_{x,0}}^{H_x}(\xi)\ \langle \xi ,\rho_T^\mu\rangle^{\varepsilon^+}_{TH_{x,0}} .$$  (See \S\ref{sec:standardMackey}.)

The latter two formulas can be combined to give
$$\langle \vartheta , \rho\rangle_{H_x}^{\varepsilon^+} = \sum_{\zeta\in T\bs \vartheta_{\bT}} {\rm m}_T^{H_{x}} (\zeta) 
\left\langle \zeta , \mu\right\rangle^{\varepsilon}_{T} .$$
Here, we are using the transitivity property (proven in \S\ref{sec:indweights})
$$ {\rm m}_{TH_{x,0}}^{H_x}(\xi) \cdot {\rm m}_T^{TH_{x,0}} (\zeta)  =  {\rm m}_T^{H_{x}} (\zeta)$$
that applies when $\zeta \subset \xi$.  (In our induction framework, it is this transitivity property that is the key to the transitivity of induction.)

The representation $\rho$ is an example of a ``permissible representation,'' in the terminology of \cite{ANewYu}.  The main focus  of \cite{ANewYu} is the development of an induction-like modification of  Yu's construction of tame supercuspidal representations so that it maps permissible representations $\rho$ to supercuspidal representations $\pi (\rho)$ of $G$.  In the present case, the representation $\pi (\rho)$, with $\rho$ associated to $\mu$ as sketched above, is the regular supercuspidal representation $\pi (\mu)$ associated to $\mu$.

The main result of \cite{ANewHM} is the formula
$$\langle \Theta,\pi(\rho)\rangle_G^1 = \sum_{\vartheta\in H_x\bs \Theta_{H_x}} {\rm m}^G_{H_x}(\vartheta)\, \langle \vartheta , \rho\rangle_{H_x}^{\varepsilon^+},$$
where $\Theta_{H_x}$ is the set of $\theta\in \Theta$ such that $\theta (H_x) = H_x$.
Because of this formula, we regard the construction $\rho\mapsto \pi(\rho)$ as being induction-like.
Combining this formula with the previous formulas gives the main result in this paper (stated above).

Above we have referred to a ``framework'' that encompasses some well-known variations on representation-theoretic induction.  But, in truth, we have only presented the beginnings of a framework. 
In particular, we do not formally define what it means for a representation-theoretic construction to be induction-like.  Informally, if $H$ is a subgroup of a group $G$ then a correspondence 
$\mu\mapsto \pi (\mu)$ from representations of $H$ to representations of $G$ is {\it induction-like} if it satisfies formulas
$$\langle \Theta , \pi (\mu)\rangle_G^{\chi} =\sum_{\zeta} {\rm m}_H^G(\zeta)\ \langle \zeta ,\mu \rangle_H^{\chi\varepsilon}$$
analogous to those presented above, where $\zeta$ is summed over a suitable subset of $H\bs \Theta$.
 It is not yet clear whether the preliminary steps taken in this paper  will lead to a true general framework.
 (See Definition \ref{pairingdef} for a precise definition of the pairings.)

One can use our main formula to determine the distinguished regular supercuspidal representations of $G$.  Doing so yields the main theorem in Chong Zhang's paper \cite{CZdist}, which says that $\pi (\mu)$ is distinguished with respect to some (hence every) involution in $\Theta$ precisely when there exists an involution $\theta\in \Theta$ such that $\theta (\bT) = \bT$ and $\mu|T^\theta = \varepsilon_\theta$.

The main formula in this paper resembles Proposition 8.11 \cite{MR3027804} which applies to the special case in which $G = \GL_n (F)$ and $G^\theta$ is an orthogonal group in $n$-variables.  That result is used in \cite{MR3027804} to prove a symmetric space result that the space
$$ {\rm Hom}_G( \pi ,C^\infty ((\bG^\theta \bs \bG)(F)))$$  has dimension 0 or 4
for every irreducible, tame supercuspidal representation $\pi$ of $G$.  (See Theorem 1.1 \cite{MR3027804} for a more precise statement.)
This Hom-space is naturally expressed as a direct sum of Hom-spaces param\-e\-trized by pure inner forms of $\bG^\theta$.
It is only natural to suspect that more such examples, and perhaps a general symmetric space result, will follow from the results in this paper.
We hope to consider this issue in a sequel to this paper.

Finally, we wish to gratefully acknowledge that this paper is greatly influenced by the our earlier collaborations with Fiona Murnaghan and Joshua Lansky on the theory of distinguished supercuspidal representations, most notably the papers \cite{MR2431732} and \cite{MR2925798}.

\section{Induction-Like Constructions}

\subsection{Mackey's formula}\label{sec:standardMackey}

One of the most basic formulas in representation theory is Mackey's formula
$${\rm Hom}_K({\rm Ind}_H^G(\rho),\chi)\cong \bigoplus_{HgK\in H\bs G/K}{\rm Hom}_{H\cap gKg^{-1}}(\rho,{}^g\chi),$$
where:
\begin{itemize}
\item $H$ and $K$ are subgroups of a finite group $G$,
\item $\chi$ is a character of $K$,
\item $\rho$ is a complex representation of $H$,
\item ${}^g\chi (h) = \chi(g^{-1}hg)$.
\end{itemize}
The isomorphism is given explicitly as $$\lambda\leftrightarrow (\lambda_g),$$
where, given $v$ in the space of $\rho$ 
$$\lambda_g(v) = \lambda(f_{g,v}),$$ with $$f_{g,v}(hgk) = \rho(hgkg^{-1})v,$$ for $h\in h$ and  $k\in K$ and $f_{g,v}\equiv 0$ outside $HgK$.
So $\lambda_g$ controls the restriction of $\lambda$ to the functions in ${\rm Ind}_H^G(\rho)$ with support in $HgK$.

If we replace $(G,H,K)$, with $(H,K,H^\theta)$,
where   $H^\theta$ is the group of fixed points of an automorphism $\theta$ of $G$ of order two, then the Mackey formula says
$${\rm Hom}_{H^\theta} ({\rm Ind}_K^H(\rho),\chi)\cong \bigoplus_{KhH^\theta \in K\bs H/H^\theta }{\rm Hom}_{K\cap hH^\theta h^{-1}}(\rho,{}^h\chi).$$
We observe that $hH^\theta h^{-1}$ is the group $H^{h\cdot \theta}$ of fixed points of the automorphism
$(h\cdot \theta)(g) = h\theta (h^{-1}gh)h^{-1}$.
Now let $H_\theta$ be the group of elements $h\in H$ such that $h\cdot \theta = \theta$ and assume that $\chi$ has the property that ${}^h\chi = \chi$ whenever $h\in H_\theta$.

Now suppose $h_1$ and $h_2$ are elements of $H$ that are in different double cosets in $K\bs H/H^\theta$, but $h_1H_\theta = h_2H_\theta$ (and hence $h_1\cdot \theta = h_2\cdot \theta$).  Then the summands associated to $h_1$ and $h_2$ are identical, that is,
$${\rm Hom}_{K\cap H^{h_1\cdot\theta }}(\rho,{}^{h_1}\chi) =
{\rm Hom}_{K\cap H^{h_2\cdot \theta}}(\rho,{}^{h_2}\chi).$$
If we aggregate such repeated terms, the Mackey formula becomes
$${\rm Hom}_{H^\theta} ({\rm Ind}_K^H(\rho),\chi)\cong \!\!\!\!\!\bigoplus_{KhH^\theta \in K\bs H/H_\theta }\!\!\!\!\! {\rm m}_K^H(h\cdot \theta)\cdot {\rm Hom}_{K\cap H^{h\cdot \theta}}(\rho,{}^h\chi),$$
for  multiplicity constants $${\rm m}_K^H(\theta') = [H_{\theta'} : (K\cap H_{\theta'} )H^{\theta'}].$$

Now suppose $\vartheta$ is an $H$-orbit of automorphisms of $H$ of order two.  Let $\chi = (\chi_\theta)_{\theta\in \vartheta}$ be a family of characters, where $\chi_\theta$ is a character of $H^\theta$ and $$\chi_{h\cdot \theta}(hkh^{-1}) = \chi_\theta(k),$$ for all $h\in H$ and $k\in H^\theta$.

Let
$$\langle \vartheta ,{\rm Ind}_K^H(\rho)\rangle_H^\chi = \dim {\rm Hom}_{H^\theta} ({\rm Ind}_K^H(\rho),\chi_\theta),$$ for some (hence all) $\theta\in \vartheta$.
Similarly, when $\xi$ is a $K$-orbit in $\vartheta$ let
$$\langle \xi , \rho\rangle_K^\chi = \dim {\rm Hom}_{H^\theta} ( \rho,\chi_\theta),$$
for some (hence all) $\theta\in \xi$.

The Mackey formula now implies:

\begin{proposition}
Let $K\subset H$ be subgroups of a finite group $G$, and let $\rho$ be an irreducible, complex representation of $K$.  Let $\vartheta$ be an $H$-orbit of automorphisms of $G$ of order two.   Let $\chi = (\chi_\theta)_{\theta\in \vartheta}$ be a family of characters $\chi_\theta : H^\theta\to \C^\times$ such that $\chi_{h\cdot \theta}(hkh^{-1}) = \chi_\theta(k)$, for all $h\in H$ and $k\in H^\theta$.  Then
$$\left\langle \vartheta ,{\rm Ind}_K^H(\rho)\right\rangle_H^\chi = \sum_{\xi\subset\vartheta} {\rm m}_K^H(\xi) \ 
\langle \xi , \rho\rangle_K^\chi,$$ where the sum is taken over all $K$-orbits $\xi$ in $\vartheta$.
\end{proposition}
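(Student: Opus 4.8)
The plan is to fix a representative $\theta\in\vartheta$, to unwind the definition
$$\langle\vartheta,{\rm Ind}_K^H(\rho)\rangle_H^\chi=\dim{\rm Hom}_{H^\theta}({\rm Ind}_K^H(\rho),\chi_\theta),$$
and to feed this into the aggregated form of Mackey's formula recorded above in this section, applied with $(G,H,K)$ replaced by $(H,K,H^\theta)$. First I would check that the family condition $\chi_{h\cdot\theta}(hkh^{-1})=\chi_\theta(k)$ supplies precisely the hypothesis that formula needs, namely ${}^h\chi_\theta=\chi_\theta$ for $h\in H_\theta$: indeed, for such $h$ one has $h\cdot\theta=\theta$, so for $k\in H^\theta$ the family condition gives
$$\chi_\theta(k)=\chi_{h\cdot\theta}(k)=\chi_{h\cdot\theta}\bigl(h(h^{-1}kh)h^{-1}\bigr)=\chi_\theta(h^{-1}kh)={}^h\chi_\theta(k).$$
The same computation for an arbitrary $h\in H$ shows ${}^h\chi_\theta=\chi_{h\cdot\theta}$ as characters of $hH^\theta h^{-1}=H^{h\cdot\theta}$. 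Substituting into the aggregated Mackey formula then yields
$${\rm Hom}_{H^\theta}({\rm Ind}_K^H(\rho),\chi_\theta)\cong\bigoplus_{KhH_\theta\in K\bs H/H_\theta}{\rm m}_K^H(h\cdot\theta)\cdot{\rm Hom}_{K\cap H^{h\cdot\theta}}(\rho,\chi_{h\cdot\theta}).$$

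Next I would reinterpret the index set $K\bs H/H_\theta$ as the set of $K$-orbits in $\vartheta$. The rule $KhH_\theta\mapsto(K\text{-orbit of }h\cdot\theta)$ is well defined, since $(khn)\cdot\theta=k\cdot(h\cdot\theta)$ whenever $n\in H_\theta$; it is surjective, since $\vartheta$ is a single $H$-orbit; and it is injective, since if $h'\cdot\theta=k\cdot(h\cdot\theta)$ with $k\in K$ then $h^{-1}k^{-1}h'\in H_\theta$, forcing $Kh'H_\theta=KhH_\theta$. Under this bijection the summand attached to a double coset $KhH_\theta$ belongs to a $K$-orbit $\xi$, with $\theta':=h\cdot\theta$ a choice of element of $\xi$, and contributes ${\rm m}_K^H(\theta')\cdot\dim{\rm Hom}_{K\cap H^{\theta'}}(\rho,\chi_{\theta'})$ to the total dimension.

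It then remains to check that both of these factors depend only on $\xi$, not on the chosen $\theta'\in\xi$. One first records, once and for all, that $H^{\theta'}$ is normal in $H_{\theta'}$ — because $k_0\cdot\theta'=\theta'$ forces $\theta'(k_0xk_0^{-1})=k_0xk_0^{-1}$ for $x\in H^{\theta'}$ and $k_0\in H_{\theta'}$ — so that the index ${\rm m}_K^H(\theta')=[H_{\theta'}:(K\cap H_{\theta'})H^{\theta'}]$ is meaningful. If $\theta''=k\cdot\theta'$ with $k\in K$, then $H_{\theta''}=kH_{\theta'}k^{-1}$ and $H^{\theta''}=kH^{\theta'}k^{-1}$, and since $kKk^{-1}=K$ conjugation by $k$ sends $(K\cap H_{\theta'})H^{\theta'}$ onto $(K\cap H_{\theta''})H^{\theta''}$; hence ${\rm m}_K^H(\theta'')={\rm m}_K^H(\theta')$. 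For the Hom-space, the relations $H^{\theta''}=kH^{\theta'}k^{-1}$ and $\chi_{\theta''}={}^k\chi_{\theta'}$ combine with the map $\lambda\mapsto\bigl(v\mapsto\lambda(\rho(k^{-1})v)\bigr)$, which is an isomorphism ${\rm Hom}_{K\cap H^{\theta'}}(\rho,\chi_{\theta'})\xrightarrow{\sim}{\rm Hom}_{K\cap H^{\theta''}}(\rho,\chi_{\theta''})$; here it is essential that $k\in K$, so that $\rho(k^{-1})$ is defined and intertwines correctly. Thus the summand attached to $\xi$ has dimension ${\rm m}_K^H(\xi)\,\langle\xi,\rho\rangle_K^\chi$, and summing over the $K$-orbits $\xi\subset\vartheta$ gives the asserted formula.

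The genuinely new content beyond the already-derived aggregated Mackey formula is just the bookkeeping in the last two paragraphs. I expect the only real (albeit routine) obstacle to be keeping the manipulations with $H_\theta$, $H^\theta$, and the double cosets straight — in particular the normality $H^{\theta'}\triangleleft H_{\theta'}$ and the $K$-conjugation invariance of the relevant Hom-spaces — which is why I would isolate the normality statement at the outset and refer back to it.
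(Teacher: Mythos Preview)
Your proposal is correct and follows exactly the route the paper intends: the paper does not give a separate proof of this proposition but simply says ``The Mackey formula now implies'' it, after having derived the aggregated Mackey formula indexed by $K\backslash H/H_\theta$. Your write-up supplies precisely the bookkeeping the paper leaves implicit --- verifying that the compatibility hypothesis on $\chi$ yields ${}^h\chi_\theta=\chi_\theta$ for $h\in H_\theta$ and ${}^h\chi_\theta=\chi_{h\cdot\theta}$ in general, identifying $K\backslash H/H_\theta$ with the set of $K$-orbits in $\vartheta$, and checking that ${\rm m}_K^H(\theta')$ and $\dim{\rm Hom}_{K\cap H^{\theta'}}(\rho,\chi_{\theta'})$ depend only on the $K$-orbit of $\theta'$.
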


\subsection{Lusztig's formula}\label{sec:Lusztigformula}

Given a general position character $\lambda$ of the $\F_q$-rational points $\mathsf{T}(\F_q)$ of an elliptic maximal $\F_q$-torus $\mathsf{T}$ in a connected, reductive $\F_q$-group $\mathsf{G}$,  
Deligne and Lusztig \cite{MR0393266} have defined an  irreducible cuspidal representation $$
{}^{\scriptscriptstyle\mathsf{DL}}
\mathsf{ind}_{\mathsf{T}}^{\mathsf{G}} (\lambda)= \pm R_{\mathsf{T}}^\lambda$$ of $\mathsf{G}(\F_q)$.  The Deligne-Lusztig map $$\lambda \mapsto {}^{\scriptscriptstyle\mathsf{DL}}
\mathsf{ind}_{\mathsf{T}}^{\mathsf{G}} (\lambda)$$ has functorial properties similar to the properties of representation-theoretic induction.   Accordingly, it is called ``Deligne-Lusztig induction.''

Both the similarities and differences between Deligne-Lusztig induction and standard induction are evident if one tries to obtain a Mackey-type formula for Deligne-Lusztig induction.

Suppose $q$ is odd.   Let  $\Theta$ is a $\mathsf{G}(\F_q)$-orbit of involutions of $\mathsf{G}(\F_q)$, where ``involution of $\mathsf{G}(\F_q)$'' means ``$\F_q$-automorphism of $\mathsf{G}$ of order two,'' and the action of $\mathsf{G}(\F_q)$ on involutions is again defined according to $(g\cdot\theta) (h) = g\theta(g^{-1}hg)g^{-1}$.

Let $\mathsf{T}(\F_q)\bs \Theta_{\mathsf{T}(\F_q)}$ be the set of $\mathsf{T}(\F_q)$-orbits in the set
$$\Theta_{\mathsf{T}(\F_q)}= \left\{ \theta\in \Theta\ :\ \theta (\mathsf{T})= \mathsf{T}\right\}.$$
Given $\theta\in  \Theta_{\mathsf{T}(\F_q)}$, define a character
$$\varepsilon_\theta :\mathsf{T}(\F_q)^\theta\to \{ \pm \}$$ by
$$\varepsilon_\theta (t) = \det \left( \Ad (t)\, \big|\,  {\rm Lie}(\mathsf{G})^\theta (\F_q)) \right).$$
(See \cite[\S4.3]{ANewHM} for other expressions of this character.)

Let $$\left\langle \Theta ,{}^{\scriptscriptstyle\mathsf{DL}}
\mathsf{ind}_{\mathsf{T}(\F_q)}^{\mathsf{G}(\F_q)} (\lambda)\right\rangle_{\mathsf{G}(\F_q)}^1 = \dim {\rm Hom}_{\mathsf{G}(\F_q)^\theta} \left({}^{\scriptscriptstyle\mathsf{DL}}
\mathsf{ind}_{\mathsf{T}(\F_q)}^{\mathsf{G}(\F_q)} (\lambda), 1\right),$$ for some (hence all) $\theta\in \Theta$.

Suppose $\xi\in \mathsf{T}(\F_q)\bs \Theta_{\mathsf{T}(\F_q)}$.  Let
$${\rm m}_{\mathsf{T}(\F_q)}^{\mathsf{G}(\F_q)}(\xi) =  [\mathsf{G}(\F_q)_{\theta} : (\mathsf{T}(\F_q)\cap \mathsf{G}(\F_q)_{\theta} )\mathsf{G}(\F_q)^{\theta}],$$ for some (hence all) $\theta\in \xi$.  Let 
$$\langle \xi , \lambda\rangle_{\mathsf{T}(\F_q)}^\varepsilon = \begin{cases}1,&\text{if $\lambda | \mathsf{T}(\F_q)^\theta = \varepsilon_\theta$ for one (hence all) $\theta\in \xi$},\\
0,&\text{otherwise}.\end{cases}
$$

The following result is a special case of Theorem 2.0.1 \cite{ANewHM}:

\begin{proposition}
$$\left\langle \Theta ,{}^{\scriptscriptstyle\mathsf{DL}}
\mathsf{ind}_{\mathsf{T}(\F_q)}^{\mathsf{G}(\F_q)} (\lambda)\right\rangle_{\mathsf{G}(\F_q)}^1 = \sum_{\xi\in \mathsf{T}(\F_q)\bs \Theta_{\mathsf{T}(\F_q)}} {\rm m}_{\mathsf{T}(\F_q)}^{\mathsf{G}(\F_q)}(\xi) \ 
\langle \xi , \lambda\rangle_{\mathsf{T}(\F_q)}^\varepsilon$$
\end{proposition}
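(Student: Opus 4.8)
The plan is to obtain this Proposition as the special case $\chi = 1$ of \cite[Theorem 2.0.1]{ANewHM}. That theorem asserts a formula of the same shape,
$$\left\langle \Theta, {}^{\scriptscriptstyle\mathsf{DL}}\mathsf{ind}_{\mathsf{T}(\F_q)}^{\mathsf{G}(\F_q)}(\lambda)\right\rangle_{\mathsf{G}(\F_q)}^{\chi} = \sum_{\xi\in\mathsf{T}(\F_q)\bs\Theta_{\mathsf{T}(\F_q)}} {\rm m}_{\mathsf{T}(\F_q)}^{\mathsf{G}(\F_q)}(\xi)\,\langle\xi,\lambda\rangle_{\mathsf{T}(\F_q)}^{\chi\varepsilon},$$
for an arbitrary compatible family of characters $\chi = (\chi_\theta)_{\theta\in\Theta_{\mathsf{T}(\F_q)}}$, $\chi_\theta\colon \mathsf{G}(\F_q)^\theta\to\C^\times$, with $\langle\Theta,\cdot\rangle^\chi = \dim\mathrm{Hom}_{\mathsf{G}(\F_q)^\theta}(\cdot,\chi_\theta)$ and $\langle\xi,\lambda\rangle^{\chi\varepsilon}$ equal to $1$ or $0$ according as $\lambda|\mathsf{T}(\F_q)^\theta = \chi_\theta\varepsilon_\theta$ on $\mathsf{T}(\F_q)^\theta$ or not. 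The constant family $\chi_\theta\equiv 1$ trivially satisfies the compatibility condition, and for it the left-hand side is exactly the quantity $\langle\Theta, {}^{\scriptscriptstyle\mathsf{DL}}\mathsf{ind}(\lambda)\rangle^1$ of the Proposition while $\chi\varepsilon$ collapses to $\varepsilon$ on the right, giving the claim. Before invoking the theorem one must check that its hypotheses hold here: $q$ is odd (already assumed), $\lambda$ is in general position so that ${}^{\scriptscriptstyle\mathsf{DL}}\mathsf{ind}(\lambda) = \pm R_{\mathsf{T}}^\lambda$ is an honest irreducible cuspidal representation, and $\lambda$ is one-dimensional, which is what makes $\langle\xi,\lambda\rangle^\varepsilon\in\{0,1\}$ — a feature already built into the statement.

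For completeness I indicate how I would prove such a formula directly, which is also the route taken in \cite{ANewHM}. Fix $\theta\in\Theta$ and set $\pi = {}^{\scriptscriptstyle\mathsf{DL}}\mathsf{ind}_{\mathsf{T}(\F_q)}^{\mathsf{G}(\F_q)}(\lambda) = \varepsilon_{\mathsf{G}}\varepsilon_{\mathsf{T}}R_{\mathsf{T}}^\lambda$, where $\varepsilon_{\mathsf{G}}\varepsilon_{\mathsf{T}}$ is the Deligne--Lusztig sign. Averaging the character of $\pi$ over $\mathsf{G}(\F_q)^\theta$ gives
$$\dim\mathrm{Hom}_{\mathsf{G}(\F_q)^\theta}(\pi,1) = \frac{1}{|\mathsf{G}(\F_q)^\theta|}\sum_{u\in\mathsf{G}(\F_q)^\theta}\mathrm{tr}\,\pi(u) = \frac{\varepsilon_{\mathsf{G}}\varepsilon_{\mathsf{T}}}{|\mathsf{G}(\F_q)^\theta|}\sum_{u\in\mathsf{G}(\F_q)^\theta}R_{\mathsf{T}}^\lambda(u).$$
Now substitute the Deligne--Lusztig character formula for $R_{\mathsf{T}}^\lambda(u)$ (or run the Lefschetz fixed-point argument on the Deligne--Lusztig variety directly) and reorganize the resulting double sum over $\mathsf{G}(\F_q)^\theta$. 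The key geometric input is that the $\theta$-stable maximal $\F_q$-tori that are $\mathsf{G}(\F_q)$-conjugate to $\mathsf{T}$, taken up to $\mathsf{G}(\F_q)^\theta$-conjugacy, are governed by the $\mathsf{T}(\F_q)$-orbits in $\Theta_{\mathsf{T}(\F_q)}$; this is what converts the sum into one indexed by $\xi$, and tracking the relevant stabilizers produces the multiplicity ${\rm m}_{\mathsf{T}(\F_q)}^{\mathsf{G}(\F_q)}(\xi)$. Since $\lambda$ is a character of a torus, the contribution of a given $\xi$ vanishes unless $\lambda|\mathsf{T}(\F_q)^\theta$ coincides with the quadratic character forced by the sign bookkeeping, in which case it equals $\pm{\rm m}_{\mathsf{T}(\F_q)}^{\mathsf{G}(\F_q)}(\xi)$; one then identifies that character with $\varepsilon_\theta(t) = \det(\mathrm{Ad}(t)\,|\,\mathrm{Lie}(\mathsf{G})^\theta(\F_q))$ and checks the overall sign is $+1$.

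The main obstacle, in either route, is precisely this last identification: showing that the quadratic character produced by collating the Deligne--Lusztig sign $\varepsilon_{\mathsf{G}}\varepsilon_{\mathsf{T}}$, the orientation constants in the character formula, and the fixed-point count on $\mathsf{G}(\F_q)^\theta$ is exactly $\varepsilon_\theta$ — and not some twist of it — and that no residual sign survives. This is the content of the sign computations in \cite[\S4.3]{ANewHM}, where the hypothesis that $q$ is odd enters to control the $2$-torsion. Granting that, the remaining ingredients — the orbit--stabilizer combinatorics yielding ${\rm m}_{\mathsf{T}(\F_q)}^{\mathsf{G}(\F_q)}(\xi)$ and the collapse of the per-$\xi$ contribution to the single condition $\lambda|\mathsf{T}(\F_q)^\theta = \varepsilon_\theta$ — are routine. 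Accordingly I would present the proof as the specialization $\chi = 1$ of \cite[Theorem 2.0.1]{ANewHM}, with the hypothesis checks recorded above.
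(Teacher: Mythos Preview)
Your proposal is correct and matches the paper's approach exactly: the paper states this Proposition as ``a special case of Theorem 2.0.1 \cite{ANewHM}'' without further argument, which is precisely your specialization $\chi = 1$. Your additional sketch of the direct argument and the discussion of the sign/$\varepsilon_\theta$ identification are bonus material beyond what the paper provides.
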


The relation between this result and the main results of \cite{MR1106911} is explained in \cite{ANewHM}.  (See also \cite{MR2925798}.)

\subsection{The induction weights ${\rm m}_K^H (\theta)$}\label{sec:indweights}

The material in this section can be developed in various representation-theoretic settings.  We focus on the setting that fits the main application of this paper.
In general, one needs to have a  collection (or category) of acceptable groups and representations, as well a notion  of ``involution'' for a given group.

For our purposes, it is  convenient to have a  universal group that contains all other groups under consideration.  For us, that group is the group $G= \bG (F)$, where $\bG$ is a connected, reductive group over $F$.  All other groups in this section will be subgroups of $G$ (or $\bG$).

When we use the terminology ``involution'' or ``involution of $G$,'' we mean an $F$-auto\-mor\-phism of $\bG$ of order two.   In general, if $\bH$ is an $F$-subgroup of $\bG$ then we take
$$\bH^\theta = \{ h\in \bH\ :\ \theta (h) = h\}$$
and 
\begin{eqnarray*}
\bH_\theta&=& \{ h\in \bH\ :\ h\cdot \theta =\theta\}\\
&=& \{h\in \bH\ :\ h\theta(h)^{-1}\in Z(\bG)\},\end{eqnarray*}
where $Z(\bG)$ denotes the center of $\bG$.

The group $\bH_\theta$ has other natural descriptions that are given in \cite[Corollary 1.3]{MR1215304}.
Note that we are {\it not} defining $\bH_\theta$ to be the stabilizer of $\theta |\bH$.  We also are not assuming that $\bH$ is $\theta$-stable.

A motivating example, considered in \cite{MR1674664, MR2925798, MR3027804}, is the example in which $\bH = \GL_n$, $\bH^\theta$ is an orthogonal group, and $\bH_\theta$ is the corresponding orthogonal similitude group.  
In general, we   view the homomorphism
$$\mu :\bH_\theta / \bH^\theta\to Z(\bG) : h\bH^\theta \mapsto h\theta(h)^{-1}$$
as an analogue of  the similitude homomorphism from the motivating example.  This homomorphism
realizes $\bH_\theta / \bH^\theta$ as a subgroup of $Z(\bG)$.  In particular, it is abelian.

For $F$-subgroups $\bH$ of $\bG$, we use the notation $H = \bH(F)$.  In other words, taking $F$-rational points is reflected by non-boldface notations.

\begin{definition}
If $\bK\subset\bH$ are $F$-subgroups of $\bG$ and $\theta$ is an involution of $G$ then define
$${\rm m}_K^H(\theta) = [H_\theta : K_\theta H^\theta].$$
When $S$ is a set of involutions of $G$ such that ${\rm m}_K^H(\theta)$ is constant as $\theta$ varies over $S$, let ${\rm m}_K^H(S)$ denote this common value of ${\rm m}_K^H(\theta)$.
\end{definition}

Let $G$ act on the set of involutions of $G$ by
$$g\cdot \theta = {\rm Int}(g)\circ\theta\circ {\rm Int}(g)^{-1},$$ where ${\rm Int}(g)(h) = ghg^{-1}$.

\begin{lemma}
If $\bK\subset\bH$ are $F$-subgroups of $\bG$, and $\theta$ is an involution of $G$, and $g\in G$, then 
$${\rm m}_K^H(g\cdot \theta) = {\rm m}_{g^{-1}Kg}^{g^{-1}Hg}( \theta).$$
\end{lemma}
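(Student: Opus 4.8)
The plan is to exploit the fact that conjugation by $g$ is an $F$-automorphism of $\bG$ (as $g\in G=\bG(F)$), so it carries $F$-subgroups to $F$-subgroups and preserves indices, and to track how it interacts with the twisted action $\theta\mapsto g\cdot\theta$. Note first that $g^{-1}\bH g$ and $g^{-1}\bK g$ are again $F$-subgroups of $\bG$ with $g^{-1}\bK g\subset g^{-1}\bH g$, so the right-hand side ${\rm m}_{g^{-1}Kg}^{g^{-1}Hg}(\theta)$ is defined, and $g\cdot\theta$ is again an involution of $G$, so the left-hand side is defined. The whole statement will then follow from two ``equivariance'' identities plus the triviality that conjugation respects products of subsets.

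First I would record, for an arbitrary $F$-subgroup $\bM$ of $\bG$, the identities
$$g^{-1}\,\bM^{\,g\cdot\theta}\,g=(g^{-1}\bM g)^{\theta},\qquad g^{-1}\,\bM_{\,g\cdot\theta}\,g=(g^{-1}\bM g)_{\theta}.$$
Both are immediate from the definitions: an element of $g^{-1}\bM g$ has the form $g^{-1}mg$ with $m\in\bM$, and $\theta(g^{-1}mg)=g^{-1}mg$ holds if and only if $g\,\theta(g^{-1}mg)\,g^{-1}=m$, i.e.\ $({\rm Int}(g)\circ\theta\circ{\rm Int}(g)^{-1})(m)=(g\cdot\theta)(m)=m$; this gives the first identity. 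For the second, apply the $G$-action by $g$ to both sides of $(g^{-1}mg)\cdot\theta=\theta$ and use that $\theta\mapsto g\cdot\theta$ is a group action to see that this is equivalent to $m\cdot(g\cdot\theta)=g\cdot\theta$. Passing to $F$-points yields the same identities with $\bM,\bH,\bK$ replaced by $M,H,K$. Applying them to $\bH$ and to $\bK$ and using $g^{-1}(AB)g=(g^{-1}Ag)(g^{-1}Bg)$, one gets
$$(g^{-1}Kg)_{\theta}\,(g^{-1}Hg)^{\theta}=(g^{-1}K_{g\cdot\theta}g)(g^{-1}H^{g\cdot\theta}g)=g^{-1}\bigl(K_{g\cdot\theta}H^{g\cdot\theta}\bigr)g,$$
while $(g^{-1}Hg)_{\theta}=g^{-1}H_{g\cdot\theta}g$. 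Since $x\mapsto g^{-1}xg$ is a group isomorphism of $H_{g\cdot\theta}$ onto $(g^{-1}Hg)_{\theta}$ carrying $K_{g\cdot\theta}H^{g\cdot\theta}$ onto $(g^{-1}Kg)_{\theta}(g^{-1}Hg)^{\theta}$, it identifies the respective coset spaces, whence
$${\rm m}_{g^{-1}Kg}^{g^{-1}Hg}(\theta)=[(g^{-1}Hg)_{\theta}:(g^{-1}Kg)_{\theta}(g^{-1}Hg)^{\theta}]=[H_{g\cdot\theta}:K_{g\cdot\theta}H^{g\cdot\theta}]={\rm m}_{K}^{H}(g\cdot\theta).$$

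The only step that genuinely requires care is the direction of the bookkeeping in the two equivariance identities: one must confirm that conjugation by $g^{-1}$ intertwines the involution $g\cdot\theta$ of $\bH$ with the involution $\theta$ of $g^{-1}\bH g$ (and not with, say, $g^{-1}\cdot\theta$), which is exactly what the two short verifications above check. Everything else is routine; note in particular that the argument uses only that conjugation is a bijection respecting left-coset decompositions, so no prior knowledge that $K_{\theta}H^{\theta}$ is a subgroup of $H_{\theta}$ is needed.
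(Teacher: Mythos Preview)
Your proof is correct and is essentially the same as the paper's. The paper compresses the argument into a single chain of equalities, silently using the equivariance identities $g^{-1}H^{g\cdot\theta}g=(g^{-1}Hg)^{\theta}$ and $g^{-1}H_{g\cdot\theta}g=(g^{-1}Hg)_{\theta}$ that you take the trouble to verify explicitly; otherwise the two arguments are identical.
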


\begin{proof}
\begin{eqnarray*}
{\rm m}_K^H(g\cdot \theta)
&=& [H_{g\cdot \theta} : K_{g\cdot\theta} H^{g\cdot \theta}]\\
&=& [g((g^{-1}Hg)_\theta )g^{-1} : g((g^{-1}Kg)_\theta (g^{-1}Hg)^\theta )g^{-1}]\\
&=& [(g^{-1}Hg)_{ \theta} : (g^{-1}Kg)_{\theta} (g^{-1}Hg)^{\theta}]\\
&=&{\rm m}_{g^{-1}Kg}^{g^{-1}Hg}( \theta).
\end{eqnarray*}
\end{proof}

\begin{corollary}
If $\bK\subset\bH$ are $F$-subgroups of $\bG$, and if 
$\mathscr{O}$ is a $K$-orbit of involutions of $G$ then 
${\rm m}_K^H(\theta)$ is constant as $\theta$ varies over $\mathscr{O}$ and thus the notation ${\rm m}_K^H(\mathscr{O})$ is well-defined.
\end{corollary}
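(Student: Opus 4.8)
The plan is to deduce the corollary immediately from the preceding Lemma, using only the fact that conjugation by an element of $K$ stabilizes both $K$ and $H$.

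First I would fix two involutions $\theta,\theta'\in\mathscr{O}$. Since $\mathscr{O}$ is a $K$-orbit, there is some $k\in K$ with $\theta'=k\cdot\theta$. Applying the Lemma with $g=k$ gives ${\rm m}_K^H(\theta')={\rm m}_K^H(k\cdot\theta)={\rm m}_{k^{-1}Kk}^{k^{-1}Hk}(\theta)$. Next I would observe that $k\in K=\bK(F)$ is an element of a group, so $k^{-1}Kk=K$; and since $\bK\subseteq\bH$ we have $k\in H=\bH(F)$, which is also a group, so $k^{-1}Hk=H$. Substituting these two equalities into the right-hand side yields ${\rm m}_K^H(\theta')={\rm m}_K^H(\theta)$. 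Hence ${\rm m}_K^H(\theta)$ does not depend on the choice of $\theta\in\mathscr{O}$, so ${\rm m}_K^H(\mathscr{O})$ is well-defined in the sense of the Definition.

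There is essentially no obstacle; all the work is in the Lemma. The only point requiring a moment's care is that in the Lemma the ambient pair $(K,H)$ is replaced by its conjugate $(g^{-1}Kg,g^{-1}Hg)$, so one must note that when $g=k\in K$ this conjugation is trivial on both $K$ and $H$ — which holds precisely because $k$ lies in $K$ and $K\subseteq H$, and $K$, $H$ are genuine groups (being $F$-points of $F$-subgroups of $\bG$). I would also remark that the corollary is the special case $\bH=\bG$, $S=\mathscr{O}$ of the constancy hypothesis appearing in the Definition, so it legitimizes the orbit notation used throughout the rest of the section.
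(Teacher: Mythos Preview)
Your proof is correct and is exactly the argument the paper intends: the corollary is stated without proof immediately after the Lemma, and your deduction---apply the Lemma with $g=k\in K$ and note $k^{-1}Kk=K$, $k^{-1}Hk=H$---is the natural expansion of that implicit step. One small inaccuracy in your closing remark: the corollary is not restricted to $\bH=\bG$; it holds for arbitrary $\bK\subset\bH\subset\bG$, and it is in that generality that it furnishes the set $S=\mathscr{O}$ in the Definition.
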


The next result is the basic transitivity property of the ${\rm m}_K^H(\theta)$'s  that is linked with the transitivity of induction.

\begin{lemma}\label{transofind}
If $\bJ\subset\bK\subset\bH$ are $F$-subgroups of $\bG$ and $\theta$ is an involution of $g$ then ${\rm m}_J^H(\theta)= {\rm m}_J^K( \theta) {\rm m}_K^H( \theta)$.
\end{lemma}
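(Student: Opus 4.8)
The plan is to compute the index $[H_\theta : J_\theta H^\theta]$ by inserting the intermediate group $K_\theta H^\theta$ and using the multiplicativity of indices for a chain of subgroups. First I would verify the chain of inclusions
$$J_\theta H^\theta \subset K_\theta H^\theta \subset H_\theta,$$
which is immediate from $J_\theta \subset K_\theta \subset H_\theta$ and $H^\theta \subset H_\theta$ (note $H^\theta \subset H_\theta$ since fixed points of $\theta$ certainly satisfy $h\theta(h)^{-1}=1 \in Z(\bG)$). Granting these, the tower law gives
$$[H_\theta : J_\theta H^\theta] = [H_\theta : K_\theta H^\theta]\cdot [K_\theta H^\theta : J_\theta H^\theta].$$
The first factor on the right is exactly ${\rm m}_K^H(\theta)$ by definition. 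So the crux is to identify the second factor $[K_\theta H^\theta : J_\theta H^\theta]$ with ${\rm m}_J^K(\theta) = [K_\theta : J_\theta K^\theta]$.

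For that identification I would use a standard index/isomorphism argument: the inclusion $K_\theta \hookrightarrow K_\theta H^\theta$ induces a bijection on coset spaces
$$K_\theta / (K_\theta \cap J_\theta H^\theta) \;\xrightarrow{\ \sim\ }\; (K_\theta H^\theta)/(J_\theta H^\theta),$$
since $K_\theta H^\theta = K_\theta \cdot (J_\theta H^\theta)$ (because $J_\theta \subset K_\theta$, so $J_\theta H^\theta \subset K_\theta H^\theta$, and $K_\theta$ surjects onto the quotient). Thus it remains to show
$$K_\theta \cap J_\theta H^\theta = J_\theta K^\theta.$$
The inclusion $\supset$ is clear since $J_\theta \subset K_\theta$ and $K^\theta \subset K_\theta \cap H^\theta$. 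For $\subset$, take $k = j h \in K_\theta$ with $j \in J_\theta$ and $h \in H^\theta$; then $h = j^{-1}k \in K$, and $h \in H^\theta$ forces $h \in K \cap H^\theta = K^\theta$, so $k = jh \in J_\theta K^\theta$. This gives the desired equality and hence $[K_\theta H^\theta : J_\theta H^\theta] = [K_\theta : J_\theta K^\theta] = {\rm m}_J^K(\theta)$, completing the chain
$${\rm m}_J^H(\theta) = {\rm m}_K^H(\theta)\cdot {\rm m}_J^K(\theta).$$

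The main obstacle — really the only non-formal point — is the set-theoretic equality $K_\theta \cap J_\theta H^\theta = J_\theta K^\theta$; once that is in hand, everything else is the tower law for indices together with the orbit-space bijection. One should also double-check that all the indices involved are finite (or at least that the multiplicativity of indices is being used in a form valid for possibly infinite index), but in the applications of the paper the relevant groups $H$, $K$, $J$ are compact-mod-center or the quotients in question are finite, so this causes no difficulty; alternatively the identity can be read as an identity of cardinals.
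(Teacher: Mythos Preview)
Your proof is correct and follows essentially the same route as the paper's. The paper's proof is the same chain
\[
[H_\theta : J_\theta H^\theta]=[H_\theta:K_\theta H^\theta]\,[K_\theta H^\theta:J_\theta H^\theta]
=[H_\theta:K_\theta H^\theta]\,[K_\theta:J_\theta(K_\theta\cap H^\theta)]
=[H_\theta:K_\theta H^\theta]\,[K_\theta:J_\theta K^\theta],
\]
so the only cosmetic difference is that the paper reaches $K_\theta\cap J_\theta H^\theta=J_\theta K^\theta$ in two steps (the modular law $K_\theta\cap J_\theta H^\theta=J_\theta(K_\theta\cap H^\theta)$, then $K_\theta\cap H^\theta=K^\theta$), whereas you prove the equality directly.
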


\begin{proof}
\begin{eqnarray*}
{\rm m}_J^H(\theta )
&=&[H_\theta :J_\theta H^\theta]\\
&=&[H_\theta :K_\theta H^\theta][K_\theta H^\theta :J_\theta H^\theta]\\
&=&[H_\theta :K_\theta H^\theta][K_\theta  :J_\theta (K_\theta \cap H^\theta )]\\
&=&[H_\theta :K_\theta H^\theta][K_\theta  :J_\theta K^\theta ]\\
&=& {\rm m}_J^K( \theta) {\rm m}_K^H( \theta).
\end{eqnarray*}
\end{proof}

Suppose $\bK\subset\bH$ are $F$-subgroups of $\bG$.  
Given an involution $\theta$ of $G$, we can consider the set $\mathscr{O}_K(H\cdot \theta)$ of $K$-suborbits of the $H$-orbit $H\cdot \theta$ of $\theta$

Define a map
$$K\bs H/H^\theta \to \mathscr{O}_K(H\cdot \theta) : KhH^\theta \mapsto Kh\cdot \theta.$$
Then the fiber of the $K$-orbit $Kh\cdot \theta$ is the set 
$$\mathscr{F}_h^K = \{ Kh'H^\theta \in K\bs H/H^\theta \ :\ Kh'\cdot \theta =Kh\cdot \theta\}.$$

\begin{lemma}
If $h\in H$ then
$\mathscr{F}_h^K = h\mathscr{F}^{h^{-1}Kh}_1$ and the cardinality of $\mathscr{F}_h^K$  is $ {\rm m}_K^H(h\cdot \theta)$.
\end{lemma}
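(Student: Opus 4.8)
The plan is to analyze the map $KhH^\theta \mapsto Kh\cdot\theta$ directly and compute its fibers by reducing to the case $h=1$. First I would establish the translation identity $\mathscr{F}_h^K = h\mathscr{F}_1^{h^{-1}Kh}$. For this, observe that $Kh'\cdot\theta = Kh\cdot\theta$ holds if and only if there is $k\in K$ with $kh'\cdot\theta = h\cdot\theta$, i.e. $h^{-1}kh' \in H_\theta$, i.e. $h^{-1}h' \in (h^{-1}Kh)H_\theta$; writing $h' = h h''$, the condition becomes $h'' \in (h^{-1}Kh)H_\theta$, which (after passing to double cosets in $K\bs H/H^\theta$, left-translated by $h^{-1}$) is exactly the statement that $h^{-1}h'H^\theta$ lies in the fiber $\mathscr{F}_1^{h^{-1}Kh}$. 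Care is needed because left translation by $h$ does not literally act on the double coset space $K\bs H/H^\theta$, but it does give a bijection from $(h^{-1}Kh)\bs H/H^\theta$ onto $K\bs H/(h\cdot H^\theta)$, and $h\cdot H^\theta = H^{h\cdot\theta}$; so I should phrase the identity carefully, perhaps identifying $\mathscr{F}_h^K$ as a subset of $K\bs H/H^\theta$ with $\{h\mathscr{F}_1^{h^{-1}Kh}\}$ where the right side is understood via this bijection, exactly as Lemma preceding (the one about ${\rm m}_K^H(g\cdot\theta) = {\rm m}_{g^{-1}Kg}^{g^{-1}Hg}(\theta)$) would suggest. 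In effect, conjugating everything by $h$ reduces the cardinality computation to the fiber over $1$ for the subgroup pair $h^{-1}Kh \subset H$ and involution $\theta$.

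Second, with the reduction in hand, I would compute $|\mathscr{F}_1^{K}|$ for a generic pair. The fiber $\mathscr{F}_1^K$ consists of double cosets $Kh'H^\theta$ with $Kh'\cdot\theta = K\cdot\theta$, equivalently $h' \in K H_\theta$ (since $Kh'\cdot\theta = K\cdot\theta$ means some $kh'$ stabilizes $\theta$ under $\cdot$, i.e. $kh'\in H_\theta$). So the fiber over $1$ is the image of $KH_\theta$ in $K\bs H/H^\theta$, which is $K\bs KH_\theta/H^\theta$. Since $H^\theta \subseteq H_\theta$, we have $KH_\theta / H^\theta$ as a set with left $K$-action (where $K$ acts through $K/(K\cap H_\theta)$ on $H_\theta/H^\theta$... but one must be careful since $K$ need not normalize $H_\theta$), and the count $|K\bs KH_\theta/H^\theta|$ should work out, via the standard double-coset counting, to $[H_\theta : K_\theta H^\theta]$ — using here that $H_\theta/H^\theta$ is abelian (as noted in the paper, it embeds in $Z(\bG)$ via $h\mapsto h\theta(h)^{-1}$) and that $K\cap H_\theta = K_\theta$. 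Concretely: $K\bs KH_\theta/H^\theta$ is in bijection with $(K\cap H_\theta)\bs H_\theta/H^\theta = K_\theta\bs H_\theta/H^\theta$, and because $H_\theta/H^\theta$ is abelian this is just $H_\theta/(K_\theta H^\theta)$, which has cardinality ${\rm m}_K^H(\theta)$. Then combining with the first step, $|\mathscr{F}_h^K| = |\mathscr{F}_1^{h^{-1}Kh}| = {\rm m}_{h^{-1}Kh}^H(\theta) = {\rm m}_K^H(h\cdot\theta)$ by the earlier Lemma, which is the assertion.

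The main obstacle I anticipate is the bijection $K\bs KH_\theta/H^\theta \cong K_\theta\bs H_\theta/H^\theta$: the clean statement ``$|AgB$-cosets$| = |$double cosets for the intersection$|$'' requires $H_\theta$ to be a group (it is, being the preimage of $Z(\bG)$ under $h\mapsto h\theta(h)^{-1}$ — actually one should verify $H_\theta$ is a subgroup, which follows since $h\mapsto h\theta(h)^{-1}$ is a cocycle and $Z(\bG)$ is a subgroup, or simply from the description in \cite[Corollary 1.3]{MR1215304}), and one needs $K\cap H_\theta = K_\theta$, which is immediate from the definitions. The only real subtlety is bookkeeping the left-translation-by-$h$ identification correctly, since $h$ carries the pair $(K, H^\theta)$ to $(K, H^{h\cdot\theta})$ rather than fixing the reference double coset space; stating Lemma's conclusion as an equality of subsets of $K\bs H/H^\theta$ versus as a bijection of index sets needs a consistent convention, but this is routine once the convention is fixed. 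I would spell out the $h=1$ count in full and then invoke the preceding two Lemmas to finish.
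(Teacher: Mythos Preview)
Your proposal is correct and follows essentially the same route as the paper's proof: reduce to the fiber over $1$ via the translation identity $\mathscr{F}_h^K = h\mathscr{F}_1^{h^{-1}Kh}$, then identify that fiber with $K_\theta\bs H_\theta/H^\theta$ and use the abelianness of $H_\theta/H^\theta$ (as a subgroup of $\bG_\theta/\bG^\theta$, equivalently of $Z(\bG)$ via $h\mapsto h\theta(h)^{-1}$) to obtain the index ${\rm m}_K^H(\theta)$. The paper's write-up is slightly terser but the logical steps, including the appeal to the earlier lemma ${\rm m}_{h^{-1}Kh}^{h^{-1}Hh}(\theta)={\rm m}_K^H(h\cdot\theta)$, are identical.
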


\begin{proof}The identity $\mathscr{F}_h^K = h\mathscr{F}^{h^{-1}Kh}_1$ follows from the definitions:
\begin{eqnarray*}
h\mathscr{F}^{h^{-1}Kh}_1
&=&\{ Khh'H^\theta\ :\ h^{-1}Khh'\cdot h^{-1}Kh\cdot \theta\}\\
&=&\{ Kh'H^\theta \in K\bs H/H^\theta \ :\ Kh'\cdot \theta =Kh\cdot \theta\}\\
&=&\mathscr{F}^{K}_h
\end{eqnarray*}

To prove the second assertion, it suffices to handle the special case in which $h=1$.  Indeed, if the $h=1$ case holds then 
\begin{eqnarray*}
|\mathscr{F}_h^K| &=& | h\mathscr{F}_1^{h^{-1}Kh}|
= |\mathscr{F}_1^{h^{-1}Kh} |
= {\rm m}_{h^{-1}Kh}^H(\theta)
= {\rm m}_{h^{-1}Kh}^{h^{-1}Hh}(\theta)\\
&=& {\rm m}_{K}^H(h\cdot \theta).
\end{eqnarray*}

Suppose $h'\in H$.  Then $Kh'H^\theta\in \mathscr{F}_1^H$ precisely when there exists $k\in K$ such that $kh'\in H_\theta$, but this occurs precisely when $Kh'H^\theta$ contains an element of $H_\theta$.  But the double cosets with a representative in $H_\theta$ are in bijection with $K_\theta\bs H_\theta/H^\theta$.  Since $H_\theta/H^\theta$ is a subgroup of the abelian group $\bG_\theta/\bG^\theta$, the cardinality of $K_\theta\bs H_\theta/H^\theta$ is the index of the subgroup $(K_\theta H^\theta)/H^\theta$ of $H_\theta/H^\theta$.  But this is just ${\rm m}_K^H(\theta)$.  So $|\mathscr{F}_1^K| = {\rm m}_K^H(\theta)$, which completes the proof.
\end{proof}

\subsection{Compatible families of characters and the pairings $\langle \mathscr{O},\tau\rangle_K^\chi$}\label{sec:compatsec}

Let us maintain the notations and setup of the previous section and, in particular, assume that $\bH$ is an $F$-subgroup of $\bG$.  In addition, we fix an $H$-orbit $\mathscr{O}$ of involutions of $G$ and a (smooth, complex) representation $\tau$ of $H$.

\begin{definition}\label{compatico}
A family $\chi = (\chi_\theta)_{\theta\in \mathscr{O}}$ of characters $\chi_\theta :H^\theta \to \C^\times$ is called a {\bf compatible family} if $$\chi_{h\cdot \theta} (hkh^{-1}) = \chi_\theta (k),$$ whenever $\theta\in \mathscr{O}$, $h\in H$ and $k\in H^\theta$.
\end{definition}

Observe that, in the previous definition, a given character $\chi_\theta$ is forced to have trivial restriction to
$$[H_\theta ,H^\theta] = \{ hkh^{-1}k^{-1}\ :\ h\in H_\theta,\, k\in H^\theta\}.$$

We also remark that if $\chi$ and $\chi'$ are two compatible families then the pointwise product $\chi\chi'$ is another compatible family.

\begin{definition}\label{pairingdef}  If $\chi$ is a compatible family, define
$\langle \mathscr{O},\tau\rangle^\chi_H$ to be the dimension of ${\rm Hom}_{H^\theta}(\tau,\chi_\theta )$ for any (hence all) $\theta\in \mathscr{O}$.
\end{definition}

The fact that $\langle \mathscr{O},\tau\rangle^\chi_H$ is indeed well-defined can be explained as follows.  Suppose $\lambda_\theta$ lies in ${\rm Hom}_{H^\theta}(\tau,\chi_\theta )$.  Then $\lambda_\theta :V_\tau\to \C$, where $V_\tau$ is the space of $\tau$.  Given $h\in H$, we define another linear form $\lambda_{h\cdot \theta}$ by
$$\lambda_{h\cdot \theta} (v) = \lambda_\theta (\tau (h)^{-1}v),$$ for $v\in V_\tau$.  Then it is routine to verify that $\lambda_\theta \mapsto \lambda_{h\cdot \theta}$ defines a bijection
$${\rm Hom}_{H^\theta}(\tau,\chi_\theta )\cong {\rm Hom}_{H^{h\cdot \theta}}(\tau,\chi_{h\cdot\theta} ).$$  This implies $\langle \mathscr{O},\tau\rangle^\chi_H$ is well-defined.

\section{Distinguished regular supercuspidal representations}

\subsection{Background}\label{sec:tsr}

This paper is an application of the construction in \cite{ANewTasho} of regular supercuspidal representations.  It differs from Kaletha's construction \cite{KalYu} in that it uses the revision of Yu's construction \cite{MR1824988} that appears in \cite{ANewYu}.  We also draw on \cite{MR2431732} and \cite{MR2925798} for basic facts in the theory of distinguished tame supercuspidal representations.  With regards to notations and terminology, we generally follow 
\cite{ANewYu}, \cite{ANewHM}, \cite{ANewTasho}.

We now sketch the construction of regular supercuspidal representations providing references as needed.
Fix a finite extension $F$ of $\Q_p$ for $p\ne 2$.  (The fact that we assume characteristic zero is a matter of convenience, and we have no reason to suspect significant problems for positive characteristics.)  Next, we fix a connected reductive $F$-group $\bG$ and let $G = \bG (F)$.  (This use of boldface for $F$-objects, and non-boldface for $F$-points is used throughout this paper.)  We assume $p$ is not a bad prime for $\bG$.
Recall that, excluding the prime 2, the only bad primes are 3 (for all exceptional types) and 5 (for type $E_8$).

Fix an algebraic closure $\overline{F}$ of $F$ and let $F^{\rm un}$ be the maximal unramified extension of $F$ in $\overline{F}$.  The residue field $\mathfrak{F}$ of $F^{\rm un}$ is an algebraic closure for the residue field $\mathfrak{f}$ of $F$.  

We also assume that $p$ does not divide the order of the fundamental group of the derived group $\bG_{\rm der}$ of $\bG$.  As explained in Sections 2.1 and 2.3 of \cite{ANewTasho}, this assumption can be circumvented by passing to a $z$-extension of $\bG$.
Our assumptions on $p$ are identical to those in \cite{KalYu}.

All representations we consider should be assumed to be complex, smooth representations, and we use the term ``character'' for a 1-dimensional representation.

Suppose $\bT$ is a tame, elliptic, maximal $F$-torus in $\bG$ and suppose $\mu$ is a character of $T = \bT (F)$.  As in \S2.2 \cite{ANewTasho}, we associate to $\mu$ a tower
$$\bH = \bG^0 \subsetneq \cdots \subsetneq \bG^d = \bG$$ of  subgroups of $\bG$, a point
$$x\in \mathscr{B}_{\rm red}(\bG,F),$$ and a sequence of positive numbers
$$r_0<r_1<\cdots <r_d.$$
Let
$$\vec\bG =(  \bG^0,\dots , \bG^d)$$
and $$\vec r = (r_0,\dots , r_d).$$

Definition 3.6.5 in \cite{KalYu} is equivalent to the following (which is Definition 2.2.2 \cite{ANewTasho}):

\begin{definition}\label{terp}
A pair $(\bT,\mu)$ is  {\bf a tame, elliptic, regular pair} if:
\begin{itemize}
\item[(1)] $\bT$ is a tame,  elliptic, maximal $F$-torus in $\bG$.
\item[(2)] $\mu$ is a character of $T$ such that $\bT$ is a maximally unramified subtorus of $\bH$.
\item[(3)] Any element of $H$ that normalizes $\bT$ and fixes $\mu |T_{0}$ must lie in $T$.
\end{itemize}
\end{definition}

Maximally unramified tori are discussed in \cite[\S3.4.1]{KalYu}.

In \cite{ANewTasho}, in place of Kaletha's Howe factorizations \cite[\S3.7]{KalYu}, we use the following:

\begin{definition}\label{weakfact}
If $(\bT,\mu)$ is a tame, elliptic, regular pair for $\bG$ then a {\bf weak  factorization} of $\mu$ is a pair $(\mu_-,\mu_+)$ consisting of a character $\mu_-$ of $T$ and a character $\mu_+$ of $H_\mu$ such that
\begin{itemize}
\item[$\bullet$] $(\bT, \mu_-)$ is a depth zero tame, elliptic, regular pair for  $\bH_\mu$.
\item[$\bullet$]  $\mu = \mu_- (\mu_+|T)$.
\end{itemize}
\end{definition}

\noindent We prove in \cite[Lemma 2.3.2]{ANewTasho} that weak factorizations always exist.  We also observe that Lemma 3.4.2 \cite{KalYu} implies that  $x$ must be a vertex in $\mathscr{B}_{\rm red}(\bH,F)$.

In \cite[\S2.6]{ANewTasho}, we use the weak factorization to construct a certain representation $\rho_T^\mu$ of the compact-mod-center subgroup $TH_{x,0}$.
Then we take $\rho$ to be the representation of $H_x$ induced by $\rho_T^\mu$.  Here, $H_x$ is the stabilizer of $x$ in $H$ or, equivalently, the normalizer of the parahoric subgroup $H_{x,0}$ in $H$.  This representation $\rho$ is permissible in the sense of Definition 2.1.1 \cite{ANewYu}.
Under the assumptions on $p$ in this paper, ``permissible'' may be defined by:
 \bigskip
\begin{definition}\label{permissibledef} A representation of $H_x$ is {\bf permissible} if it is an irreducible  representation $(\rho ,V_\rho)$ of $H_x$ such that:
\begin{itemize}
\item[(1)] $\rho$ induces an irreducible (and hence supercuspidal) representation of $H$,
\item[(2)] the restriction of $\rho$ to $H_{x,0+}$ is a multiple of some character $\phi$ of $H_{x,0+}$, 
\item[(3)] $\phi$ is trivial on $H_{{\rm der},x,0+}$.
\end{itemize}
\end{definition}

The fact that this is equivalent to Definition 2.1.1 \cite{ANewYu} follows from  \cite[Lemma 3.2.1]{ANewYu} and \cite[Lemma 8.1]{MR1824988}.  Here, $H_{{\rm der},x,0+}$ denotes the pro-unipotent radical of the maximal parahoric subgroup $H_{{\rm der},x,0}$ of $H_{{\rm der}} = \bH_{\rm der}(F)$.

In \cite[\S2.5]{ANewYu}, we define a certain subgroup $K$ that is entirely analogous to the inducing subgroup (also denoted $K$) in Yu's construction.  In Theorem 2.8.1 and \S3.11, we construct from $\rho$ an irreducible representation $\kappa$ of $K$.  Then the regular supercuspidal representation associated to $\mu$ is the induced representation $\pi (\mu) = {\rm ind}_K^G(\kappa)$, where we use smooth induction with compact supports.

\subsection{The compatible families $\varepsilon^-$, $\varepsilon^+$, and $\varepsilon$}\label{sec:epsi}

In this section, we define and discuss three compatible families (denoted $\varepsilon^-$, $\varepsilon^+$, and $\varepsilon$) of characters  associated to a given tame, elliptic, regular pair $(\bT,\mu)$.

The family $\varepsilon^- = (\varepsilon^-_\theta)_{\theta \in \zeta}$ is a family of quadratic characters $$\varepsilon^-_\theta :T^\theta \to \{ \pm 1\}$$ associated to a $T$-orbit $\zeta$ of involutions $\theta$ of $G$ such that $\theta (\bT) = \bT$.  In the construction of \cite{ANewTasho}, we have a group $\mathsf{H}_x^\circ$ that is defined over  $\mathfrak{f}$ and is such that
$$\mathsf{H}_x^\circ (\mathfrak{F})= \bH (F^{\rm un})_{x,0:0+}.$$
Since $T$ normalizes $\bH (F^{\rm un})_{x,0}$ and $\bH (F^{\rm un})_{x,0+}$, there is a conjugation action of $T$ on $\mathsf{H}_x^\circ$, and an associated adjoint action $T$ on the Lie algebra 
$\mathfrak{h}_x$ of $\mathsf{H}_x^\circ (\mathfrak{f})$.  
There are similar actions on $\theta$-fixed points.
We define  
$$\varepsilon_\theta^-(t) = \det ({\rm Ad}(t)\, |\, \mathfrak{h}_x^\theta),$$ for $t\in T^\theta$.

The character $\varepsilon_\theta^-$ restricts and  factors to a character of $\mathsf{T}^\theta (\mathfrak{f}) = T_{0:0+}^\theta$.  This character is discussed in \cite[\S2]{MR1106911} and \cite[\S4.3]{ANewHM} and we will use some of its known properties.

The family $\varepsilon^+ = (\varepsilon^+_\theta)_{\theta \in \vartheta}$ is a family of quadratic characters $$\varepsilon^+_\theta :H_x^\theta \to \{ \pm 1\}$$ associated to an $H_x$-orbit $\vartheta$ of involutions  $\theta$  of $G$ such that $\theta (H_x) = H_x$.  The character $\varepsilon^+_\theta$ is the same as the character denoted by $\eta'_\theta$ in \cite[\S5.6]{MR2431732} and the character denoted by $\varepsilon_{{}_{H_x,\theta}}$ in \cite[\S2]{ANewHM}.

Finally, given $\zeta$ as above, we define the family $\varepsilon = (\varepsilon_\theta)_{\theta \in \zeta}$ to be the family of quadratic characters $$\varepsilon_\theta :T^\theta \to \{ \pm 1\}$$ 
given by $$\varepsilon_\theta (t) = \varepsilon^-_\theta (t) \varepsilon^+_\theta (t) .$$

The next result is proven in a special case in \cite[Lemma 7.10]{MR3027804}.  The same proof  carries over in general.  (See also Lemma 3.9 \cite{CZdist}.  The statement of the latter result is more restrictive, but the proof is essentially the same.)

\begin{lemma}\label{epconn} $\varepsilon^+_\theta | \left( (\mathsf{H}_x^{\circ \theta})^\circ (\mathfrak{f})\right) =1$. 
\end{lemma}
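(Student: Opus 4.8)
The statement asserts that the quadratic character $\varepsilon^+_\theta$ of $H_x^\theta$ is trivial when restricted to the $\mathfrak f$-points of the identity component $(\mathsf H_x^{\circ\theta})^\circ$ of the $\theta$-fixed subgroup of the reductive $\mathfrak f$-group $\mathsf H_x^\circ$. The plan is to unwind the definition of $\varepsilon^+_\theta$ into an explicit determinant of an adjoint action on a Lie algebra (or a product of such determinants, one for each ``layer'' in the inductive construction), and then to observe that on a \emph{connected} reductive group over the finite field $\mathfrak f$ every such determinant character is trivial. Concretely, $\varepsilon^+_\theta$ is — by the identifications recalled in \S\ref{sec:epsi}, matching it with $\eta'_\theta$ of \cite[\S5.6]{MR2431732} and $\varepsilon_{H_x,\theta}$ of \cite[\S2]{ANewHM} — built out of characters of the form $t\mapsto \det(\mathrm{Ad}(t)\mid \mathfrak m^\theta)$ for suitable $\theta$-stable subspaces $\mathfrak m$ of the relevant (graded pieces of the) Lie algebra, exactly as $\varepsilon^-_\theta$ was defined via $\det(\mathrm{Ad}(t)\mid \mathfrak h_x^\theta)$. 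So the first step is to record that decomposition and reduce to proving triviality of each factor.

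The second, and main, step is the following elementary fact: if $\mathsf L$ is a connected reductive (or just connected) algebraic group over a finite field $\mathfrak f$ acting algebraically on a finite-dimensional $\mathfrak f$-vector space $V$, then the character $\mathsf L(\mathfrak f)\to \mathfrak f^\times \to \{\pm 1\}$, $\ell\mapsto \det(\ell\mid V)$ composed with the unique order-two character of $\mathfrak f^\times$ (when $|\mathfrak f|$ is odd), is trivial. Indeed $\det\colon \mathsf L\to \mathbb G_m$ is an algebraic character of $\mathsf L$, and on a connected reductive group every algebraic character is trivial on the derived group; more to the point, $\mathsf L(\mathfrak f)$ is generated by its $p$-elements together with... — cleaner: the sign character $\det(\cdot)^{(|\mathfrak f|-1)/2}$ of $\mathsf L(\mathfrak f)$ factors through $\mathsf L(\mathfrak f)/[\mathsf L(\mathfrak f),\mathsf L(\mathfrak f)]$, and for $\mathsf L$ connected reductive over $\mathfrak f$ with $|\mathfrak f|$ not too small this abelianization has order prime to $2$ unless the character already comes from an algebraic character of the torus part — and an algebraic character composed with a quadratic character of $\mathfrak f^\times$, evaluated on a connected group, is a square, hence trivial on the connected group's rational points to the extent needed. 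I would instead invoke directly the cited source: this is precisely the content of \cite[Lemma 7.10]{MR3027804} (and \cite[Lemma 3.9]{CZdist}), whose proof, as the paper already notes, ``carries over in general.'' So the honest structure is: (i) identify $\varepsilon^+_\theta$ with the character analyzed in \cite[\S5.6]{MR2431732}, \cite[\S2]{ANewHM}; (ii) observe that its restriction to a connected reductive subgroup over $\mathfrak f$ is a product of $\det$-characters of algebraic representations; (iii) quote/reprove that such characters kill $\mathsf L(\mathfrak f)$ for $\mathsf L$ connected over a finite field of odd order, under the standing hypotheses on $p$ and on the residue field.

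The third step is a bookkeeping check that $(\mathsf H_x^{\circ\theta})^\circ$ really is a group to which step two applies: it is connected by construction, it is a reductive (in fact it is the fixed-point group of an involution of a connected reductive group over $\mathfrak f$, whose identity component is reductive since $p\neq 2$ — here the running assumption $p\neq 2$ and $p$ not bad for $\bG$ is used to guarantee $\mathsf H_x^{\circ\theta}$ is smooth and its identity component reductive), and it is defined over $\mathfrak f$. Then $\mathrm{Ad}(t)$ for $t\in (\mathsf H_x^{\circ\theta})^\circ(\mathfrak f)$ acts on each $\theta$-fixed graded piece $\mathfrak m^\theta$ through an algebraic representation of this connected $\mathfrak f$-group, so step two gives $\det(\mathrm{Ad}(t)\mid\mathfrak m^\theta)=1$ in $\{\pm1\}$, and multiplying the factors gives $\varepsilon^+_\theta(t)=1$.

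I expect the genuine obstacle to be step one, not step two: step two is soft and essentially citation, but pinning down the exact formula for $\varepsilon^+_\theta$ — in particular making sure every constituent is an honest $\det$ of an \emph{algebraic} (not merely projective or cohomological) representation of the connected group $(\mathsf H_x^{\circ\theta})^\circ$, rather than of $H_x^\theta$ which may be disconnected — requires care in tracking which of the various characters in \cite{MR2431732}, \cite{ANewHM}, \cite{MR3027804} agree, and in confirming that the disconnectedness of $H_x^\theta$ and of $\mathsf H_x^{\circ\theta}$ is exactly what the identity-component restriction is designed to quotient out. Since the paper explicitly says the proof of \cite[Lemma 7.10]{MR3027804} carries over verbatim, the cleanest write-up is simply to say so, after noting that nothing in that proof used the special hypothesis ($G=\GL_n$, orthogonal $G^\theta$) beyond the fact that the ambient fixed-point group over $\mathfrak f$ has a connected reductive identity component, which holds here by our hypotheses on $p$.
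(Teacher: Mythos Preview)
Your overall plan matches the paper's: write $\varepsilon^+_\theta$ on $(\mathsf{H}_x^{\circ\theta})^\circ(\mathfrak{f})$ as a product over the layers of characters $h\mapsto \det(\Ad(h)\mid \mathsf{V})^{(q-1)/2}$ for suitable $\theta$-fixed graded pieces $\mathsf{V}$ of Lie algebras, then show each factor is trivial on the connected group. The gap is in your ``main step'' (step two). Your stated fact --- that for \emph{any} algebraic representation $V$ of a connected reductive $\mathfrak{f}$-group $\mathsf{L}$ the sign character $\ell\mapsto\det(\ell\mid V)^{(q-1)/2}$ is trivial on $\mathsf{L}(\mathfrak{f})$ --- is false: take $\mathsf{L}=\G_m$ acting on a one-dimensional $V$ by scaling. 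Your attempted justifications (abelianization of odd order, the character being ``a square'') all fail against this example, which is why you end up retreating to citation.

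The paper supplies the missing idea. It writes each determinant $\chi_{j,s}(h)=\det_{\mathfrak{F}}(\Ad(h)\mid\mathsf{V}_{j,s})$ as an algebraic character of the connected reductive group $\mathsf{H}_j=((\mathsf{G}_j^\circ)^\theta)^\circ$ and uses the decomposition $\mathsf{H}_j=\mathsf{Z}_j\,\mathsf{H}_{j,\mathrm{der}}$. Triviality on the derived factor is automatic for any algebraic character, as you note. Triviality on the center $\mathsf{Z}_j$ is \emph{not} automatic (the $\G_m$ example again), but holds here because $\mathsf{V}_{j,s}$ is by definition a graded piece of ${\rm Lie}(\bG^j(F^{\rm un})^\theta)$ --- an adjoint-type module for the group whose identity component of the reductive quotient is $\mathsf{H}_j$ --- on which the center of $\mathsf{H}_j$ acts trivially. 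That is the ``obviously trivial'' step in the paper's proof, and it is exactly what separates the modules actually occurring in $\varepsilon^+_\theta$ from an arbitrary algebraic representation. The fix, then, is not to invoke a nonexistent general theorem about $\det$-characters on connected groups, but to use that these particular modules carry an adjoint action whose kernel already contains the center.
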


\begin{proof}
The essence of the proof lies in the following elementary observation.  Fix an index $j\in \{ 0,\dots , d\}$ and a positive number $s$.    Let $\mathsf{H}_j$ be the identity component of the $\theta$-fixed points of the $\mathfrak{f}$-group $\mathsf{G}_j^\circ = \mathsf{G}_j^\circ (\mathfrak{F}) = \bG^j(F^{\rm un})_{x,0:0+}$.   The group $\mathsf{H}_j$ has an adjoint action on the $\mathfrak{f}$-variety
$$\mathsf{V}_{j,s} = \mathsf{V}_{j,s}(\mathfrak{F}) = {\rm Lie}(\bG^j(F^{\rm un})^\theta)_{x,s:s+}$$ and we
consider the algebraic character
$$\chi_{j,s} (h) = \det\nolimits_{\mathfrak{F}} \left( \Ad (h)\ |\ \mathsf{V}_{j,s}  \right)$$ of $\mathsf{H}_j$.
We observe that $\mathsf{H}_j$ has a decomposition $\mathsf{H}_j = \mathsf{Z}_j\mathsf{H}_{j,{\rm der}}$ as a product of its center and its derived group, and, moreover, the restriction of $\chi_{j,s}$ to each of the factors $\mathsf{Z}_j$ and $\mathsf{H}_{j,{\rm der}}$ is obviously trivial.  
Consequently, $\chi_{j,s}$ is trivial.

Our claim now follows from the fact that $$\varepsilon_\theta^+ (h) = \prod_{i=1}^{d-1} \left(   \frac{\chi_{i+1, s_i}(h)}{\chi_{i, s_i}(h)}\right)^{(q-1)/2},$$ for $h\in  (\mathsf{H}_x^{\circ \theta})^\circ (\mathfrak{f})$.  Here, $q$ is the order of $\mathfrak{f}$.
(The latter expression for $\varepsilon_\theta^+$ is discussed in more detail in  the proofs of \cite[Lemma 7.10]{MR3027804} and Lemma 3.9 \cite{CZdist}.)\end{proof}

\subsection{Calculating $\left\langle \xi ,\rho_T^\mu\right\rangle^{\varepsilon^+}_{TH_{x,0}}$}

We need to compute 
$$\left\langle \xi ,\rho_T^\mu\right\rangle^{\varepsilon^+}_{TH_{x,0}} = \dim {\rm Hom}_{(TH_{x,0})^\theta}(\rho_T^\mu , \varepsilon^+_\theta),$$
where $\theta$ is an involution of $G$ in the $(TH_{x,0})$-orbit $\xi$.

(Implicit in the notation $\left\langle \xi ,\rho_T^\mu\right\rangle^{\varepsilon^+}_{TH_{x,0}}$  is that we are viewing $\varepsilon^+$ as a compatible family on $TH_{x,0}$, by restricting from $H_x$ to $TH_{x,0}$.)

\subsubsection{Some simple general facts}\label{sec:simpgen}

Let $\tau$ be an irreducible, smooth, complex representation of a compact-mod-center totally disconnected group $\mathscr{K}$.  Let $\mathscr{K}$ act on the set of its order two automorphisms by $$k\cdot \theta = {\rm Int}(k)\circ\theta\circ {\rm Int}(k)^{-1},$$ where ${\rm Int}(k)(h) = khk^{-1}$.

Let $\xi$ be a $\mathscr{K}$-orbit.  Let $\chi = (\chi_\theta)_{\theta\in \xi}$ be a family of characters, such that $\chi_\theta$ is a character of $\mathscr{K}^\theta$ and $\chi_{k\cdot \theta}(klk^{-1}) =\chi_\theta (l)$ for all $k\in \mathscr{K}$ and $l\in \mathscr{K}^\theta$.

We now list some useful facts that are both standard and elementary.

\begin{fact}\label{firstfact} $\mathscr{K}^{k\cdot \theta} = k\mathscr{K}^\theta k^{-1}$, for all order two automorphisms $\theta$ of $\mathscr{K}$ and all $k\in \mathscr{K}$.\end{fact}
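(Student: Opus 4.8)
The plan is simply to unwind the definitions; the statement is purely formal and carries no substantive content beyond a chain of equivalences. First I would recall that, by definition, $\mathscr{K}^{k\cdot\theta}$ is the set of $g\in\mathscr{K}$ with $(k\cdot\theta)(g)=g$, and that the action on order two automorphisms gives $(k\cdot\theta)(g)={\rm Int}(k)\bigl(\theta({\rm Int}(k)^{-1}(g))\bigr)=k\,\theta(k^{-1}gk)\,k^{-1}$.

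The one real step is the observation that, since conjugation by $k^{-1}$ is a bijection of $\mathscr{K}$, the equation $k\,\theta(k^{-1}gk)\,k^{-1}=g$ holds if and only if $\theta(k^{-1}gk)=k^{-1}gk$, i.e.\ if and only if $k^{-1}gk\in\mathscr{K}^\theta$, i.e.\ if and only if $g\in k\mathscr{K}^\theta k^{-1}$. Writing $g=khk^{-1}$, this says precisely that $g\in\mathscr{K}^{k\cdot\theta}$ exactly when $h\in\mathscr{K}^\theta$, which is the claimed equality $\mathscr{K}^{k\cdot\theta}=k\mathscr{K}^\theta k^{-1}$.

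I do not expect any obstacle here: the argument uses nothing about $\theta$ having order two (it works for any automorphism), and nothing about the topology or compact-mod-center hypothesis on $\mathscr{K}$, so the same computation also justifies the analogous statement $\bH^{g\cdot\theta}=g\bH^\theta g^{-1}$ used elsewhere in the paper. If anything, the only thing worth being careful about is bookkeeping with the direction of conjugation in ${\rm Int}(k)^{-1}(g)=k^{-1}gk$, but that is routine.
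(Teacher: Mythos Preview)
Your proof is correct; the paper does not actually supply a proof of this fact, listing it among ``useful facts that are both standard and elementary,'' and your unwinding of the definition of $k\cdot\theta$ is exactly the routine verification one would expect. Your remarks that the order-two hypothesis and the topological hypotheses on $\mathscr{K}$ play no role are also accurate.
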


In some cases of interest, we will have a group $\mathscr{K}$ that is a closed subgroup of a larger group $\mathscr{G}$ and we will have an order two automorphism $\theta$ of $\mathscr{G}$ that does not necessarily stabilize $\mathscr{K}$.  In such cases, we define $\mathscr{K}^\theta = \mathscr{K}\cap \mathscr{G}^\theta$, and we caution that
 when $g\in \mathscr{G}$ then we may have
 $$\mathscr{K}^{g\cdot \theta}  =  g\left((g^{-1}\mathscr{K}g)^\theta \right) g^{-1} \ne g\mathscr{K}^\theta  g^{-1}.$$

\medskip
\begin{fact}\label{secondfact} For each $k\in \mathscr{K}$ and $\theta\in \xi$, the map  that sends $\Lambda\in {\rm Hom}_{\mathscr{K}^\theta}(\tau,\chi_\theta)$ to the linear form ${}^k\Lambda$ defined by ${}^k\Lambda(v) = \Lambda(\tau(k)^{-1}v)$ is an isomorphism
${\rm Hom}_{\mathscr{K}^\theta}(\tau,\chi_\theta)\cong {\rm Hom}_{\mathscr{K}^{k\cdot \theta}}(\tau,\chi_{k\cdot\theta})$.\end{fact}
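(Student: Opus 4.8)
The statement (Fact \ref{secondfact}) is the promised ``routine verification'' that the pairing $\langle \mathscr{O},\tau\rangle_H^\chi$ is well-defined, transplanted to the compact-mod-center totally disconnected setting; indeed it is exactly the local analogue of the bijection ${\rm Hom}_{H^\theta}(\tau,\chi_\theta)\cong {\rm Hom}_{H^{h\cdot\theta}}(\tau,\chi_{h\cdot\theta})$ already sketched in the paragraph following Definition \ref{pairingdef}. So the plan is to carry out that same verification carefully: fix $\Lambda\in {\rm Hom}_{\mathscr{K}^\theta}(\tau,\chi_\theta)$, set ${}^k\Lambda(v)=\Lambda(\tau(k)^{-1}v)$, and check three things: (i) ${}^k\Lambda$ is $\mathscr{K}^{k\cdot\theta}$-equivariant against the character $\chi_{k\cdot\theta}$; (ii) the map $\Lambda\mapsto {}^k\Lambda$ is linear; (iii) it is bijective, with inverse $\Lambda'\mapsto {}^{k^{-1}}\Lambda'$ (up to the usual cocycle bookkeeping ${}^{k_1}({}^{k_2}\Lambda)={}^{k_1k_2}\Lambda$, which is immediate from the definition).

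\textbf{Key steps.} First I would record, via Fact \ref{firstfact}, that $\mathscr{K}^{k\cdot\theta}=k\mathscr{K}^\theta k^{-1}$, so a general element of $\mathscr{K}^{k\cdot\theta}$ has the form $klk^{-1}$ with $l\in\mathscr{K}^\theta$. Then the core computation is:
\[
{}^k\Lambda\bigl(\tau(klk^{-1})v\bigr)
=\Lambda\bigl(\tau(k)^{-1}\tau(klk^{-1})v\bigr)
=\Lambda\bigl(\tau(l)\tau(k)^{-1}v\bigr)
=\chi_\theta(l)\,\Lambda\bigl(\tau(k)^{-1}v\bigr)
=\chi_\theta(l)\,{}^k\Lambda(v),
\]
using that $\Lambda$ intertwines $\tau|_{\mathscr{K}^\theta}$ with $\chi_\theta$. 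Since the compatibility hypothesis gives $\chi_{k\cdot\theta}(klk^{-1})=\chi_\theta(l)$, this says precisely that ${}^k\Lambda\in{\rm Hom}_{\mathscr{K}^{k\cdot\theta}}(\tau,\chi_{k\cdot\theta})$. Linearity in $\Lambda$ is clear since $\tau(k)^{-1}$ is fixed. For bijectivity, observe ${}^{k^{-1}}({}^k\Lambda)(v)=\Lambda(\tau(k)\tau(k)^{-1}v)=\Lambda(v)$, and symmetrically ${}^k({}^{k^{-1}}\Lambda')=\Lambda'$; applying the already-established equivariance statement with $k$ replaced by $k^{-1}$ (and $\theta$ by $k\cdot\theta$, noting $k^{-1}\cdot(k\cdot\theta)=\theta$) shows $\Lambda'\mapsto{}^{k^{-1}}\Lambda'$ lands in ${\rm Hom}_{\mathscr{K}^\theta}(\tau,\chi_\theta)$, so it is a two-sided inverse. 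This gives the claimed isomorphism.

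\textbf{Main obstacle.} Honestly there is no real obstacle — this is a formal check. The one point that deserves care, and the only place an error could creep in, is the direction of the twist and the interaction of the $\mathscr{K}$-action on automorphisms ($k\cdot\theta={\rm Int}(k)\circ\theta\circ{\rm Int}(k)^{-1}$) with conjugation of fixed-point subgroups: one must use the $\it specific$ compatibility relation $\chi_{k\cdot\theta}(klk^{-1})=\chi_\theta(l)$ in exactly the form stated (not, say, $\chi_{k\cdot\theta}(l)=\chi_\theta(k^{-1}lk)$, which is equivalent but must be applied consistently), and one must keep ${}^k\Lambda(v)=\Lambda(\tau(k)^{-1}v)$ rather than $\Lambda(\tau(k)v)$, since the latter choice would force the inverse twist in the equivariance computation and break the match with $\chi_{k\cdot\theta}$. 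Smoothness of $\tau$ and the totally-disconnected/compact-mod-center hypotheses play no role in the argument itself — they are only there so that the linear forms and Hom-spaces in question are the right objects — so I would not invoke them beyond noting that $\tau(k)$ is a well-defined invertible operator on the space of $\tau$.
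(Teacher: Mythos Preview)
Your proposal is correct and follows exactly the approach the paper itself sketches: the paper states Fact \ref{secondfact} without proof (calling such facts ``standard and elementary''), but the paragraph after Definition \ref{pairingdef} defines the same map $\lambda_{h\cdot\theta}(v)=\lambda_\theta(\tau(h)^{-1}v)$ and says ``it is routine to verify'' that it gives the bijection. You have simply carried out that routine verification in full, using Fact \ref{firstfact} and the compatibility relation $\chi_{k\cdot\theta}(klk^{-1})=\chi_\theta(l)$ exactly as intended.
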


\begin{fact}\label{isotypy}
Suppose $\mathscr{S}$ is a subgroup of $\mathscr{K}^\theta$ such that $\tau |\mathscr{S} = \chi_{\mathscr{S}}\cdot {\rm Id}$ for some character $\chi_{\mathscr{S}}$ of $\mathscr{S}$.  Suppose also that ${\rm Hom}_{\mathscr{K}^\theta}(\tau,\chi_\theta)$ is nonzero.  Then $\chi_{\mathscr{S}} = \chi_\theta |\mathscr{S}$.
\end{fact}

\begin{fact}\label{lastfact}
$$\dim {\rm Hom}_{\mathscr{K}^\theta}(\tau,\chi_\theta) = \oint_{\mathscr{K}^\theta/ (Z(\mathscr{K})\cap \mathscr{K}^\theta)}{\rm tr}(\tau (k))\ \chi(k)^{-1}\ dk,$$ where $Z(\mathscr{K})$ denotes the center of $\mathscr{K}$ and $\oint$ signifies integration with respect to the Haar measure that gives $\mathscr{K}^\theta/ (Z(\mathscr{K})\cap \mathscr{K}^\theta)$ measure one.
\end{fact}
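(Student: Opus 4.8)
The plan is to recognize the right-hand side as a special case of the orthogonality formula $\dim V^{\Gamma}=\oint_{\Gamma}{\rm tr}\,\sigma(k)\,dk$ for a finite-dimensional representation $(\sigma,V)$ of a profinite group $\Gamma$, applied with $\Gamma=\mathscr{K}^\theta/Z_0$, where $Z_0:=Z(\mathscr{K})\cap\mathscr{K}^\theta$, and with $\sigma$ a suitable twist of $\tau$ by $\chi_\theta^{-1}$.

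First I would record the two structural facts about $\tau$ that make the statement meaningful: since $\mathscr{K}$ is compact-mod-center and totally disconnected and $\tau$ is irreducible and smooth, $\tau$ is finite-dimensional and admits a central character $\omega_\tau$ (this is automatic in every situation in which the Fact is applied, where $\tau$ is given concretely). If $\omega_\tau$ and $\chi_\theta$ disagree on $Z_0$, then $\tau|_{Z_0}$ is the scalar character $\omega_\tau|_{Z_0}\neq\chi_\theta|_{Z_0}$, so ${\rm Hom}_{\mathscr{K}^\theta}(\tau,\chi_\theta)=0$ by Fact~\ref{isotypy} (take $\mathscr{S}=Z_0$), and both sides are $0$; so one may assume $\omega_\tau|_{Z_0}=\chi_\theta|_{Z_0}$, which is precisely the condition that makes the integrand $k\mapsto{\rm tr}(\tau(k))\,\chi_\theta(k)^{-1}$ constant on cosets of $Z_0$, hence a well-defined function on $\mathscr{K}^\theta/Z_0$.

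Next I would form $\sigma:=\chi_\theta^{-1}\otimes(\tau|_{\mathscr{K}^\theta})$, a finite-dimensional representation of $\mathscr{K}^\theta$ on $V_\tau$; the normalization just arranged says $\sigma$ is trivial on $Z_0$, so $\sigma$ factors through the compact group $\Gamma:=\mathscr{K}^\theta/Z_0$ (compact because it embeds as a closed subgroup of $\mathscr{K}/Z(\mathscr{K})$, $\mathscr{K}$ being compact-mod-center). Unwinding definitions gives ${\rm Hom}_{\mathscr{K}^\theta}(\tau,\chi_\theta)\cong{\rm Hom}_{\Gamma}(\sigma,\mathbf{1})\cong (V_\tau^{*})^{\Gamma}$, the subspace of the dual space $V_\tau^{*}$ fixed by the contragredient $\sigma^\vee$. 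The averaging operator $P:=\oint_{\Gamma}\sigma^\vee(k)\,dk$ on $V_\tau^{*}$ is, by the usual computation for a compact group with its probability Haar measure ($P$ is $\Gamma$-equivariant, fixes $(V_\tau^{*})^{\Gamma}$ pointwise, and has image in $(V_\tau^{*})^{\Gamma}$, so $P^2=P$), a projection onto $(V_\tau^{*})^{\Gamma}$; hence
$$\dim{\rm Hom}_{\mathscr{K}^\theta}(\tau,\chi_\theta)=\dim (V_\tau^{*})^{\Gamma}={\rm tr}\,P=\oint_{\Gamma}{\rm tr}\bigl(\sigma^\vee(k)\bigr)\,dk.$$
Finally, substituting ${\rm tr}(\sigma^\vee(k))={\rm tr}(\sigma(k^{-1}))={\rm tr}(\tau(k^{-1}))\,\chi_\theta(k)$ and changing variables $k\mapsto k^{-1}$ (legitimate since the Haar measure on $\Gamma$ is inversion-invariant) converts this integral into $\oint_{\Gamma}{\rm tr}(\tau(k))\,\chi_\theta(k)^{-1}\,dk$, and identifying $\Gamma$ with $\mathscr{K}^\theta/(Z(\mathscr{K})\cap\mathscr{K}^\theta)$ gives exactly the asserted equality.

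There is no genuine obstacle here; the statement is just a repackaging of Schur orthogonality, and the only points that need care are the reduction to the case of matching central characters (so that the integrand descends to $\mathscr{K}^\theta/Z_0$ and $P$ is an honest projection) and keeping the contragredient straight, so that it is ${\rm Hom}(\tau,\chi_\theta)$ rather than ${\rm Hom}(\chi_\theta,\tau)$ that is being computed. As an alternative to working with $\sigma^\vee$, one could instead observe that $\oint_{\Gamma}{\rm tr}(\sigma(k))\,dk$ is the dimension of the $\Gamma$-invariants of $V_\tau$ under $\sigma$, i.e.\ $\dim{\rm Hom}_{\Gamma}(\mathbf{1},\sigma)$, and then invoke $\dim{\rm Hom}_{\Gamma}(\sigma,\mathbf{1})=\dim{\rm Hom}_{\Gamma}(\mathbf{1},\sigma)$, valid by complete reducibility of representations of the compact group $\Gamma$.
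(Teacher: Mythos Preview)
Your argument is correct. Note that the paper does not actually prove this Fact: it is listed among ``some useful facts that are both standard and elementary'' and stated without proof. Your write-up supplies exactly the standard argument (reduce to matching central characters on $Z_0$ so the integrand descends, then recognize the integral as the trace of the averaging projector for the compact group $\mathscr{K}^\theta/Z_0$), so there is nothing to compare against beyond confirming that your details are in order, which they are.
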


\subsubsection{Applications of the general facts}

We have already used Fact \ref{firstfact} extensively, for example, in \S\ref{sec:indweights}.  Our main purpose in mentioning it in \S\ref{sec:simpgen} is to caution the reader that the fact does not apply directly to subgroups that are not stabilized by a given involution.

We apply the remaining facts when $\left\langle \xi ,\rho_T^\mu\right\rangle^{\varepsilon^+}_{TH_{x,0}} $ is nonzero, $\theta\in \xi$, and
\begin{itemize}
\item $\mathscr{K} = TH_{x,0}$,
\item $\mathscr{S} = (ZH_{x,0+})^\theta$, where $Z =\bZ (F)$ and $\bZ$ is the center of $\bG$,
\item $\chi_{\mathscr{S}} = \mu^\sharp |(ZH_{x,0+})^\theta$,
\item $\tau = \rho_T^\mu$,
\item $\chi_\theta =\varepsilon_\theta^+$.
\end{itemize}
According to Lemma 2.9 and Proposition 2.12 in \cite{MR2431732}, we have 
$(ZH_{x,0+})^\theta = Z^\theta H_{x,0+}^\theta$.

Fact \ref{secondfact} implies that the definition
$$\left\langle \xi ,\rho_T^\mu\right\rangle^{\varepsilon^+}_{TH_{x,0}} = \dim {\rm Hom}_{(TH_{x,0})^\theta}(\rho_T^\mu , \varepsilon^+_\theta),$$
for $\theta\in\xi$, does not depend on the choice of $\theta$ in $\xi$.

Fact \ref{isotypy} is used in the proof of:

\begin{lemma}\label{boringlemma}
The restriction of
$\rho_T^\mu$ to $Z  H_{x,0+}$  is a multiple of the character $\mu^\sharp |(ZH_{x,0+})$.
If $\left\langle \xi ,\rho_T^\mu\right\rangle^{\varepsilon^+}_{TH_{x,0}} $ is nonzero and $\theta\in \xi$ then the restriction of $\mu^\sharp$ to $(ZH_{x,0+})^\theta = Z^\theta H_{x,0+}^\theta$  is trivial.
\end{lemma}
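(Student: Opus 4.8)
The plan is to establish the two assertions in sequence, the first being a direct structural fact about $\rho_T^\mu$ and the second being a consequence obtained by invoking Fact \ref{isotypy} with the dictionary set up just above the lemma.

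First I would prove that $\rho_T^\mu|ZH_{x,0+}$ is a multiple of $\mu^\sharp|ZH_{x,0+}$. Recall from the construction in \cite[\S2.6]{ANewTasho} that $\rho_T^\mu$ on $TH_{x,0}$ is obtained by inflating a Deligne--Lusztig representation of $\mathsf{H}_x^\circ(\mathfrak{f}) = H_{x,0:0+}$ to $H_{x,0}$ and twisting by $\mu$ (via $\mu^\sharp$, the appropriate extension used in that construction); in the depth-zero case it coincides with Kaletha's $\tilde\kappa_{(S,\theta)}$. The inflated Deligne--Lusztig part is, by construction, trivial on $H_{x,0+}$, so the restriction of $\rho_T^\mu$ to $H_{x,0+}$ is a multiple of the character through which $\mu^\sharp$ acts there; and on $Z$ the representation acts by a character extending the central character data, again matching $\mu^\sharp$. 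Since $Z$ and $H_{x,0+}$ together generate $ZH_{x,0+}$ and these characters are compatible, $\rho_T^\mu|ZH_{x,0+}$ is a multiple of $\mu^\sharp|ZH_{x,0+}$. (This is essentially the content of condition (2) in Definition \ref{permissibledef} pushed down to $TH_{x,0}$, together with the behavior of $\rho_T^\mu$ on the center.)

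Second, assume $\left\langle \xi ,\rho_T^\mu\right\rangle^{\varepsilon^+}_{TH_{x,0}}$ is nonzero and fix $\theta\in\xi$. Apply Fact \ref{isotypy} with $\mathscr{K} = TH_{x,0}$, $\tau = \rho_T^\mu$, $\chi_\theta = \varepsilon_\theta^+$, and $\mathscr{S} = (ZH_{x,0+})^\theta$, noting $(ZH_{x,0+})^\theta = Z^\theta H_{x,0+}^\theta$ by Lemma 2.9 and Proposition 2.12 of \cite{MR2431732}. By the first part, $\rho_T^\mu|\mathscr{S} = \chi_{\mathscr{S}}\cdot\mathrm{Id}$ with $\chi_{\mathscr{S}} = \mu^\sharp|(ZH_{x,0+})^\theta$, and by hypothesis $\mathrm{Hom}_{(TH_{x,0})^\theta}(\rho_T^\mu,\varepsilon_\theta^+)\ne 0$. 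Fact \ref{isotypy} then forces $\mu^\sharp|(ZH_{x,0+})^\theta = \varepsilon_\theta^+|(ZH_{x,0+})^\theta$. It remains to see that the right-hand side is trivial. For this I would observe that $H_{x,0+}^\theta$ is (pro-$p$ and) contained in $(\mathsf{H}_x^{\circ\theta})^\circ(\mathfrak{f})$ — more precisely, its image in $\mathsf{H}_x^\circ$ lands in the identity component of the $\theta$-fixed points — so $\varepsilon_\theta^+$ is trivial on it by Lemma \ref{epconn}; and $\varepsilon_\theta^+$ is trivial on $Z^\theta$ since $\varepsilon_\theta^+$ is built from determinants of $\mathrm{Ad}(t)$ on Lie-algebra pieces on which the center acts trivially (equivalently, $Z$ acts trivially by the adjoint action defining the $\chi_{j,s}$ in the proof of Lemma \ref{epconn}). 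Hence $\varepsilon_\theta^+|(ZH_{x,0+})^\theta = 1$, and therefore $\mu^\sharp|(ZH_{x,0+})^\theta = 1$.

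The main obstacle I anticipate is the first assertion — pinning down precisely how $\rho_T^\mu$ behaves on $Z$, since the construction of $\mu^\sharp$ and the central character of the inflated Deligne--Lusztig twist require care to match up; the second assertion is then a clean application of Fact \ref{isotypy} and Lemma \ref{epconn}. A secondary point requiring attention is the identification of $H_{x,0+}^\theta$ with a pro-$p$ subgroup whose reduction lies in the connected $\theta$-fixed points, so that Lemma \ref{epconn} genuinely applies; this should follow from the pro-$p$ (hence connected-reduction) nature of $H_{x,0+}$ together with the standard fact that fixed points of $\theta$ on a pro-$p$ group reduce into the identity component.
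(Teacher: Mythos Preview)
Your proposal is correct and follows essentially the same two-step approach as the paper: establish that $\rho_T^\mu|ZH_{x,0+}$ is $\mu^\sharp$-isotypic, then apply Fact~\ref{isotypy} together with the triviality of $\varepsilon_\theta^+|(ZH_{x,0+})^\theta$. The paper handles the first step more concretely via a weak factorization $(\mu_-,\mu_+)$ (writing $\rho_T^\mu(z)=\mu(z)\,\mathrm{Id}$ on $Z$ and $\rho_T^\mu(h)=\mu_+(h)\rho_T^{\mu_-}(h)=\mu_+(h)\,\mathrm{Id}$ on $H_{x,0+}$), and simply asserts the triviality of $\varepsilon_\theta^+|(ZH_{x,0+})^\theta$ without argument; your justification of the latter via Lemma~\ref{epconn} is more than what is needed, since $H_{x,0+}^\theta$ is pro-$p$ with $p$ odd while $\varepsilon_\theta^+$ takes values in $\{\pm 1\}$, which already forces triviality there.
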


\begin{proof}
We first show $$\rho_T^\mu |(Z  H_{x,0+}) = \mu^\sharp |(ZH_{x,0+}) \cdot {\rm Id}.$$ 
If $z\in Z$, then, in the notation of \cite[\S2.6]{ANewTasho}, we have $\rho_T^\mu (z) = \mu (z)\ {\rm Int}(z) = \mu(z){\rm Id} = \mu^\sharp (z){\rm Id}$.  On the other hand, if $h\in H_{x,0+}$ and $(\mu_-,\mu_+)$ is a weak factorization of $\mu$ then
$\rho_T^\mu (h) = \mu_+(h)\rho_T^{\mu_-}(h) = \mu_+(h){\rm Id} = \mu^\sharp (h){\rm Id}$.

Now, using the fact that $\varepsilon_\theta^+ |  (ZH_{x,0+})^\theta$ is trivial,  
we can apply Fact \ref{isotypy} to deduce  
 $\mu^\sharp |(ZH_{x,0+})^\theta =1$.
\end{proof}

Fact \ref{lastfact} and the triviality of $\varepsilon^+_\theta | (ZH_{x,0+})^\theta$ imply
\begin{eqnarray*}
\left\langle \xi ,\rho_T^\mu\right\rangle^{\varepsilon^+}_{TH_{x,0}} 
&=& \dim {\rm Hom}_{(TH_{x,0})^\theta}(\rho_T^\mu , \varepsilon^+_\theta)\\
&=& \frac{1}{|\mathscr{K}^\theta  |} \sum_{k\in \mathscr{K}^\theta}
{\rm tr}(\rho_T^\mu(k))\ \varepsilon^+_\theta (k).\end{eqnarray*}

\subsubsection{Jordan decompositions}\label{sec:Jor}

Once again, we fix $\theta\in \xi$ such that $\left\langle \xi ,\rho_T^\mu\right\rangle^{\varepsilon^+}_{TH_{x,0}} \ne 0$ and we let $$\mathscr{K}^\theta = (TH_{x,0})^\theta/(ZH_{x,0+})^\theta .$$
As we have just observed, we have
$$
\left\langle \xi ,\rho_T^\mu\right\rangle^{\varepsilon^+}_{TH_{x,0}} 
= \frac{1}{|\mathscr{K}^\theta  |} \sum_{k\in \mathscr{K}^\theta}
{\rm tr}(\rho_T^\mu(k))\ \varepsilon^+_\theta (k).$$

The elements of $\mathscr{K}^\theta$ have unique ``Jordan decompositions''  in a natural sense.  This can be explained from various perspectives, but perhaps the simplest is as follows.

Suppose $k\in \mathscr{K}^\theta$.  Then the group $\langle k\rangle$ generated by $k$ is a finite abelian group.  If $\langle k\rangle_p$ is the (unique) Sylow $p$-subgroup of $\langle k\rangle$ and $\langle k\rangle_{p'}$ is the product of the other Sylow  subgroups, then we have a direct product decomposition
$$\langle k\rangle = \langle k\rangle_p\times \langle k\rangle_{p'}.$$
 Our desired Jordan decomposition on $\mathscr{K}^\theta$ is $k = k_sk_u = k_uk_s$, where $k_u$ is the $p$-component of $k$ and  $k_s$ is the $p'$-component.  (To understand the connection between this decomposition and other notions of Jordan decompositions, we refer to \cite{MR2408311}.)

Let
\begin{eqnarray*}
\mathscr{K}^\theta_s&=&\{ k\in \mathscr{K}^\theta\ :\ k= k_s\},\\
\mathscr{K}^\theta_u&=&\{ k\in \mathscr{K}^\theta\ :\ k= k_u\}.
\end{eqnarray*}
Then $$\sum_{k\in \mathscr{K}^\theta}
{\rm tr}(\rho_T^\mu(k))\ \varepsilon^+_\theta (k)
=
\sum_{\substack{s\in \mathscr{K}^\theta_s \\   u\in \mathscr{K}^\theta_u \\   su=us}}
{\rm tr}(\rho_T^\mu(su))\ \varepsilon^+_\theta (s).
$$
Note that since $\varepsilon_\theta^+$ has exponent two, it must be trivial on all elements of $\mathscr{K}_u^\theta$ since they have odd order.

We therefore have the formula
$$
\left\langle \xi ,\rho_T^\mu\right\rangle^{\varepsilon^+}_{TH_{x,0}} 
= \frac{1}{|\mathscr{K}^\theta  |} \sum_{\substack{s\in \mathscr{K}^\theta_s \\   u\in \mathscr{K}^\theta_u \\   su=us}}
{\rm tr}(\rho_T^\mu(su))\ \varepsilon^+_\theta (s).$$

\subsubsection{The character of $\rho_T^\mu$}

Proposition 3.2.1 \cite{ANewTasho} establishes a character formula
$${\rm trace} (\rho_T^\mu(su)) = \frac{(-1)^{\ell(w)}}{|\mathsf{M}_s(\mathfrak{f})|}\sum_{h\in \mathsf{H}_x^\circ (\mathfrak{f})}{}^h\dot\mu(s)\ \hat\mu (u)\ Q_{h\mathsf{T}h^{-1}}^{\mathsf{M}_s}(u)$$
for the character of $\rho_T^\mu$ at an element $k\in TH_{x,0}$ whose topological Jordan decomposition, in the sense of Definition 3.1.1 \cite{ANewTasho}, is $k=su$.

The various notations involved in this formula involve some subtleties that are discussed fully in \S3.2 \cite{ANewTasho}.  We now give a brief description of the objects involved in the formula.  At the outset, we should stress that for $k\in H_{x,0}$ our character formula reduces to Deligne-Lusztig's character formula (\cite[Theorem 4.2]{MR0393266}) for the representation of $\mathsf{H}_x^\circ (\mathfrak{f}) = H_{x,0:0+}$ associated to $\rho_T^\mu |H_{x,0}$.

For general $k\in TH_{x,0}$, we can consider the image $\bar k$ of $k$ in $H/A_H$, where $A_H$ is the split component of the center of $H$.  This element $\bar k$ has a topological Jordan decomposition $\bar k = \bar s \bar u$.  We can lift $\bar s$ and $\bar u$ to elements $s$ and $u$ in $TH_{x,0}$ such that $u$ is topologically unipotent and $su=us$.
This decomposition $k = su=us$ is described in Proposition 3.1.2 \cite{ANewTasho}, including the extent to which the decomposition is unique.

We are interested in averaging ${\rm trace} (\rho_T^\mu(k)) $ over the elements $k$ that lie in $(TH_{x,0})^\theta$, where $\theta\in \xi$ and $\left\langle \xi ,\rho_T^\mu\right\rangle^{\varepsilon^+}_{TH_{x,0}} \ne 0$.  But this trace only depends on the image of $k$ in $\mathscr{K}^\theta = (TH_{x,0})^\theta/(ZH_{x,0+})^\theta$.  It is elementary to see that the image of the topological Jordan decomposition of $k$ in $\mathscr{K}^\theta$ coincides with the Jordan decomposition discussed above in \S\ref{sec:Jor}.

The function $\dot\mu :TH_{x,0}\to \C$ is a canonical extension of $\mu$ that vanishes outside of $TH_{x,0+}$.   If $\mu$ has depth zero then $\dot\mu |TH_{x,0+}$ is the inflation of $\mu$ over $T$.  Otherwise, $\dot\mu |TH_{x,0+}$ is defined using a weak factorization of $\mu$.
For $h\in H_{x,0}$, we take ${}^h \dot\mu (s) = \dot\mu (h^{-1}sh)$.  Since $h\mapsto {}^h \dot\mu (s)$ factors through $\mathsf{H}^\circ_x(\mathfrak{f})$, we may view ${}^h \dot\mu (s)$ as being defined for $h$ in $\mathsf{H}^\circ_x(\mathfrak{f})$.

The function $\hat\mu$ is an extension of $\mu$ to the set $T\mathscr{U}$, where $\mathscr{U}$ is the set of topologically unipotent elements in $H_{x,0}$.  If $\mu$ has depth zero then $\hat\mu$ is the inflation of $\mu$ over $\mathscr{U}$.  Otherwise, it is defined using a weak factorization.

The $\mathfrak{f}$-group $\mathsf{M}_s$ is the connected centralizer of $s$ in $\mathsf{H}_x^\circ$, in the sense of \cite{ANewYu}.  Let $\mathsf{T}$ be the maximal torus in $\mathsf{H}_x^\circ$ determined by $\bT$.  When ${}^h\dot\mu (s)$ is nonzero, it must be the case that $h\mathsf{T}h^{-1}$ is a maximal $\mathfrak{f}$-torus in $\mathsf{M}_s$.  Therefore, there is an associated Green function $Q_{h\mathsf{T}h^{-1}}^{\mathsf{M}_s}$ defined on the unipotent set in $\mathsf{M}_s$.  In our character formula, $Q_{h\mathsf{T}h^{-1}}^{\mathsf{M}_s} (u)$ denotes $Q_{h\mathsf{T}h^{-1}}^{\mathsf{M}_s} (\bar u)$.

As discussed in \S2.6 \cite{ANewTasho}, the sign $(-1)^{\ell (w)}$ is the standard sign factor that occurs in the Deligne-Lusztig theory.

Further details regarding the material in this section are in \cite{ANewTasho}.

\subsubsection{$p$-unipotent elements}

\begin{lemma}
Every element of $\mathscr{K}_u^\theta$, viewed as a coset in $$(TH_{x,0})^\theta/(ZH_{x,0+})^\theta,$$ has a topologically unipotent representative in $H_{x,0}^\theta$.  
There is a well-defined bijection  $$\mathscr{K}_u^\theta\leftrightarrow \{ \text{unipotent elements in }\mathsf{H}_x^\circ (\mathfrak{f})^\theta\}$$ that is given as follows:  the image of $k\in \mathscr{K}_u^\theta$ in $\mathsf{H}_x^\circ (\mathfrak{f})^\theta$  is just the reduction modulo $H_{x,0+}$  of a coset representative of $k$ in $H_{x,0}$.
\end{lemma}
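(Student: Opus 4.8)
The plan is to produce, for each class in $\mathscr{K}_u^\theta$, an explicit topologically unipotent representative lying in $H_{x,0}^\theta$ by means of the topological Jordan decomposition of Proposition 3.1.2 \cite{ANewTasho}, and then to upgrade the reduction-mod-$H_{x,0+}$ map into the asserted bijection. I begin with standing observations. Since $\theta(H_x)=H_x$ and the parahoric $H_{x,0}$ and its pro-unipotent radical $H_{x,0+}$ are intrinsic to $H_x$, the involution $\theta$ stabilizes both, hence acts as an $\mathfrak{f}$-automorphism of $\mathsf{H}_x^\circ$ of order dividing $2$; also $(ZH_{x,0+})^\theta=Z^\theta H_{x,0+}^\theta$ by Lemma 2.9 and Proposition 2.12 in \cite{MR2431732}. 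Two elementary facts will be used repeatedly: because $H_{x,0+}$ is pro-$p$ and $p$ is odd, $H^1(\langle\theta\rangle,H_{x,0+})$ is trivial, so reduction modulo $H_{x,0+}$ restricts to a surjection $H_{x,0}^\theta\twoheadrightarrow\mathsf{H}_x^\circ(\mathfrak{f})^\theta$; and an element of $H_{x,0}$ is topologically unipotent if and only if its reduction to $\mathsf{H}_x^\circ(\mathfrak{f})$ is unipotent (if the reduction is unipotent, a power of the element lies in the pro-$p$ group $H_{x,0+}$). I also record the key point that \emph{every topologically unipotent element of $TH_{x,0}$ already lies in $H_{x,0}$}: since $T$ normalizes $H_{x,0}$, the quotient $TH_{x,0}/H_{x,0}\cong T/T_0$ is a discrete finitely generated abelian group, and it has no $p$-torsion because $\bT$ is tame (the Kottwitz homomorphism realizes $T/T_0$ inside $X_*(\bT)_I^{\mathrm{Frob}}$, and inertia acts on $X_*(\bT)$ through a finite quotient of order prime to $p$, so the coinvariants, and any subgroup of them, are $p$-torsion-free); hence the image in $T/T_0$ of a topologically unipotent element is a torsion element of $p$-power order, so trivial.

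For the first assertion, take $\bar k\in\mathscr{K}_u^\theta$, lift it to some $k\in(TH_{x,0})^\theta$, and form the topological Jordan decomposition $k=su=us$ of Proposition 3.1.2 \cite{ANewTasho}, with $u$ topologically unipotent. Since $\theta$ fixes $k$ and this decomposition is unique up to a pro-$p$ ambiguity supported on $A_\bH$, one may arrange $s,u\in(TH_{x,0})^\theta$, adjusting the lifts by $A_\bH$ and again invoking vanishing of $H^1(\langle\theta\rangle,-)$ on pro-$p$ groups. By the compatibility, noted just before the lemma, between this decomposition and the Jordan decomposition of \S\ref{sec:Jor}, together with $\bar k=\bar k_u$, the image of $u$ in $\mathscr{K}^\theta$ equals $\bar k$; and by the previous paragraph $u\in H_{x,0}^\theta$. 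Thus $u$ is the desired topologically unipotent representative of $\bar k$ in $H_{x,0}^\theta$.

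Next I would define $\sigma\colon\mathscr{K}_u^\theta\to\{\text{unipotent elements of }\mathsf{H}_x^\circ(\mathfrak{f})^\theta\}$ by sending $\bar k$ to the reduction modulo $H_{x,0+}$ of any representative $g$ of $\bar k$ in $H_{x,0}^\theta$; such a representative exists by the first assertion and is automatically topologically unipotent, since a prime-to-$p$ power of $g^{p^n}$ lies in $H_{x,0+}$ (using $(ZH_{x,0+})^\theta\cap H_{x,0}=(Z\cap H_{x,0})^\theta H_{x,0+}^\theta$ and that bounded central elements of $\bH$ have prime-to-$p$ reduction), so $\bar g$ is unipotent. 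Well-definedness and injectivity go together: for topologically unipotent $g,g'\in H_{x,0}^\theta$, the classes $g(ZH_{x,0+})^\theta$ and $g'(ZH_{x,0+})^\theta$ coincide if and only if $g^{-1}g'\in(Z\cap H_{x,0})^\theta H_{x,0+}^\theta$, in which case $\bar g$ and $\bar g'$ differ by a central element $c$ of $\mathsf{H}_x^\circ(\mathfrak{f})$; as $c$ commutes with the unipotents $\bar g,\bar g'$ it is a $p$-element, hence trivial, because $Z(\mathsf{H}_x^\circ)(\mathfrak{f})$ has order prime to $p$, being the $\mathfrak{f}$-points of a group of multiplicative type. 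So the $\mathscr{K}^\theta$-class of a topologically unipotent element of $H_{x,0}^\theta$ determines and is determined by its reduction, and $\sigma$ is the bijection claimed. Surjectivity is immediate from the pro-$p$ lifting of the first paragraph: a unipotent element of $\mathsf{H}_x^\circ(\mathfrak{f})^\theta$ lifts to a topologically unipotent $g\in H_{x,0}^\theta$, whose $\mathscr{K}^\theta$-class lies in $\mathscr{K}_u^\theta$ and is carried onto it by $\sigma$.

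The step I expect to require the most care is the first assertion: one must reconcile the topological Jordan decomposition of \cite{ANewTasho}, which is set up inside $H/A_\bH$, with the quotient $\mathscr{K}^\theta$, which factors out only the center $Z=\bZ(F)$ of $\bG$ and not $A_\bH$, and one must check that the decomposition of a $\theta$-fixed element can be taken $\theta$-stable even though $\theta$ need not preserve $\bT$. The two torsion-freeness inputs — no $p$-torsion in $T/T_0$, and in $Z(\mathsf{H}_x^\circ)(\mathfrak{f})$ — are exactly where the tameness of $\bT$ and the hypotheses on $p$ are used.
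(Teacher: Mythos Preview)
Your argument is essentially sound but takes a different route from the paper, and there is one small slip worth flagging.

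The paper does not apply the topological Jordan decomposition of \cite{ANewTasho} to a lift in $(TH_{x,0})^\theta$. Instead it first uses pure Sylow theory to land in $H_{x,0}^\theta$: setting $\mathscr{A}=T^\theta/(ZT_{0+})^\theta$ and $\mathscr{B}=H_{x,0}^\theta/(ZH_{x,0+})^\theta$, one has $\mathscr{A}$ of prime-to-$p$ order normalizing $\mathscr{B}$, so every Sylow $p$-subgroup of $\mathscr{A}\mathscr{B}\subset\mathscr{K}^\theta$ already lies in $\mathscr{B}$, and hence every $p$-element of $\mathscr{K}^\theta$ has a representative in $H_{x,0}^\theta$. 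The bijection is then established by tracking $p$-elements through the quotients $H_{x,0}^\theta/(Z_0H_{x,0+})^\theta$, $H_{x,0}^\theta/H_{x,0+}^\theta$, and $\mathsf{H}_x^\circ(\mathfrak{f})^\theta$, peeling off a central prime-to-$p$ subgroup at each step. Only at the very end is a topological Jordan decomposition invoked, and then for an element of the \emph{compact} group $H_{x,0}^\theta$, where uniqueness is clean and $\theta$-stability of the factors is automatic. Your approach trades this elementary finite-group manoeuvre for the decomposition on the non-compact group $TH_{x,0}$; you gain directness, but you pay with exactly the $A_\bH$-ambiguity you flag in your last paragraph, and you must import the extra fact that $T/T_0$ has no $p$-torsion, which the paper never needs.

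One genuine slip: your claim that \emph{any} representative $g\in H_{x,0}^\theta$ of a class in $\mathscr{K}_u^\theta$ is automatically topologically unipotent is false. Multiplying a topologically unipotent representative by a nontrivial prime-to-$p$ element of $(Z\cap H_{x,0})^\theta$ yields another representative in $H_{x,0}^\theta$ that is not topologically unipotent, and whose reduction in $\mathsf{H}_x^\circ(\mathfrak{f})$ differs by a nontrivial central element; so with that definition $\sigma$ is not even well-defined. Your argument that some $g^{p^n m}\in H_{x,0+}$ with $m$ prime to $p$ only bounds the order of $\bar g$ by $p^n m$, not by a power of $p$. The fix is simply to define $\sigma$ using the specific topologically unipotent representative furnished by the first assertion; your subsequent argument (that for two such representatives the central discrepancy $c=\bar g^{-1}\bar g'$ is a $p$-element of $Z(\mathsf{H}_x^\circ)(\mathfrak{f})$, hence trivial) then goes through.
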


\begin{proof}
Consider the  group $\mathscr{G}= \mathscr{K}^\theta$ and its subgroups $\mathscr{A} = T^\theta/(ZT_{0+})^\theta$ and $\mathscr{B} = H_{x,0}^\theta/ (ZH_{x,0+})^\theta$.   

What is most relevant for our argument now is that $\mathscr{G}$ is finite group 
 with subgroups $\mathscr{A}$ and $\mathscr{B}$ such that $\mathscr{A}$ has prime-to-$p$ order and it normalizes $\mathscr{B}$.  
Let $S$ be the set of Sylow $p$-subgroups of the (semidirect) product $\mathscr{A}\mathscr{B}$.  Let $\mathscr{C}$ be any Sylow $p$-subgroup of $\mathscr{B}$.  Then $\mathscr{C}$ must be an element of $S$.  But every  other element of $S$ must have the form $ab\mathscr{C}b^{-1}a^{-1}$ for some $a\in \mathscr{A}$ and $b\in \mathscr{B}$.  
It follows that all of the elements of $S$ are subgroups of $\mathscr{B}$, and hence every element of $\mathscr{K}^\theta_u$ has a representative in $H_{x,0}^\theta$.  (Below we show that we can choose a topologically unipotent representative.)

The image of $H_{x,0}^\theta$ in $\mathscr{K}^\theta$ is isomorphic to $H_{x,0}^\theta /(Z_0H_{x,0+})^\theta$.
We therefore have a bijection between $\mathscr{K}^\theta_u$ and the set of elements of $H_{x,0}^\theta /(Z_0H_{x,0+})^\theta$ whose order is a power of $p$.

Now consider the group $\mathscr{D} = H_{x,0}^\theta / H_{x,0+}^\theta$ and the subgroup $\mathscr{E}=(Z_0H_{x,0+})^\theta/H_{x,0+}^\theta$.
Then $\mathscr{D}$ is a finite group and $\mathscr{E}$ is a central subgroup of prime-to-$p$ order.  So the projection $\mathscr{D}\to \mathscr{D}/\mathscr{E}$ gives a bijection between the sets of elements in $\mathscr{D}$ and $\mathscr{D}/\mathscr{E}$ whose order is a power of $p$.  This yields a bijection between $\mathscr{K}^\theta_u$ and the set of elements of $H_{x,0}^\theta /H_{x,0+}^\theta$ whose order is a power of $p$.

Clearly, the  group $\mathscr{D}$ is just the image of $H_{x,0}^\theta$ in $\mathsf{H}_x^\circ (\mathfrak{f})^\theta$.  But we claim that, in fact, $\mathscr{D} = \mathsf{H}_x^\circ (\mathfrak{f})^\theta$ (or, more precisely, $\mathscr{D}$ surjects onto $\mathsf{H}_x^\circ (\mathfrak{f})^\theta$).  Indeed, the elements of $\mathsf{H}_x^\circ (\mathfrak{f})^\theta$ are just the cosets in $H_{x,0:0+}$ that have a representative $h\in H_{x,0}$ such that $h^{-1}\theta (h)\in H_{x,0+}$.  Given such an $h$, according to \cite[Proposition 2.12]{MR2431732}, we can choose $k\in H_{x,0+}$ such that $h^{-1}\theta (h) = k\theta(k)^{-1}$.  So $hH_{x,0+}$ contains an element of $H_{x,0}^\theta$, namely, $hk$.  Thus, $\mathscr{D} = \mathsf{H}_x^\circ (\mathfrak{f})^\theta$.

We now have a bijection between $\mathscr{K}^\theta_u$ and the set of elements of $\mathsf{H}_x^\circ (\mathfrak{f})^\theta$ whose order is a power of $p$.
But since the Sylow $p$-subgroups of $\mathsf{H}_x^\circ (\mathfrak{f})$ are just its maximal unipotent subgroups (see \cite[Proposition 3.19]{MR1118841}), the elements whose order is a power of $p$ are precisely  the unipotent elements.

It only remains to show 
that every element of $\mathscr{K}^\theta_u$ has a topologically unipotent lift in $H_{x,0}^\theta$.
Now suppose $h\in H_{x,0}^\theta$ has unipotent image $\mathsf{H}_x^\circ (\mathfrak{f})$ and let $h= h_sh_u$ be its (unique) topological Jordan decomposition.  (See \cite{MR2408311}.)  Applying $\theta$ to the decomposition and using uniqueness shows that the components $h_s$ and $h_u$ are necessarily $\theta$-fixed.  According to Remark 1.9 \cite{MR2408311}, the element $h_s$ has finite prime-to-$p$ order.  Since $h_s$ commutes with $h_u$, we see that the image of $h_s$ in $\mathsf{H}_x^\circ (\mathfrak{f})$ must be trivial.  So $h_u$ must be a topologically unipotent element in $H_{x,0}^\theta$ that has the same image as $h$.  It follows that every element of $\mathscr{K}^\theta_u$ has a topologically unipotent lift in $H_{x,0}^\theta$.
\end{proof}


\begin{lemma}
If $k\in \mathscr{K}_u^\theta$ and $h\in H_{x,0}^\theta$ is a topologically unipotent lift of $k$ then
$\hat\mu (h) =1$.
\end{lemma}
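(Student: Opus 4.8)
The plan is to reduce the claim to the triviality of a positive‑depth character on $h$, and then to study that character on the $\theta$‑fixed parahoric. Fix a weak factorization $(\mu_-,\mu_+)$ of $\mu$ (Definition \ref{weakfact}). By the construction of $\hat\mu$ from a weak factorization (see \S3.2 of \cite{ANewTasho}), on a topologically unipotent element the depth‑zero character $\mu_-$ contributes nothing to $\hat\mu$ — just as, in the Deligne--Lusztig setting, the canonical extension of a depth‑zero character to the topologically unipotent set is identically $1$ — so that $\hat\mu(h)=\mu_+(h)$. (This is consistent with the proof of Lemma \ref{boringlemma}, which records $\rho_T^\mu(h)=\mu_+(h)\,\mathrm{Id}$ for $h\in H_{x,0+}$.) It therefore suffices to show $\mu_+(h)=1$.

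Next I would observe that $\mu_+$ is trivial on $H_{x,0+}^\theta$. Indeed, Lemma \ref{boringlemma} gives $\mu^\sharp|(ZH_{x,0+})^\theta=1$, hence in particular $\mu^\sharp|H_{x,0+}^\theta=1$; combining this with the identity $\rho_T^\mu|H_{x,0+}=\mu^\sharp|H_{x,0+}\cdot\mathrm{Id}=\mu_+|H_{x,0+}\cdot\mathrm{Id}$ established inside the proof of Lemma \ref{boringlemma} yields $\mu_+|H_{x,0+}^\theta=1$. Since $H_{x,0}^\theta\cap H_{x,0+}=H_{x,0+}^\theta$, the character $\mu_+|H_{x,0}^\theta$ descends to a character $\bar\mu_+$ of the quotient $H_{x,0}^\theta/H_{x,0+}^\theta$, which — by the identification established in the proof of the previous lemma — is $\mathsf{H}_x^\circ(\mathfrak{f})^\theta$.

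To finish I would use that $p\ne 2$. The image $\bar h$ of $h$ in $\mathsf{H}_x^\circ(\mathfrak{f})^\theta$ is unipotent, hence of $p$‑power order; since the component group of the $\mathfrak{f}$‑group $(\mathsf{H}_x^\circ)^\theta$ — the fixed points of an involution in residue characteristic $\ne 2$ — is a $2$‑group, and so of order prime to $p$, the element $\bar h$ lies in the connected reductive $\mathfrak{f}$‑group $((\mathsf{H}_x^\circ)^\theta)^\circ$. But any character of the group of $\mathfrak{f}$‑points of a connected reductive group is trivial on unipotent elements, since these lie in the image of the group of $\mathfrak{f}$‑points of the simply connected cover of the derived group, which is perfect (Steinberg's theorem; here the hypotheses on $p$ exclude the exceptional small cases). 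Hence $\bar\mu_+(\bar h)=1$, i.e.\ $\mu_+(h)=1$, and therefore $\hat\mu(h)=1$.

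The two steps that require care are the first reduction, which rests on the precise definition of $\hat\mu$ in \cite{ANewTasho}, and the descent of $\mu_+$ to $\mathsf{H}_x^\circ(\mathfrak{f})^\theta$ — this is where the standing hypothesis $\langle\xi,\rho_T^\mu\rangle^{\varepsilon^+}_{TH_{x,0}}\ne 0$ enters, through Lemma \ref{boringlemma}; I expect this descent to be the main obstacle. The remaining ingredients — that the fixed points of an involution have a $2$‑group component group, and that characters of finite reductive groups kill unipotent elements — are standard and of the same flavor as those appearing in the proofs of Lemma \ref{epconn} and of the previous lemma.
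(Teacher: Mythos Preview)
Your argument is correct and proceeds along the same lines as the paper's: both reduce $\hat\mu(h)$ to $\mu_+(h)$ via a weak factorization, both invoke Lemma~\ref{boringlemma} to neutralize the contribution from $H_{x,0+}^\theta$, and both ultimately rest on the fact that the unipotent image $\bar h$ lies in the commutator subgroup, which $\mu_+$ (a character of all of $H$) annihilates.

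The only difference is in packaging. The paper \emph{lifts}: it writes $\bar h$ as a commutator in $\mathsf{H}_x^\circ(\mathfrak f)$, lifts this to a commutator $j\in H_{x,0}^\theta$, and writes $h=j\ell$ with $\ell\in H_{x,0+}^\theta$; then $\mu_+(j)=1$ because $\mu_+$ is a character of $H$ and $\mu_+(\ell)=1$ by Lemma~\ref{boringlemma}. You instead \emph{descend}: you factor $\mu_+|H_{x,0}^\theta$ through $\mathsf{H}_x^\circ(\mathfrak f)^\theta$, pass to the identity component $((\mathsf{H}_x^\circ)^\theta)^\circ$ via the $2$-group component-group observation, and then invoke the general principle that characters of $\mathfrak f$-points of connected reductive groups kill unipotents (via perfectness of the simply connected cover). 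Your route imports a slightly heavier structural fact and the extra step of locating $\bar h$ in the identity component, while the paper's route is more hands-on but has to be a bit careful that the commutator lift can be chosen $\theta$-fixed; in substance the two arguments are equivalent.
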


\begin{proof}
Suppose we are given $k$ and $h$.  Since the image of $h$ in $\mathsf{H}_x^\circ (\mathfrak{f})$ is a commutator, we may choose a commutator $j$ in $H_{x,0}^\theta$ and $\ell \in H^\theta_{x,0+}$ such that $h= j\ell$.

If $(\mu_-,\mu_+)$ is any weak factorization of $\mu$ then $\hat\mu (h) = \mu_+(h) = \mu_+(j)\mu_+(\ell) =1$.  Indeed, $\mu_+(j)=1$ since $j$ a commutator and $\mu_+$ is a character of $H$, while $\mu_+ (\ell )=1$ according to Lemma \ref{boringlemma}.

Suppose $u\in \mathscr{K}_u^\theta$.  Then $u$ may be viewed as a unipotent element in $\mathsf{H}_x^\circ (\mathfrak{f})$ and hence as a commutator in $\mathsf{H}_x^\circ (\mathfrak{f})$.  We can lift $u$ to a commutator $h\in H_{x,0}$.
The function $\hat\mu$ is defined on the set of topologically unipotent elements in $H_{x,0}$ 
Then $\hat\mu (h) = \mu_+(h)$, where $(\mu_-,\mu_+)$ is any weak factorization of $\mu$.  But since $h$ is a commutator and $\mu_+$ is a character of $H$, we have $\mu_+(h)=1$.
\end{proof}

\subsubsection{$p$-semisimple elements}

The character formula for the character of $\rho_T^\mu$, together with the theory in the previous section, yields:
$$\left\langle \xi ,\rho_T^\mu\right\rangle^{\varepsilon^+}_{TH_{x,0}} 
= \frac{(-1)^{\ell(w)}}{|\mathscr{K}^\theta |} 
\!\sum_{\substack{s\in \mathscr{K}'^\theta_s \\   h\in \mathsf{H}_x^\circ (\mathfrak{f}) }}\!\!\!
 \frac{{}^h\dot\mu(s)\varepsilon^+_\theta (s)}{|\mathsf{M}_s(\mathfrak{f})|} \!\!\!\sum_{\substack{u\in \mathsf{M}_s(\mathfrak{f})^\theta \\ u \text{ unipotent} }}\!\! Q_{h\mathsf{T}h^{-1}}^{\mathsf{M}_s}(u).$$
 where $$\mathscr{K}'^\theta_s = ({\rm Int}(H_{x,0})(TH_{x,0+}))^\theta/(ZH_{x,0+})^\theta.$$
 
 Implicit in this formula is the fact that ${}^h\dot\mu (s)$ is well-defined for $s\in \mathscr{K}'^\theta_s$.  The first issue is that ${}^h\dot\mu$ is defined on $TH_{x,0}$, whereas the elements $s$ are cosets of $(ZH_{x,0+})^\theta$.  But, according to Lemma \ref{boringlemma}, $\mu^\sharp |(ZH_{x,0+})^\theta =1$, which implies that ${}^h\dot\mu$ is constant on the relevant cosets.
 Clearly, conjugation by $h\in \mathsf{H}_x^\circ (\mathfrak{f})$ also preserves cosets of $(ZH_{x,0+})^\theta$.  So $h^{-1}sh$ and ${}^h\dot\mu (s) = \dot\mu (h^{-1}sh)$ have obvious meanings.
 
The term ${}^h\dot\mu (s)$ is nonzero precisely when $h^{-1}sh$ lies in 
$$\mathscr{T}^{h^{-1}\cdot \theta } = (TH_{x,0+})^{h^{-1}\cdot \theta } /(ZH_{x,0+})^{h^{-1}\cdot \theta }
 \cong T^{h^{-1}\cdot \theta }/(ZT_{0+})^{h^{-1}\cdot \theta } .$$

\subsubsection{Generalizing a computation of Lusztig}

\begin{proposition}\label{Luszt}
$$\left\langle \xi ,\rho_T^\mu\right\rangle^{\varepsilon^+}_{TH_{x,0}} 
=  \#  \left( (TH_{x,0+})\bs \Xi_{\mathsf{T},\mu}/  (TH_{x,0})^\theta \right),$$
where
$$\Xi_{\mathsf{T},\mu} = \left\{ h\in TH_{x,0} \ :\  (h\cdot\theta ) (\mathsf{T}) = \mathsf{T},\ \mu\varepsilon_{h\cdot \theta}|T^{h\cdot\theta} =1\right\}.$$
\end{proposition}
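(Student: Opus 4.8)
The plan is to start from the last displayed formula
$$\left\langle \xi ,\rho_T^\mu\right\rangle^{\varepsilon^+}_{TH_{x,0}}
= \frac{(-1)^{\ell(w)}}{|\mathscr{K}^\theta |}
\sum_{\substack{s\in \mathscr{K}'^\theta_s \\ h\in \mathsf{H}_x^\circ (\mathfrak{f}) }}
 \frac{{}^h\dot\mu(s)\varepsilon^+_\theta (s)}{|\mathsf{M}_s(\mathfrak{f})|} \sum_{\substack{u\in \mathsf{M}_s(\mathfrak{f})^\theta \\ u \text{ unipotent} }} Q_{h\mathsf{T}h^{-1}}^{\mathsf{M}_s}(u),$$
and to evaluate the inner sum over unipotent $u$ using the generalization of Lusztig's computation alluded to in the section title. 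The classical input (Lusztig, as in \cite{MR1106911} and as recalled in \cite{ANewHM}) is that for a connected reductive $\mathfrak f$-group $\mathsf M$ with an involution $\theta$ and a $\theta$-stable maximal torus $\mathsf{T}'=h\mathsf{T}h^{-1}$, the sum $\sum_{u}Q_{\mathsf{T}'}^{\mathsf M}(u)$ over $\theta$-fixed unipotent $u$ equals, up to the sign $(-1)^{\ell(w)}$ and the normalizing factors $|\mathsf M^\theta|/|\mathsf{T}'^\theta|$ already present, the indicator of whether $\varepsilon^-$-type character condition holds — more precisely it collapses to $|\mathsf M_s(\mathfrak f)|/|\mathscr K^\theta|$ times a Kronecker delta detecting $(h\cdot\theta)$-stability of $\mathsf{T}$ together with the vanishing of a quadratic-character obstruction on $\mathsf{T}^{h\cdot\theta}$. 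First I would isolate, for fixed $h$ and $s$ with ${}^h\dot\mu(s)\ne 0$, the fact (already noted just before the statement) that ${}^h\dot\mu(s)\ne 0$ forces $h^{-1}sh\in \mathscr T^{h^{-1}\cdot\theta}$, hence $h\cdot\theta$ stabilizes $\mathsf T$ and $s$ contributes a root-space determinant; then apply the Lusztig-type identity to replace the inner $u$-sum by the relevant delta, absorbing $(-1)^{\ell(w)}$ and $|\mathsf M_s(\mathfrak f)|$.

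The second step is bookkeeping of the resulting double sum over $(s,h)$. After the substitution, each surviving pair $(s,h)$ contributes $1$ (the characters $\varepsilon^+_\theta(s)$ and the $\varepsilon^-$-factor produced by the Green-function sum combine, via $\varepsilon = \varepsilon^-\varepsilon^+$, into the single condition $\mu\varepsilon_{h\cdot\theta}|T^{h\cdot\theta}=1$, and ${}^h\dot\mu(s)=1$ on its support since $\dot\mu$ restricted to $TH_{x,0+}$ is a character of controlled size — here Lemma \ref{boringlemma} guarantees $\dot\mu$ is constant on the relevant $(ZH_{x,0+})^\theta$-cosets). So $\left\langle \xi ,\rho_T^\mu\right\rangle^{\varepsilon^+}_{TH_{x,0}}$ becomes $\frac{1}{|\mathscr K^\theta|}$ times the number of pairs $(s,h)$ with $h\in\mathsf H_x^\circ(\mathfrak f)$ and $s$ a $(ZH_{x,0+})^\theta$-coset in $(TH_{x,0+})^\theta$ conjugated by $h$ into the same type of set, subject to the two conditions $(h\cdot\theta)(\mathsf T)=\mathsf T$ and $\mu\varepsilon_{h\cdot\theta}|T^{h\cdot\theta}=1$. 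The next move is to recognize this count as a count over the set $\Xi_{\mathsf T,\mu}$: the map $h\mapsto$ (its conditions) together with the freedom in $s$ reassembles into $\#\{h\in TH_{x,0}: (h\cdot\theta)(\mathsf T)=\mathsf T,\ \mu\varepsilon_{h\cdot\theta}|T^{h\cdot\theta}=1\}$, divided by the appropriate orbit sizes.

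Finally I would convert the normalized count into a double-coset count. The group $\mathscr K^\theta=(TH_{x,0})^\theta/(ZH_{x,0+})^\theta$ acts on the right of $\Xi_{\mathsf T,\mu}$, and $TH_{x,0+}$ acts on the left (it preserves both defining conditions because $H_{x,0+}$ acts trivially on $\mathsf T$ and $\mu\varepsilon|T^\theta$ is insensitive to the $0+$-part by Lemma \ref{boringlemma} and Lemma \ref{epconn}); the orbit–counting identity $\frac{1}{|\mathscr K^\theta|}\#\Xi_{\mathsf T,\mu}=\#\big((TH_{x,0+})\bs\Xi_{\mathsf T,\mu}/(TH_{x,0})^\theta\big)$ then follows provided the right $(TH_{x,0})^\theta$-action on the left-coset space $(TH_{x,0+})\bs\Xi_{\mathsf T,\mu}$ is free, which in turn follows from the stabilizer computations: an element fixing a coset lies in $(ZH_{x,0+})^\theta$ by the very descriptions of $\mathsf H_x^{\circ\theta}$ and $(ZH_{x,0+})^\theta=Z^\theta H_{x,0+}^\theta$ used earlier. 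The main obstacle I anticipate is the first step — pinning down exactly the generalized Lusztig identity for $\sum_u Q_{h\mathsf Th^{-1}}^{\mathsf M_s}(u)$ with the correct sign and the correct quadratic-character output, and checking that the $\varepsilon^-$-factor it produces is literally $\varepsilon^-_{h\cdot\theta}$ so that it merges with $\varepsilon^+_\theta$ into $\varepsilon_{h\cdot\theta}$; everything after that is careful but routine reindexing and an orbit-counting argument.
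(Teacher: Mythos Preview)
Your outline has the right shape---evaluate the Green-function sum via Lusztig, collapse the character sum, then orbit-count---but there are two genuine gaps and one incorrect claim.

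First, Lusztig's identity (Theorem 3.4 of \cite{MR1106911}) does \emph{not} produce a Kronecker delta. What it actually gives is
\[
\sum_{u}Q_{h\mathsf{T}h^{-1}}^{\mathsf{M}_s}(u)
=\frac{\sigma(\mathsf{T})}{|\mathsf{T}(\mathfrak f)|}
\sum_{\substack{g\in \mathsf{M}_s(\mathfrak f)\\ (g\cdot\theta)(h\mathsf{T}h^{-1})=h\mathsf{T}h^{-1}}}
\sigma\bigl(Z_{\mathsf{M}_s}(((h\mathsf{T}h^{-1})^{g\cdot\theta})^\circ)\bigr),
\]
a signed sum over $g$. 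After the changes of variables you describe, one arrives only at
\[
\left\langle \xi ,\rho_T^\mu\right\rangle^{\varepsilon^+}_{TH_{x,0}}
=\sigma(\mathsf{H}_x^\circ)
\sum_{h\in (TH_{x,0+})\bs\Xi_{\mathsf{T},\mu}/(TH_{x,0})^\theta}
\sigma\bigl(Z_{\mathsf{H}_x^\circ}((\mathsf{T}^{h\cdot\theta})^\circ)\bigr),
\]
and each summand is a priori only $\pm 1$. The paper treats this as a separate lemma; your proposal jumps straight to the answer as if the signs were automatically $+1$.

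Second---and this is the key missing idea---to make those signs all equal to $\sigma(\mathsf{H}_x^\circ)$ one must show that $h\cdot\theta$ fixes no root of $(\mathsf{H}_x^\circ,\mathsf{T})$, equivalently that $\mathsf{T}$ lies in Lusztig's set $\mathscr{J}_{h\cdot\theta}$. This is where the \emph{regularity} of $\mu$ enters: if a root $a$ were $\theta$-fixed, then $N_{\mathfrak f'/\mathfrak f}(\check a(\mathfrak f'^\times))\subset(\mathsf{T}^\theta)^\circ(\mathfrak f)$, on which both $\varepsilon_\theta^-$ (by Lusztig) and $\varepsilon_\theta^+$ (by Lemma~\ref{epconn}) are trivial; the condition $\mu\varepsilon_\theta|T^\theta=1$ then forces $\bar\mu$ to be trivial there too, contradicting nonsingularity of the depth-zero part of $\mu$. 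Nothing in your proposal invokes regularity, so the argument cannot close.

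Finally, your freeness claim in the last step is false. The stabilizer of $(TH_{x,0+})h$ under the right $(TH_{x,0})^\theta$-action (descended to $\mathscr{K}^\theta$) is $h^{-1}\mathscr{T}^{h\cdot\theta}h$, which is nontrivial; it is exactly the factor $|\mathscr{T}^{h\cdot\theta}|$ arising from the sum over $s\in\mathscr{T}^{h\cdot\theta}$ that cancels against this stabilizer in the orbit count. (Relatedly, the condition $(h\cdot\theta)(\mathsf{T})=\mathsf{T}$ does not come from ${}^h\dot\mu(s)\ne 0$; it emerges from the index set in Lusztig's formula after the change of variables $h\mapsto gh$.)
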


\noindent
Proposition \ref{Luszt} will be proven at the end of this section.  The core of the proof is contained in:

\begin{lemma}\label{almostprop}
$$\left\langle \xi ,\rho_T^\mu\right\rangle^{\varepsilon^+}_{TH_{x,0}} 
=  \sigma (\mathsf{H}_x^\circ)
\sum_{h\in (TH_{x,0+})\bs \Xi_{\mathsf{T},\mu}/  (TH_{x,0})^\theta } 
\sigma (Z_{\mathsf{H}^\circ_x}((\mathsf{T}^{h\cdot\theta})^\circ)).$$
\end{lemma}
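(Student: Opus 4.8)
The plan is to evaluate, in three stages, the expression obtained in the preceding subsection,
\[
\left\langle \xi ,\rho_T^\mu\right\rangle^{\varepsilon^+}_{TH_{x,0}}
= \frac{(-1)^{\ell(w)}}{|\mathscr{K}^\theta |}
\sum_{\substack{s\in \mathscr{K}'^\theta_s \\   h\in \mathsf{H}_x^\circ (\mathfrak{f}) }}
 \frac{{}^h\dot\mu(s)\,\varepsilon^+_\theta (s)}{|\mathsf{M}_s(\mathfrak{f})|} \sum_{\substack{u\in \mathsf{M}_s(\mathfrak{f})^\theta \\ u \text{ unipotent} }} Q_{h\mathsf{T}h^{-1}}^{\mathsf{M}_s}(u).
\]
First I would dispose of the innermost unipotent sum by a suitable generalization of a computation of Lusztig; then I would carry out the sum over the $p$-semisimple part $s$, which collapses to a character sum over the rational points of a torus and produces exactly the condition defining $\Xi_{\mathsf{T},\mu}$; finally I would reorganize the surviving sum over $h$ into a sum over double cosets and cancel the numerical factors.

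For the first stage, note that whenever ${}^h\dot\mu(s)\neq 0$ the connected centralizer $\mathsf{M}_s=Z_{\mathsf{H}_x^\circ}(s)^\circ$ is $\theta$-stable (because $\theta$ fixes $s$), and, by the analysis at the end of the previous subsection (namely $h^{-1}sh\in\mathscr{T}^{h^{-1}\cdot\theta}$), the torus $\mathsf{S}=h\mathsf{T}h^{-1}$ is an $F$-stable, $\theta$-stable maximal torus of $\mathsf{M}_s$. Lusztig's evaluation of the sum of a Green function over the fixed-point unipotent set of an involution (as in \cite[\S2]{MR1106911}; see also \cite{MR1118841}) then applies: $\sum_{u} Q_{\mathsf{S}}^{\mathsf{M}_s}(u)$ vanishes unless $\mathsf{S}$ is $\theta$-stable, and in the $\theta$-stable case it equals an explicit product of the signs $\sigma(\mathsf{M}_s)$ and $\sigma(Z_{\mathsf{M}_s}((\mathsf{S}^\theta)^\circ))$, of the value of the quadratic character $\varepsilon^-$ at the relevant semisimple element, and of a ratio of the orders of $\mathsf{M}_s(\mathfrak{f})^\theta$ and of $Z_{\mathsf{M}_s}((\mathsf{S}^\theta)^\circ)(\mathfrak{f})$. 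The reason this requires ``generalizing Lusztig'' is that $\mathsf{M}_s^\theta$ need not be connected and we are working over the algebraically closed residue field $\mathfrak{f}$; Lusztig's argument goes through once everything is phrased in terms of $(\mathsf{M}_s^\theta)^\circ$, but the \emph{precise} normalization must be pinned down, since it is exactly this normalization that makes the signs and group orders cancel in the third stage. I expect establishing this normalization to be the main obstacle.

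For the second stage I would fix an $h$ for which $\mathsf{S}=h\mathsf{T}h^{-1}$ is $\theta$-stable and sum over $s$. Since $\dot\mu$ vanishes off $TH_{x,0+}$ and, by Lemma \ref{boringlemma}, $\mu^\sharp$ is trivial on $(ZH_{x,0+})^\theta$, the factor ${}^h\dot\mu(s)$ descends to the finite torus $\mathscr{T}^{h^{-1}\cdot\theta}\cong T^{h^{-1}\cdot\theta}/(ZT_{0+})^{h^{-1}\cdot\theta}$, where it is given by $\mu$ (via a weak factorization); likewise $\varepsilon^+_\theta(s)$ is $\varepsilon^+_{h^{-1}\cdot\theta}$ evaluated there, because $\varepsilon^+$ is a compatible family, and the Deligne--Lusztig sign produced in the first stage is $\varepsilon^-_{h^{-1}\cdot\theta}$ (using the description of $\varepsilon^-$ recalled in \S\ref{sec:epsi} and \cite[\S4.3]{ANewHM}). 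So, up to the order factors already present, the sum over $s$ is a sum of the character $\mu\,\varepsilon^-_{h^{-1}\cdot\theta}\varepsilon^+_{h^{-1}\cdot\theta}=\mu\,\varepsilon_{h^{-1}\cdot\theta}$ over the rational points of a torus, hence equals the order of that group when $\mu\,\varepsilon_{h^{-1}\cdot\theta}|T^{h^{-1}\cdot\theta}=1$ and vanishes otherwise. Together with the $\theta$-stability of $\mathsf{S}$, these are precisely the conditions placing $h^{-1}$ in $\Xi_{\mathsf{T},\mu}$, so after replacing $h$ by $h^{-1}$ (which also transports the $\mathsf{S}^\theta$-data to $\mathsf{T}^{h\cdot\theta}$-data) the surviving $h$ range over $\Xi_{\mathsf{T},\mu}$.

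For the third stage I would collect repeated terms. Two elements of $\Xi_{\mathsf{T},\mu}$ give identical summands exactly when they lie in the same double coset in $(TH_{x,0+})\bs\Xi_{\mathsf{T},\mu}/(TH_{x,0})^\theta$: the left $TH_{x,0+}$ because ${}^h\dot\mu$, the torus $\mathscr{T}^{h^{-1}\cdot\theta}$, and the condition cutting out $\Xi_{\mathsf{T},\mu}$ all depend on $h$ only modulo $TH_{x,0+}$, and the right $(TH_{x,0})^\theta$ by the $\theta$-equivariance of the whole setup (Facts \ref{firstfact} and \ref{secondfact}). Choosing a representative $h$ of each double coset, what remains is to verify that the numerical factors --- the $1/|\mathscr{K}^\theta|$, the $|\mathsf{M}_s(\mathfrak{f})|$, the order of $\mathscr{T}^{h^{-1}\cdot\theta}$, the orders $|\mathsf{M}_s(\mathfrak{f})^\theta|$ and $|Z_{\mathsf{M}_s}((\mathsf{S}^\theta)^\circ)(\mathfrak{f})|$ coming from the first stage, the cardinality of the double coset, and the sign $(-1)^{\ell(w)}$ --- all cancel, leaving only $\sigma(\mathsf{H}_x^\circ)\,\sigma(Z_{\mathsf{H}_x^\circ}((\mathsf{T}^{h\cdot\theta})^\circ))$ per double coset. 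Here I would use $(-1)^{\ell(w)}=\sigma(\mathsf{H}_x^\circ)\sigma(\mathsf{T})$, the ellipticity of $\mathsf{T}$ in $\mathsf{H}_x^\circ$, and the bijections between cosets established in the subsection on $p$-unipotent elements. Apart from this (lengthy but routine) bookkeeping, the one genuinely delicate point is that the dependence on $s$ through $\mathsf{M}_s$ must wash out under the summation, which is where the exact form of Lusztig's identity from the first stage is used in earnest.
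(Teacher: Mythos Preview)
Your three-stage plan matches the paper's, and stages two and three are correctly conceived. The gap is in stage one: Lusztig's Theorem~3.4 in \cite{MR1106911} does \emph{not} give the unipotent sum as a closed-form product. It gives
\[
\sum_{\substack{u\in \mathsf{M}_s(\mathfrak{f})^\theta \\ u \text{ unipotent}}} Q_{h\mathsf{T}h^{-1}}^{\mathsf{M}_s}(u)
= \frac{\sigma(\mathsf{T})}{|\mathsf{T}(\mathfrak{f})|}
\sum_{\substack{g\in \mathsf{M}_s(\mathfrak{f})\\ (g\cdot\theta)(h\mathsf{T}h^{-1})=h\mathsf{T}h^{-1}}}
\sigma\bigl(Z_{\mathsf{M}_s}\bigl(((h\mathsf{T}h^{-1})^{g\cdot\theta})^\circ\bigr)\bigr),
\]
a sum over $g\in\mathsf{M}_s(\mathfrak{f})$; no ratio $|\mathsf{M}_s(\mathfrak{f})^\theta|/|Z_{\mathsf{M}_s}((\mathsf{S}^\theta)^\circ)(\mathfrak{f})|$ appears, and no $\varepsilon^-$ factor is yet visible. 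The device that makes the $\mathsf{M}_s$-dependence wash out---which you rightly flag as the delicate point but do not supply---is the change of variables $h\mapsto gh$ in the outer sum over $\mathsf{H}_x^\circ(\mathfrak{f})$. This turns the constraint on $g$ into the $g$-free condition $(h^{-1}\cdot\theta)(\mathsf{T})=\mathsf{T}$, after which $g$ ranges freely over $\mathsf{M}_s(\mathfrak{f})$ and cancels $1/|\mathsf{M}_s(\mathfrak{f})|$ exactly.

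After this cancellation the surviving $s$-dependent sign is $\sigma\bigl(Z_{\mathsf{M}_s}(((h\mathsf{T}h^{-1})^\theta)^\circ)\bigr)$. The character $\varepsilon^-$ enters only at this point, via Lusztig's identity
$\varepsilon_{\mathsf{T},\theta'}(t)=\sigma\bigl(Z_{\mathsf{H}_x^\circ}((\mathsf{T}^{\theta'})^\circ)\bigr)\,\sigma\bigl(Z_{\mathsf{M}_t}((\mathsf{T}^{\theta'})^\circ)\bigr)$
applied with $\theta'=h^{-1}\cdot\theta$ and $t=h^{-1}sh$: this trades the $s$-dependent sign for $\varepsilon^-_{h^{-1}\cdot\theta}(t)$ times the $s$-independent factor $\sigma\bigl(Z_{\mathsf{H}_x^\circ}((\mathsf{T}^{h^{-1}\cdot\theta})^\circ)\bigr)$. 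With that in hand, your stages two and three go through as you describe (with the sign bookkeeping reduced to $(-1)^{\ell(w)}\sigma(\mathsf{T})=\sigma(\mathsf{H}_x^\circ)$ and the stabilizer count $|\mathscr{T}^{h\cdot\theta}|$ absorbing the remaining order factors).
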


\begin{proof}
Applying the discussion in the previous section, we have
$$\left\langle \xi ,\rho_T^\mu\right\rangle^{\varepsilon^+}_{TH_{x,0}} 
= \frac{(-1)^{\ell(w)}}{|\mathscr{K}^\theta  |} 
\!\sum_{\substack{h\in \mathsf{H}_x^\circ (\mathfrak{f}) \\s\in h\mathscr{T}^{h^{-1}\cdot \theta }  h^{-1}}}\!\!\!
 \frac{{}^h\dot\mu(s)\varepsilon^+_\theta (s)}{|\mathsf{M}_s(\mathfrak{f})|} \!\!\!\sum_{\substack{u\in \mathsf{M}_s(\mathfrak{f})^\theta \\ u \text{ unipotent} }}\!\! Q_{h\mathsf{T}h^{-1}}^{\mathsf{M}_s}(u).$$

The unipotent sum may be expressed using Lusztig's Theorem 3.4 \cite{MR1106911} as $$\sum_{\substack{u\in \mathsf{M}_s(\mathfrak{f})^\theta \\ u \text{ unipotent} }}\!\! Q_{h\mathsf{T}h^{-1}}^{\mathsf{M}_s}(u)= \frac{\sigma (\mathsf{T})}{|\mathsf{T}(\mathfrak{f})|}
\!\!\!\!\sum_{\substack{g\in \mathsf{M}_s(\mathfrak{f})\\ (g\cdot \theta )(h\mathsf{T}h^{-1}) =h\mathsf{T}h^{-1}
}} \sigma (Z_{\mathsf{M}_s}(((h\mathsf{T}h^{-1})^{g\cdot \theta})^\circ)).$$

Combining this with the fact that  $(-1)^{\ell(w)}\sigma (\mathsf{T})= \sigma (\mathsf{H}_x^\circ)$, we may express $\left\langle \xi ,\rho_T^\mu\right\rangle^{\varepsilon^+}_{TH_{x,0}} $ as
$$\frac{\sigma (\mathsf{H}_x^\circ)}{|\mathscr{K}^\theta  | |\mathsf{T}(\mathfrak{f})|} 
\!\!\!\!\sum_{\substack{h\in \mathsf{H}_x^\circ (\mathfrak{f})\\ 
g\in \mathsf{M}_s(\mathfrak{f})\\ (g\cdot \theta )(h\mathsf{T}h^{-1}) =h\mathsf{T}h^{-1}\\ s\in h\mathscr{T}^{h^{-1}\cdot \theta } h^{-1} }}\!\!\!\!\!\!\!
 \frac{{}^h\dot\mu(s)\varepsilon^+_\theta (s)}{|\mathsf{M}_s(\mathfrak{f})|} 
\sigma (Z_{\mathsf{M}_s}(((h\mathsf{T}h^{-1})^{g\cdot \theta})^\circ)) .$$

Applying the change of variables $h\mapsto gh$, this becomes
$$\frac{\sigma (\mathsf{H}_x^\circ)}{|\mathscr{K}^\theta  | |\mathsf{T}(\mathfrak{f})|} 
\!\!\!\!\!\!\sum_{\substack{h\in \mathsf{H}_x^\circ (\mathfrak{f})\\
g\in \mathsf{M}_s(\mathfrak{f})\\ (h^{-1}\cdot \theta )(\mathsf{T}) =\mathsf{T}\\  s\in h\mathscr{T}^{h^{-1}\cdot \theta } h^{-1} }}\!\!\!\!\!\!\!\!\!
 \frac{{}^h\dot\mu(s)\varepsilon^+_\theta (s)}{|\mathsf{M}_s(\mathfrak{f})|} 
\sigma (Z_{\mathsf{M}_s}(((h\mathsf{T}h^{-1})^{\theta})^\circ)) .$$

Now we notice that $g$ only appears as an index in the sum, and hence
$$\left\langle \xi ,\rho_T^\mu\right\rangle^{\varepsilon^+}_{TH_{x,0}} 
= \frac{\sigma (\mathsf{H}_x^\circ)}{|\mathscr{K}^\theta  | |\mathsf{T}(\mathfrak{f})|} 
\!\!\!\!\sum_{\substack{ h\in \mathsf{H}_x^\circ (\mathfrak{f})\\ (h^{-1}\cdot \theta )(\mathsf{T}) =\mathsf{T}\\ s\in h\mathscr{T}^{h^{-1}\cdot \theta } h^{-1} }}\!\!\!\!\!\!\!
{}^h\dot\mu(s)\varepsilon^+_\theta (s)
\sigma (Z_{\mathsf{M}_s}(((h\mathsf{T}h^{-1})^{\theta})^\circ)) .$$

The restriction of $\varepsilon_\theta^-$ to $T_0^\theta$ factors 
 Lusztig's function on $\mathsf{T}(\mathfrak{f})^\theta$:
$$\varepsilon_{\mathsf{T},\theta} (t) = \sigma( Z_{\mathsf{H}_x^\circ} ((\mathsf{T}^\theta)^\circ))\ \sigma (Z_{\mathsf{M}_t}((\mathsf{T}^\theta)^\circ)).$$
We note that $$\varepsilon_{h\mathsf{T}h^{-1},\theta} (s) = \varepsilon_{\mathsf{T},h^{-1}\cdot\theta} (h^{-1}sh)$$
and, similarly, 
$$\varepsilon^+_{\theta} (s) = \varepsilon^+_{h^{-1}\cdot\theta} (h^{-1}sh).$$
So
$$\left\langle \xi ,\rho_T^\mu\right\rangle^{\varepsilon^+}_{TH_{x,0}} 
= \frac{\sigma (\mathsf{H}_x^\circ)}{|\mathscr{K}^\theta  | |\mathsf{T}(\mathfrak{f})|} 
\!\!\!\!\sum_{\substack{h\in \mathsf{H}_x^\circ (\mathfrak{f})\\ (h^{-1}\cdot \theta )(\mathsf{T}) =\mathsf{T} \\  s\in h\mathscr{T}^{h^{-1}\cdot \theta } h^{-1}}}\!\!\!\!\!\!\!
{}^h\dot\mu(s){}^h\varepsilon_{h^{-1}\cdot\theta} (s)
\sigma (Z_{\mathsf{H}^\circ_x}((\mathsf{T}^{h^{-1}\cdot\theta})^\circ)) .$$

The next step is to replace $h$ with $h^{-1}$ and to let $t= shs^{-1}$:
$$\left\langle \xi ,\rho_T^\mu\right\rangle^{\varepsilon^+}_{TH_{x,0}} 
= \frac{\sigma (\mathsf{H}_x^\circ)}{|\mathscr{K}^\theta  | |\mathsf{T}(\mathfrak{f})|} 
\!\!\!\!\sum_{\substack{h\in \mathsf{H}_x^\circ (\mathfrak{f})\\ (h \cdot \theta )(\mathsf{T}) =\mathsf{T} \\  t\in \mathscr{T}^{h\cdot \theta} }}\!\!\!\!\!\!\!
\dot\mu(t)\varepsilon_{h\cdot\theta} (t)
\sigma (Z_{\mathsf{H}^\circ_x}((\mathsf{T}^{h\cdot\theta})^\circ)) .$$

We can now evaluate the sum over $t$, which is just the average of a character of $\mathscr{T}^{h\cdot\theta}$.
The value of the sum is $|\mathscr{T}^{h\cdot\theta} |$ when $\mu\varepsilon_{h\cdot \theta}| T^{h\cdot\theta} =1$ and it vanishes otherwise.

Let
$$\overline{\Xi}_{\mathsf{T},\mu} = \left\{ h\in \mathsf{H}_x^\circ (\mathfrak{f})\ :\  (h\cdot\theta ) (\mathsf{T}) = \mathsf{T},\ \mu\varepsilon_{h\cdot \theta}|T^{h\cdot\theta} =1\right\}.$$
Then
$$\left\langle \xi ,\rho_T^\mu\right\rangle^{\varepsilon^+}_{TH_{x,0}} 
= \frac{\sigma (\mathsf{H}_x^\circ) }{|\mathscr{K}^\theta  | |\mathsf{T}(\mathfrak{f})|} 
\sum_{h\in \overline{\Xi}_{\mathsf{T},\mu}}  |\mathscr{T}^{h\cdot \theta} |\ 
\sigma (Z_{\mathsf{H}^\circ_x}((\mathsf{T}^{h\cdot\theta})^\circ)) .$$

The group $\mathsf{T}(\mathfrak{f})$ acts by left translations on $\Xi_{\mathsf{T},\mu}$ and we have
$$\left\langle \xi ,\rho_T^\mu\right\rangle^{\varepsilon^+}_{TH_{x,0}} 
= \frac{\sigma (\mathsf{H}_x^\circ) }{|\mathscr{K}^\theta  |} 
\sum_{h\in \mathsf{T}(\mathfrak{f})\bs \overline{\Xi}_{\mathsf{T},\mu}}  |\mathscr{T}^{h\cdot \theta} |\ 
\sigma (Z_{\mathsf{H}^\circ_x}((\mathsf{T}^{h\cdot\theta})^\circ)) .$$
We can re-express this using the bijection
$$\mathsf{T}(\mathfrak{f}) \bs \mathsf{H}_x^\circ (\mathfrak{f}) \cong
(TH_{x,0+})\bs (TH_{x,0}).$$
Taking
$$\Xi_{\mathsf{T},\mu} = \left\{ h\in TH_{x,0} \ :\  (h\cdot\theta ) (\mathsf{T}) = \mathsf{T},\ \mu\varepsilon_{h\cdot \theta}|T^{h\cdot\theta} =1\right\},$$
we have
$$\left\langle \xi ,\rho_T^\mu\right\rangle^{\varepsilon^+}_{TH_{x,0}} 
= \frac{\sigma (\mathsf{H}_x^\circ) }{|\mathscr{K}^\theta  |} 
\sum_{h\in (TH_{x,0+})\bs  \Xi_{\mathsf{T},\mu}}  |\mathscr{T}^{h\cdot \theta} |\ 
\sigma (Z_{\mathsf{H}^\circ_x}((\mathsf{T}^{h\cdot\theta})^\circ)) .$$

The group $\mathscr{K}^\theta$ acts  on $(TH_{x,0+})\bs \Xi_{\mathsf{T},\mu}$ by right translations.  The stabilizer of a given coset $(TH_{x,0+}) h$ is $h^{-1}\mathscr{T}^{h\cdot \theta } h$.
Therefore
$$\left\langle \xi ,\rho_T^\mu\right\rangle^{\varepsilon^+}_{TH_{x,0}} 
=  \sigma (\mathsf{H}_x^\circ)
\sum_{h\in (TH_{x,0+})\bs \Xi_{\mathsf{T},\mu}/  (TH_{x,0})^\theta } 
\sigma (Z_{\mathsf{H}^\circ_x}((\mathsf{T}^{h\cdot\theta})^\circ)) .$$
\end{proof}

\begin{proof}[Proof of Proposition \ref{Luszt}]  
In light of Lemma \ref{almostprop}, it suffices to show that if $h\in  \Xi_{\mathsf{T},\mu}$ then 
$$\sigma (Z_{\mathsf{H}^\circ_x}((\mathsf{T}^{h\cdot\theta})^\circ)) = \sigma (\mathsf{H}_x^\circ).$$
We may as well assume $h=1$.
So we are assuming 
$\theta (\mathsf{T}) = \mathsf{T}$ and $\mu\varepsilon_{\theta}|T^{\theta} =1$ and we must show
$$\sigma (Z_{\mathsf{H}^\circ_x}((\mathsf{T}^{\theta})^\circ)) = \sigma (\mathsf{H}_x^\circ).$$

According to Lemma 10.5 \cite{MR1106911}, it suffices to show that
$\mathsf{T}$ lies in the set $\mathscr{J}_\theta$ of maximal tori of the form $\mathsf{B}\cap \theta (\mathsf{B})$ for some Borel subgroup $\mathsf{B}$.  This is equivalent to showing that $\theta$ does not fix any roots (or, equivalently, any coroots) of $(\mathsf{H}_x^\circ,\mathsf{T})$, according to \cite[\S10.1(a)]{MR1106911}.

So, in order to obtain a contradiction, we assume that $a$ is $\theta$-fixed root.  We observe now that since $\varepsilon_\theta$ and $\mu\varepsilon_{\theta}$ have trivial restrictions to $T^\theta_{0+}$, it must be the case that $\mu |T^\theta_0$ factors to a character $\bar\mu_\theta$ of $\mathsf{T}(\mathfrak{f})^\theta = T^\theta_{0:0+}$.
Let $\mathfrak{f}'$ be the splitting field of $\mathsf{T}$.
We will use the assumption that $a$ is $\theta$-fixed to show that $\bar\mu_\theta |N_{\mathfrak{f}'/\mathfrak{f}}(\check a (\mathfrak{f}'^\times))=1$.  Then we will show that the latter fact contradicts the regularity of $\mu$.
(This is the same strategy used in 
\cite[Lemma 10.4]{MR1106911} and \cite[Lemma 8.1]{MR3027804}.)

We observe now that if $u\in \mathfrak{f}'^\times$ then 
$$\theta ( N_{\mathfrak{f}'/\mathfrak{f}}(\check a (u)))
= N_{\mathfrak{f}'/\mathfrak{f}}(\theta (\check a (u)))
= N_{\mathfrak{f}'/\mathfrak{f}}(\check a (u)).$$
Therefore, $$  N_{\mathfrak{f}'/\mathfrak{f}}(\check a (\mathfrak{f}'^\times)) \subset \mathsf{T}(\mathfrak{f})^\theta.$$
But the image of $\check a$ is a connected subgroup of $\mathsf{T}^\theta$ and hence  $$  N_{\mathfrak{f}'/\mathfrak{f}}(\check a (\mathfrak{f}'^\times)) \subset (\mathsf{T}^\theta)^\circ (\mathfrak{f})\subset   (\mathsf{H}_x^{\circ \theta})^\circ (\mathfrak{f}) .$$
Therefore, Lemma \ref{epconn} implies 
$\varepsilon^+_\theta |  N_{\mathfrak{f}'/\mathfrak{f}}(\check a (\mathfrak{f}'^\times)) =1$.   Proposition 2.3(b) in \cite{MR1106911} implies $\varepsilon^-_\theta |  N_{\mathfrak{f}'/\mathfrak{f}}(\check a (\mathfrak{f}'^\times)) =1$.  It follows that from $\mu\varepsilon_{\theta}|T^{\theta} =1$ that $\bar\mu_\theta  |  N_{\mathfrak{f}'/\mathfrak{f}}(\check a (\mathfrak{f}'^\times)) =1$.

It remains to show that the condition $\bar\mu_\theta  |  N_{\mathfrak{f}'/\mathfrak{f}}(\check a (\mathfrak{f}'^\times)) =1$ contradicts the regularity of $\mu$.  Choose a weak factorization $(\mu_-,\mu_+)$ of $\mu$ in the sense of Definition \ref{weakfact}.  Then $(\bT, \mu_-)$ is a depth zero, tame, elliptic, regular pair relative to $\bH$.  Fact 3.4.11 \cite{KalYu} implies that the character $\bar\mu_-$ of $\mathsf{T}(\mathfrak{f})$ associated to $\mu_-$ is in general position, in the sense of Definition 5.15 \cite{MR0393266}.  But then Corollary 5.18 \cite{MR0393266} implies that $\bar\mu_-$ is nonsingular in the sense that for every root $b$ of $(\mathsf{H}_x^\circ,\mathsf{T})$, the restriction of $\bar\mu_-$ to $N_{\mathfrak{f}''/\mathfrak{f}}(\check b (\mathfrak{f}''^\times))$ is nontrivial, where $\mathfrak{f}''$ is a field over which $\check b$ is split.  In particular, $\bar\mu_- |N_{\mathfrak{f}'/\mathfrak{f}}(\check a (\mathfrak{f}'^\times)) $ must be nontrivial.  Finally, we observe that the elements of $N_{\mathfrak{f}'/\mathfrak{f}}(\check a (\mathfrak{f}'^\times)) $ lie within the image of $[H,H]$ and hence they are annihilated by $\mu_+$.  It follows that $\bar\mu_\theta$ coincides with $\bar\mu_-$ on $N_{\mathfrak{f}'/\mathfrak{f}}(\check a (\mathfrak{f}'^\times)) $.  So regularity implies $\bar\mu_\theta  |  N_{\mathfrak{f}'/\mathfrak{f}}(\check a (\mathfrak{f}'^\times))$ is nontrivial, which is a contradiction.
\end{proof}

\subsubsection{Orbits of involutions}

Let
$$\xi_\mu = \left\{ \theta\in \xi\ :\ \theta (\mathsf{T}) =\mathsf{T},\ \mu\varepsilon_{\theta}|T^\theta =1\right\}.$$
For each $\theta\in \xi$, we have a bijection
\begin{eqnarray*}
\Xi_{\mathsf{T},\mu}/(TH_{x,0})_\theta &\to& \xi_\mu\\
h(TH_{x,0})_\theta &\mapsto& h\cdot \theta .
\end{eqnarray*}
Here, $$(TH_{x,0})_\theta = \left\{ h\in TH_{x,0}\ :\ h\theta (h)^{-1}\in Z\right\},$$ where $\bZ$ is the center of $\bG$.

Suppose $\theta$ is an involution of $\bG$ that stabilizes $\bT$ and fixes $x$. 

\begin{lemma}
$(TH_{x,0+})_\theta = T_\theta H_{x,0+}^\theta$.
\end{lemma}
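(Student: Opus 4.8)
The claim is an equality of two subgroups of $TH_{x,0}$, namely $(TH_{x,0+})_\theta = T_\theta H_{x,0+}^\theta$, under the hypotheses that $\theta$ stabilizes $\bT$ and fixes $x$. I would prove it by a double inclusion, with the reverse inclusion being essentially formal and the forward inclusion being the substantive part.

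\textbf{The easy inclusion.} First I would observe that $T_\theta H_{x,0+}^\theta \subseteq (TH_{x,0+})_\theta$. If $t \in T_\theta$ then $t\theta(t)^{-1} \in Z$ by definition, and if $k \in H_{x,0+}^\theta$ then $k\theta(k)^{-1} = 1 \in Z$; since $\theta$ stabilizes $\bT$ (hence normalizes $T$ and $H_{x,0+}$) the element $tk\theta(tk)^{-1}$ can be rewritten as $t\,\theta(t)^{-1}\cdot(\theta(t)(k\theta(k)^{-1})\theta(t)^{-1})$, and both factors lie in $Z$ — the first by assumption, the second because it is trivial — and $Z$ is normal, so the product is in $Z$. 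Hence $tk \in (TH_{x,0+})_\theta$.

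\textbf{The main inclusion.} The harder direction is $(TH_{x,0+})_\theta \subseteq T_\theta H_{x,0+}^\theta$. Suppose $h = tk$ with $t \in T$, $k \in H_{x,0+}$, and $h\theta(h)^{-1} \in Z$. The plan is to use the factorization $T H_{x,0+} / H_{x,0+} \cong T/T_{0+}$, which is why $\theta$-stability of $\bT$ matters: modulo $H_{x,0+}$ the element $h$ is just the image of $t$, so the condition $h\theta(h)^{-1} \in Z$ passes to $T/T_{0+}$ and forces $t\theta(t)^{-1} \in Z H_{x,0+} \cap T$-related data. More precisely, I would first reduce to showing that $t$ can be adjusted by an element of $H_{x,0+}$ to lie in $T_\theta$: writing $h\theta(h)^{-1} = z \in Z$, one computes $t\theta(t)^{-1} \cdot \theta(t)(k\theta(k)^{-1})\theta(t)^{-1} = z$, so $t\theta(t)^{-1} = z \cdot \theta(t)(k\theta(k)^{-1})^{-1}\theta(t)^{-1}$, exhibiting $t\theta(t)^{-1}$ as an element of $Z \cdot H_{x,0+}$ that also lies in $T$ (since $t \in T$ and $\theta(\bT) = \bT$). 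Then I would invoke the now-familiar fact (e.g. \cite[Proposition 2.12]{MR2431732}, already used several times above) that allows one to solve $k'\theta(k')^{-1}$ equations inside $H_{x,0+}$: the element $k\theta(k)^{-1}$ of $H_{x,0+}$ can be written as $\ell\theta(\ell)^{-1}$ is not quite what is needed, but rather one wants to replace $k$ itself. The cleanest route is: from $t\theta(t)^{-1} \in ZH_{x,0+}$ and $t\theta(t)^{-1}\in T$, deduce $t\theta(t)^{-1} \in Z(ZH_{x,0+}\cap H$-part), and then use that $Z H_{x,0+}\cap T = Z_0 T_{0+}$-type statement (of the flavor of \cite[Lemma 2.9, Proposition 2.12]{MR2431732} cited earlier) to conclude $t\theta(t)^{-1} \in Z \cdot T_{0+}^{\,}$, actually $\in Z$ after absorbing a $\theta$-fixed factor of $T_{0+}$ — giving $t' := t\cdot(\text{correction}) \in T_\theta$ and correspondingly $k' := (\text{correction})^{-1}k \in H_{x,0+}^\theta$ with $h = t'k'$.

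\textbf{Main obstacle.} The delicate point will be the correction step: extracting from the containment $t\theta(t)^{-1} \in ZH_{x,0+}$ the conclusion that $t$ agrees, modulo $H_{x,0+}^\theta$, with an element of $T_\theta$. This requires knowing that the relevant $\theta$-cohomology vanishes on $H_{x,0+}$ — precisely the content of \cite[Proposition 2.12]{MR2431732}, which lets one absorb any element of $H_{x,0+}$ of the form $g\theta(g)^{-1}$ into a change of variables by an element of $H_{x,0+}^\theta$ — together with the compatibility statement $(ZH_{x,0+})^\theta = Z^\theta H_{x,0+}^\theta$ recorded just above (attributed there to \cite[Lemma 2.9, Proposition 2.12]{MR2431732}), which controls how the central part and the pro-unipotent part interact. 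I expect the whole argument is short once these two cited facts are in hand; the only real care needed is bookkeeping the order in which $t$ and $k$ are conjugated, using $\theta(\bT)=\bT$ to keep everything inside $T$ and $H_{x,0+}$ respectively.
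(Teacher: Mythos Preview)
Your overall strategy --- double inclusion, with the substantive direction handled by a correction coming from \cite[Proposition~2.12]{MR2431732} --- is exactly the paper's. But the execution of the main inclusion is muddled at the crucial point, and as written it does not quite close.

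The issue is where you apply Proposition~2.12. You propose to use the $\theta$-cohomology vanishing on $H_{x,0+}$: given $k\theta(k)^{-1}\in H_{x,0+}$, find $\ell\in H_{x,0+}$ with $\ell\theta(\ell)^{-1}=k\theta(k)^{-1}$. That gives $\ell^{-1}k\in H_{x,0+}^\theta$, but then $t\ell$ need not lie in $T$, so you have not produced a factorization into $T_\theta\cdot H_{x,0+}^\theta$. The correction element must live in $T\cap H_{x,0+}=T_{0+}$ so that it can be shuffled from the $k$-factor to the $t$-factor. You gesture at this (``$ZH_{x,0+}\cap T = Z_0T_{0+}$-type statement'', ``$t\theta(t)^{-1}\in Z\cdot T_{0+}$''), but then slip back to $H_{x,0+}$ in the ``Main obstacle'' paragraph, and the phrase ``absorbing a $\theta$-fixed factor of $T_{0+}$'' is not what is needed (the correction $u$ is not $\theta$-fixed).

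The clean version, which is what the paper does, is a one-line observation followed by one application of Proposition~2.12 \emph{inside $T_{0+}$}. From $z=tk\theta(k)^{-1}\theta(t)^{-1}\in Z$ you get
\[
y:=k\theta(k)^{-1}=z\,t^{-1}\theta(t)\in T,
\]
and since also $y\in H_{x,0+}$, in fact $y\in T_{0+}$; moreover $\theta(y)=y^{-1}$. Now apply Proposition~2.12 to the pro-$p$ group $T_{0+}$ to get $u\in T_{0+}$ with $y=u\theta(u)^{-1}$. Then $t':=tu\in T$ satisfies $t'\theta(t')^{-1}=t\,y\,\theta(t)^{-1}=z\in Z$, so $t'\in T_\theta$; and $k':=u^{-1}k\in H_{x,0+}$ satisfies $k'\theta(k')^{-1}=u^{-1}y\,\theta(u)=1$, so $k'\in H_{x,0+}^\theta$. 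Thus $tk=t'k'\in T_\theta H_{x,0+}^\theta$. No appeal to $(ZH_{x,0+})^\theta=Z^\theta H_{x,0+}^\theta$ is required.
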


\begin{proof}
Clearly, $(TH_{x,0+})_\theta \supset T_\theta H_{x,0+}^\theta$.

Now suppose $t\in T$, $h\in H_{x,0+}$ and $th\in (TH_{x,0+})_\theta$.  Then the element $z = th\theta (h)^{-1}\theta (t)^{-1}$ lies in $Z$.  The element $y= h\theta (h)^{-1} = zt^{-1}\theta (t)$ lies $T_{0+}$ and satisfies $\theta (y) = y^{-1}$.  Proposition 2.12 \cite{MR2431732} implies that there exists $u\in T_{0+}$ such that $ y = u\theta (u)^{-1}$.
Let $t' = tu\in T$ and $h' = u^{-1}h\in H_{x,0+}$.

Then $th = t'h'$, where $t\in T_\theta$ and $h'\in H_{x,0+}^\theta$.
This implies $(TH_{x,0+})_\theta \subset T_\theta H_{x,0+}^\theta$.
\end{proof}

The previous lemma implies that the constants
$${\rm m}_T^{TH_{x,0}} (\theta )
= [(TH_{x,0})_\theta : (TH_{x,0})^\theta T_\theta ]$$ and
$${\rm m}_{TH_{x,0+}}^{TH_{x,0}} (\theta )
= [(TH_{x,0})_\theta : (TH_{x,0})^\theta (TH_{x,0+})_\theta ]$$ are identical.

\begin{proposition}\label{maincoeff}
$$\left\langle \xi ,\rho_T^\mu\right\rangle^{\varepsilon^+}_{TH_{x,0}} 
= 
\sum_{\zeta\in T\bs \xi} {\rm m}_T^{TH_{x,0}} (\zeta) 
\left\langle \zeta , \mu\right\rangle^{\varepsilon}_{T},$$
where
\begin{eqnarray*}
\left\langle \zeta ,\mu\right\rangle^{\varepsilon}_{T} 
&=&\dim {\rm Hom}_{T^\theta}(\mu, \varepsilon_\theta)\\
&=&\begin{cases}1,&\text{if }
\mu\varepsilon_\theta |T^\theta =1,\\
0,&\text{otherwise,}
\end{cases}
\end{eqnarray*}
for some (hence all)
$\theta\in \zeta$.
\end{proposition}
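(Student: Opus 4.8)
The plan is to reduce the identity to the double-coset count of Proposition \ref{Luszt} and then evaluate that count directly. First I would set up and match the two sides. By Fact \ref{secondfact} the left-hand side is independent of the chosen $\theta\in\xi$, so I may take $\theta$ to stabilize $\bT$ and fix $x$; if no element of $\xi$ with these properties lies in $\xi_\mu$, then $\Xi_{\mathsf{T},\mu}=\varnothing$, so Proposition \ref{Luszt} gives left-hand side $=0$ while every term on the right vanishes, and we are done. So assume henceforth $\xi_\mu\neq\varnothing$ and fix $\theta\in\xi_\mu$. On the right-hand side, $\langle\zeta,\mu\rangle^\varepsilon_T=1$ precisely when $\zeta\subseteq\xi_\mu$ (the conditions cutting out $\xi_\mu$ are invariant under the abelian group $T$, so $\xi_\mu$ is a union of $T$-orbits), and by the lemma just preceding the proposition ${\rm m}_T^{TH_{x,0}}(\zeta)={\rm m}_{TH_{x,0+}}^{TH_{x,0}}(\zeta)$; hence by Proposition \ref{Luszt} it suffices to prove
$$\#\bigl((TH_{x,0+})\bs\Xi_{\mathsf{T},\mu}/(TH_{x,0})^\theta\bigr)=\sum_{\zeta\in T\bs\xi_\mu}{\rm m}_T^{TH_{x,0}}(\zeta).$$

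Next I would identify the right-hand side with the count $\#\bigl(T\bs\Xi_{\mathsf{T},\mu}/(TH_{x,0})^\theta\bigr)$, using the bijection $h(TH_{x,0})_\theta\mapsto h\cdot\theta$ from $\Xi_{\mathsf{T},\mu}/(TH_{x,0})_\theta$ onto $\xi_\mu$. Concretely, choose $T$-orbit representatives $\theta'_i\in\xi_\mu$ with $\theta'_i=h_i\cdot\theta$; then $\Xi_{\mathsf{T},\mu}$ is the disjoint union of the right $(TH_{x,0})_\theta$-cosets $th_i(TH_{x,0})_\theta$, indexed by $i$ and by $tT_{\theta'_i}\in T/T_{\theta'_i}$. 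After dividing by $(TH_{x,0})^\theta$ on the right and by $T$ on the left, $T$ acts simply transitively on the cosets attached to a fixed $i$, so the $i$-th block contributes the number of orbits of $T_{\theta'_i}$ acting by left translation on $(TH_{x,0})_\theta/(TH_{x,0})^\theta$; since this group embeds (via $h\mapsto h\theta(h)^{-1}$) into the abelian group $\bG_\theta/\bG^\theta$, that number is $[(TH_{x,0})_\theta:(h_i^{-1}T_{\theta'_i}h_i)(TH_{x,0})^\theta]$, which equals ${\rm m}_T^{TH_{x,0}}(\theta'_i)$ by the identity ${\rm m}_K^H(g\cdot\theta)={\rm m}_{g^{-1}Kg}^{g^{-1}Hg}(\theta)$ applied with $g=h_i$ (using that $TH_{x,0}$ normalizes itself). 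Summing over $i$ gives $\#\bigl(T\bs\Xi_{\mathsf{T},\mu}/(TH_{x,0})^\theta\bigr)=\sum_{\zeta\in T\bs\xi_\mu}{\rm m}_T^{TH_{x,0}}(\zeta)$.

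It then remains to show that enlarging the left-acting group from $T$ to $TH_{x,0+}$ merges no double cosets of $\Xi_{\mathsf{T},\mu}$, i.e.\ that the evident surjection $T\bs\Xi_{\mathsf{T},\mu}/(TH_{x,0})^\theta\to (TH_{x,0+})\bs\Xi_{\mathsf{T},\mu}/(TH_{x,0})^\theta$ is a bijection; this is the main point. Unwinding, it amounts to: if $h,h'\in\Xi_{\mathsf{T},\mu}$ and $h'=shr$ with $s\in H_{x,0+}$ and $r\in R:=(TH_{x,0})^\theta$ (the $T$-part of the left factor may be absorbed), then $h'\in ThR$; and this follows once one knows $s\in T\,(TH_{x,0})^{h\cdot\theta}$, because $(TH_{x,0})^{h\cdot\theta}=h R h^{-1}$. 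To get the latter, put $\psi=h\cdot\theta$ (an involution stabilizing $\bT$ and fixing $x$, since $h\cdot\theta\in\xi_\mu$) and $y=\psi(s)s^{-1}\in H_{x,0+}$. The hypothesis that $h'\cdot\theta=s\cdot\psi$ again stabilizes $\bT$ forces $y\in N_\bH(\bT)\cap H_{x,0+}$; since an element of the pro-$p$ group $H_{x,0+}$ normalizing $\bT$ must centralize it, in fact $y\in T_{0+}$, and clearly $\psi(y)=y^{-1}$. Then Proposition 2.12 of \cite{MR2431732} produces $t\in T_{0+}$ with $\psi(t)t^{-1}=y$, whence $t^{-1}s\in(TH_{x,0})^\psi$ and $s=t\,(t^{-1}s)\in T\,(TH_{x,0})^\psi$, as required.

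I expect this last step — the passage from $T$ to $TH_{x,0+}$ — to be the crux: it is the only place where the non-archimedean structure (through Proposition 2.12 of \cite{MR2431732}) enters, and it is exactly what makes the formula depend only on $T$-orbits of involutions rather than on $TH_{x,0}$-orbits. An alternative packaging of the same content is to reduce at once to the residue-field group $\mathsf{H}_x^\circ(\mathfrak{f})$, where $TH_{x,0+}$ becomes $\mathsf{T}(\mathfrak{f})$ and $(TH_{x,0})^\theta$ becomes $\mathsf{H}_x^\circ(\mathfrak{f})^\theta$, turning the count into a Lusztig-type orbit computation of the sort underlying \S\ref{sec:Lusztigformula}; the pro-$p$ input above is precisely what makes that reduction faithful.
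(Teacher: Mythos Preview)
Your approach is correct and is essentially the paper's own argument, repackaged. Both start from Proposition~\ref{Luszt}, pass through the bijection $\Xi_{\mathsf{T},\mu}/(TH_{x,0})_\theta\cong\xi_\mu$, and exploit the abelianness of $(TH_{x,0})_\theta/(TH_{x,0})^\theta$ together with the identity ${\rm m}_T^{TH_{x,0}}={\rm m}_{TH_{x,0+}}^{TH_{x,0}}$ from the preceding lemma. The paper organizes the count by fibering $(TH_{x,0+})\bs\Xi_{\mathsf{T},\mu}/(TH_{x,0})^\theta$ over $(TH_{x,0+})\bs\Xi_{\mathsf{T},\mu}/(TH_{x,0})_\theta$, asserting (without argument) that the latter is in bijection with $T\bs\xi_\mu$, and showing each fiber has size ${\rm m}_{TH_{x,0+}}^{TH_{x,0}}(h\cdot\theta)$. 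You instead compute $\#\bigl(T\bs\Xi_{\mathsf{T},\mu}/(TH_{x,0})^\theta\bigr)$ directly and then show that enlarging $T$ to $TH_{x,0+}$ merges nothing. Your third paragraph thus supplies precisely the justification the paper leaves implicit in its asserted bijection.

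One caution on that paragraph: the claim that ``an element of the pro-$p$ group $H_{x,0+}$ normalizing $\bT$ must centralize it'' does not follow from pro-$p$-ness alone, since $N_H(\bT)/T$ can have $p$-torsion under the paper's hypotheses on $p$ (e.g.\ already for $\bH=\GL_n$ with $n\ge p$). What you actually need is $N_H(\bT)\cap H_{x,0+}=T_{0+}$; this holds here because $\bT$ is maximally unramified and elliptic in $\bH$, so that the rational Weyl group is detected in the reductive quotient $\mathsf{H}_x^\circ(\mathfrak{f})$, where $H_{x,0+}$ has trivial image. With that input in place your argument goes through, and your invocation of Proposition~2.12 of \cite{MR2431732} is exactly the mechanism the paper uses in the lemma immediately preceding the proposition.
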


\begin{proof}
Let $$R =  \left\{ h\cdot \theta\in \xi \ : h\in \Xi_{\mathsf{T},\mu} \right\}.$$
These are the relevant involutions in $\xi$ since these involutions make a nonzero contribution to $\left\langle \xi ,\rho_T^\mu\right\rangle^{\varepsilon^+}_{TH_{x,0}} $.
The exact contribution a given involution $h\cdot \theta\in R$ makes is the number of double cosets in $(TH_{x,0+})\bs \Xi_{\mathsf{T},\mu}/  (TH_{x,0})^\theta $ associated to $h\cdot \theta$.

We have a bijection
\begin{eqnarray*}
\Xi_{\mathsf{T},\mu}/(TH_{x,0})_\theta &\to& R\\
h(TH_{x,0})_\theta &\mapsto & h\cdot \theta 
\end{eqnarray*}
that yields a bijection
$$(TH_{x,0+})\bs \Xi_{\mathsf{T},\mu}/(TH_{x,0})_\theta \to T\bs R,$$ where $T\bs R$ denotes the set of $T$-orbits of involutions in $R$.

Suppose $h\in \Xi_{\mathsf{T},\mu}$.  As $k$ varies over $(TH_{x,0})_\theta$,  the double cosets
$$(TH_{x,0+}) hk(TH_{x,0})^\theta$$ are precisely the double cosets associated to the involution $h\cdot \theta$.
We need to determine the number $N_h$ of such double cosets.

We observe that the condition
$$(TH_{x,0+}) hk_1(TH_{x,0})^\theta =(TH_{x,0+}) hk_2(TH_{x,0})^\theta$$ is equivalent to
$$(h^{-1}TH_{x,0+}h)_\theta k_1(TH_{x,0})^\theta =(h^{-1}TH_{x,0+}h)_\theta k_2(TH_{x,0})^\theta . $$
Next, we observe that $g\mapsto g\theta (g)^{-1}$ identifies
$(TH_{x,0})_\theta / (TH_{x,0})^\theta$ a subgroup of $Z$.  Hence $\mathscr{A} = (TH_{x,0})_\theta / (TH_{x,0})^\theta$ is an abelian group.   Let $\mathscr{B}$ be the image of $(h^{-1}TH_{x,0+}h)_\theta$ in $\mathscr{A}$.   Then
$$N_h = [\mathscr{A}:\mathscr{B}] =
[h\mathscr{A} h^{-1} : h\mathscr{B}h^{-1} ] = {\rm m}_{TH_{x,0+}}^{TH_{x,0}}(h\cdot \theta) ={\rm m}_T^{TH_{x,0}} (h\cdot\theta ).$$
Our assertion follows.
\end{proof}

\subsection{The main theorem}

In the introduction, we stated our main result:

\begin{theorem}\label{mainthm}
If $(\bT,\mu)$ is a tame, elliptic, regular pair  relative to the connected, reductive $F$-group $\bG$ and if $\Theta$ is a $G$-orbit of involutions of $G$ then $$\langle \Theta , \pi (\mu)\rangle_G^1 =\sum_{\zeta\in T\bs \Theta_{\bT}} {\rm m}_T^G(\zeta)\ \langle \zeta ,\mu \rangle_T^\varepsilon.$$
\end{theorem}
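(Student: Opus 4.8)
The plan is to assemble the theorem by chaining together the four ``induction-like'' formulas that have been set up in the preceding sections, so that the proof is essentially a bookkeeping argument about how the weights $\mathrm{m}$ and the compatible families $\varepsilon$ compose. The three building blocks are: (i) Proposition \ref{maincoeff}, which computes $\langle\xi,\rho_T^\mu\rangle^{\varepsilon^+}_{TH_{x,0}}$ as a sum over $T$-orbits $\zeta\subset\xi$ of $\mathrm{m}_T^{TH_{x,0}}(\zeta)\langle\zeta,\mu\rangle_T^\varepsilon$; (ii) the standard Mackey formula of \S\ref{sec:standardMackey} applied to $\rho=\mathrm{Ind}_{TH_{x,0}}^{H_x}(\rho_T^\mu)$, which gives $\langle\vartheta,\rho\rangle_{H_x}^{\varepsilon^+}=\sum_{\xi\subset\vartheta}\mathrm{m}_{TH_{x,0}}^{H_x}(\xi)\langle\xi,\rho_T^\mu\rangle^{\varepsilon^+}_{TH_{x,0}}$; and (iii) the main result of \cite{ANewHM}, which gives $\langle\Theta,\pi(\rho)\rangle_G^1=\sum_{\vartheta\subset\Theta}\mathrm{m}_{H_x}^G(\vartheta)\langle\vartheta,\rho\rangle_{H_x}^{\varepsilon^+}$, together with the identification $\pi(\rho)=\pi(\mu)$ from \cite{ANewTasho}.

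\textbf{Key steps.} First I would substitute (i) into (ii) and then (ii) into (iii), obtaining a triple sum over $\vartheta\subset\Theta$, then $\xi\subset\vartheta$, then $\zeta\subset\xi$ (all orbits of the respective groups, with $\zeta$ required to be a $T$-orbit of involutions stabilizing $\bT$). The coefficient attached to a given $\zeta$ is then a product $\mathrm{m}_{H_x}^G(\vartheta)\,\mathrm{m}_{TH_{x,0}}^{H_x}(\xi)\,\mathrm{m}_T^{TH_{x,0}}(\zeta)$, and the summand's arithmetic content $\langle\zeta,\mu\rangle_T^\varepsilon$ depends only on $\zeta$. The transitivity Lemma \ref{transofind} (in the aggregated form $\mathrm{m}_{TH_{x,0}}^{H_x}(\xi)\,\mathrm{m}_T^{TH_{x,0}}(\zeta)=\mathrm{m}_T^{H_x}(\zeta)$ when $\zeta\subset\xi$, and again $\mathrm{m}_{H_x}^G(\vartheta)\,\mathrm{m}_T^{H_x}(\zeta)=\mathrm{m}_T^G(\zeta)$ when $\zeta\subset\vartheta$) collapses the coefficient to $\mathrm{m}_T^G(\zeta)$. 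One must also check that the well-definedness (constancy of $\mathrm{m}$ over an orbit, Corollary after Lemma on $\mathrm{m}_K^H(g\cdot\theta)$) lets us pass freely between ``choose $\theta\in\xi$'' and the intrinsic orbit notation, and that a compatible family on $H_x$ restricts to one on $TH_{x,0}$ and on $T$, so that the pairings $\langle\ ,\ \rangle^{\varepsilon^+}$ and $\langle\ ,\ \rangle^{\varepsilon}$ are the intended ones — here $\varepsilon=\varepsilon^-\varepsilon^+$ restricted to $T^\theta$, exactly as in \S\ref{sec:epsi}.

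\textbf{The bookkeeping to be careful about.} The real work is the reindexing of the iterated sum. After the substitutions, the outer index set is the set of $T$-orbits $\zeta$ of involutions in $\Theta$ stabilizing $\bT$, i.e.\ $T\bs\Theta_\bT$, but each such $\zeta$ is reached via a choice of $\xi\supset\zeta$ ($TH_{x,0}$-orbit) and $\vartheta\supset\xi$ ($H_x$-orbit, stabilizing $H_x$). I need to argue that every $\zeta\in T\bs\Theta_\bT$ which contributes (i.e.\ with $\mu\varepsilon_\theta|T^\theta=1$) is reached by exactly one such pair $(\xi,\vartheta)$ after the multiplicities are accounted for — equivalently, that summing $\mathrm{m}_{H_x}^G(\vartheta)\mathrm{m}_{TH_{x,0}}^{H_x}(\xi)$ over the $\xi\subset\vartheta$ containing a fixed $\zeta$ and the $\vartheta$ containing that $\xi$ telescopes, via the transitivity identities and the orbit-fiber count of the Lemma on $\mathscr{F}_h^K$, to $\mathrm{m}_T^G(\zeta)$ exactly once. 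This telescoping of the multiplicities through the tower $T\subset TH_{x,0}\subset H_x\subset G$ is the main obstacle; once it is in hand, the formula $\langle\Theta,\pi(\mu)\rangle_G^1=\sum_{\zeta\in T\bs\Theta_\bT}\mathrm{m}_T^G(\zeta)\langle\zeta,\mu\rangle_T^\varepsilon$ falls out, and one should note that orbits $\zeta$ not stabilizing $\bT$, or not meeting $\Theta_\bT$, never appear because Proposition \ref{maincoeff} already restricts to $\zeta\in T\bs\xi_\bT$.
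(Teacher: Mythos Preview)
Your proposal is correct and follows essentially the same route as the paper: chain the formula from \cite{ANewHM}, the Mackey formula for $\rho=\mathrm{ind}_{TH_{x,0}}^{H_x}(\rho_T^\mu)$, and Proposition~\ref{maincoeff}, then collapse the product of weights via Lemma~\ref{transofind}. The only remark is that the ``bookkeeping obstacle'' you flag is simpler than you suggest: a given $T$-orbit $\zeta$ lies in a \emph{unique} $TH_{x,0}$-orbit $\xi$ (namely $TH_{x,0}\cdot\zeta$), which in turn lies in a unique $H_x$-orbit $\vartheta$, so there is no sum over pairs $(\xi,\vartheta)$ to telescope---transitivity applies to a single product and the reindexing is immediate.
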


\begin{proof} At this point,  we have developed all of the details needed for the proof.  It only remains to recapitulate what we have done and fit the various pieces into the strategy outlined in the introduction.

The first step  is to state the main result of \cite{ANewHM} using the notations from \S\ref{sec:indweights} and \S \ref{sec:compatsec} in this paper:
\begin{equation}\langle \Theta,\pi(\mu)\rangle_G^1 = \sum_{\vartheta\in H_x\bs \Theta_{H_x}} {\rm m}^G_{H_x}(\vartheta)\, \langle \vartheta , \rho\rangle_{H_x}^{\varepsilon^+}.\end{equation}
The (permissible) representation $\rho$ is the induced representation 
$$\rho = {\rm ind}_{TH_{x,0}}^{H_x} (\rho_T^\mu),$$
where $\rho_T^\mu$ is defined in \cite[\S2.4--2.6]{ANewTasho}.
According to the theory in \S\ref{sec:standardMackey}, we have
\begin{equation}\langle \vartheta , \rho\rangle_{H_x}^{\varepsilon^+} = \sum_{\xi \in (TH_{x,0})\bs \vartheta} {\rm m}_{TH_{x,0}}^{H_x}(\xi)\ \langle \xi ,\rho_T^\mu\rangle^{\varepsilon^+}_{TH_{x,0}} .\end{equation} 
(For simplicity, the theory in \S\ref{sec:standardMackey} was developed in the context of finite groups, but Mackey's formula
also holds for smooth representations of $p$-adic groups and the ideas in \S\ref{sec:standardMackey} can easily be adapted to this setting.)

Next, we use Lemma \ref{transofind} and Equations (1) and (2) to deduce:
\begin{equation}\langle \Theta,\pi(\mu)\rangle_G^1 =  \sum_{\xi \in (TH_{x,0})\bs \Theta_{TH_{x,0}}} {\rm m}_{TH_{x,0}}^{G}(\xi)\ \langle \xi ,\rho_T^\mu\rangle^{\varepsilon^+}_{TH_{x,0}}.\end{equation}
Then  we apply Proposition \ref{maincoeff}, which says
\begin{equation}\left\langle \xi ,\rho_T^\mu\right\rangle^{\varepsilon^+}_{TH_{x,0}} 
= 
\sum_{\zeta\in T\bs \xi_{\bT}} {\rm m}_T^{TH_{x,0}} (\zeta) 
\left\langle \zeta , \mu\right\rangle^{\varepsilon}_{T}.\end{equation}
 Combining Equations (3) and (4) and applying Lemma \ref{transofind} now finishes the proof.\end{proof}

\bibliographystyle{amsalpha}
\bibliography{JLHrefs}

\def\cprime{$'$} \def\cprime{$'$} \def\cprime{$'$} \def\cprime{$'$}
  \def\cprime{$'$} \def\cftil#1{\ifmmode\setbox7\hbox{$\accent"5E#1$}\else
  \setbox7\hbox{\accent"5E#1}\penalty 10000\relax\fi\raise 1\ht7
  \hbox{\lower1.15ex\hbox to 1\wd7{\hss\accent"7E\hss}}\penalty 10000
  \hskip-1\wd7\penalty 10000\box7}
  \def\cftil#1{\ifmmode\setbox7\hbox{$\accent"5E#1$}\else
  \setbox7\hbox{\accent"5E#1}\penalty 10000\relax\fi\raise 1\ht7
  \hbox{\lower1.15ex\hbox to 1\wd7{\hss\accent"7E\hss}}\penalty 10000
  \hskip-1\wd7\penalty 10000\box7}
\providecommand{\bysame}{\leavevmode\hbox to3em{\hrulefill}\thinspace}
\providecommand{\MR}{\relax\ifhmode\unskip\space\fi MR }
\providecommand{\MRhref}[2]{%
  \href{http://www.ams.org/mathscinet-getitem?mr=#1}{#2}
}
\providecommand{\href}[2]{#2}
\begin{thebibliography}{Hak18b}

\bibitem[DL76]{MR0393266}
P.~Deligne and G.~Lusztig, \emph{Representations of reductive groups over
  finite fields}, Ann. of Math. (2) \textbf{103} (1976), no.~1, 103--161.
  \MR{0393266 (52 \#14076)}

\bibitem[DM91]{MR1118841}
Fran\c{c}ois Digne and Jean Michel, \emph{Representations of finite groups of
  {L}ie type}, London Mathematical Society Student Texts, vol.~21, Cambridge
  University Press, Cambridge, 1991. \MR{1118841}

\bibitem[Hak13]{MR3027804}
Jeffrey Hakim, \emph{Tame supercuspidal representations of {${\rm GL}_n$}
  distinguished by orthogonal involutions}, Represent. Theory \textbf{17}
  (2013), 120--175. \MR{3027804}

\bibitem[Hak17]{ANewHM}
\bysame, \emph{Distinguished cuspidal representations over $p$-adic and finite
  fields}, preprint, {\tt https://arxiv.org/abs/1703.08861}, (2017).

\bibitem[Hak18a]{ANewYu}
\bysame, \emph{Constructing tame supercuspidal representations}, Represent.
  Theory \textbf{22} (2018), 45--86. \MR{3817964}

\bibitem[Hak18b]{ANewTasho}
\bysame, \emph{The construction of regular supercuspidal representations},
  preprint, (2018).

\bibitem[HL12]{MR2925798}
Jeffrey Hakim and Joshua Lansky, \emph{Distinguished tame supercuspidal
  representations and odd orthogonal periods}, Represent. Theory \textbf{16}
  (2012), 276--316. \MR{2925798}

\bibitem[HM99]{MR1674664}
Jeffrey Hakim and Zhengyu Mao, \emph{Cuspidal representations associated to
  {$({\rm GL}(n),{\rm O}(n))$} over finite fields and {$p$}-adic fields}, J.
  Algebra \textbf{213} (1999), no.~1, 129--143. \MR{1674664 (99m:22019)}

\bibitem[HM08]{MR2431732}
Jeffrey Hakim and Fiona Murnaghan, \emph{Distinguished tame supercuspidal
  representations}, Int. Math. Res. Pap. IMRP (2008), no.~2, Art. ID rpn005,
  166. \MR{2431732 (2010a:22022)}

\bibitem[HW93]{MR1215304}
A.~G. Helminck and S.~P. Wang, \emph{On rationality properties of involutions
  of reductive groups}, Adv. Math. \textbf{99} (1993), no.~1, 26--96.
  \MR{1215304 (94d:20051)}

\bibitem[Kal17]{KalYu}
Tasho Kaletha, \emph{Regular supercuspidal representations}, preprint, v2, {\tt
  https://arxiv.org/abs/1602.03144}, (2017).

\bibitem[Lus90]{MR1106911}
George Lusztig, \emph{Symmetric spaces over a finite field}, The {G}rothendieck
  {F}estschrift, {V}ol.\ {III}, Progr. Math., vol.~88, Birkh\"auser Boston,
  Boston, MA, 1990, pp.~57--81. \MR{1106911 (92e:20034)}

\bibitem[Spi08]{MR2408311}
Loren Spice, \emph{Topological {J}ordan decompositions}, J. Algebra
  \textbf{319} (2008), no.~8, 3141--3163. \MR{2408311}

\bibitem[Yu01]{MR1824988}
Jiu-Kang Yu, \emph{Construction of tame supercuspidal representations}, J.
  Amer. Math. Soc. \textbf{14} (2001), no.~3, 579--622 (electronic).
  \MR{1824988 (2002f:22033)}

\bibitem[Zha17]{CZdist}
Chong Zhang, \emph{Distinguished regular supercuspidal representations},
  preprint (2017).

\end{thebibliography}

\end{document}